\documentclass[12pt]{amsart}
\usepackage[cp1250]{inputenc}
\usepackage[T1]{fontenc}
\usepackage{amscd}

\usepackage{amsmath}
\usepackage{amsthm}
\usepackage{amssymb}
\usepackage{amscd}
\usepackage{graphicx}
\usepackage{epsfig}
\usepackage{bbm}

\theoremstyle{plain}
\newtheorem{thm}{Theorem}[section]
\newtheorem{lem}[thm]{Lemma}
\newtheorem{prop}[thm]{Proposition}
\newtheorem{cor}[thm]{Corollary}

\theoremstyle{definition}
\newtheorem{dff}[thm]{Definition}
\newtheorem{rem}[thm]{Remark}
\newtheorem{exa}[thm]{Example}

\overfullrule=0pt \textwidth=139.2mm \oddsidemargin=10mm
\evensidemargin=10mm

\numberwithin{equation}{section}
\pagestyle{plain}

\def\B{\mathcal{B}}

\def\D{\mathcal{D}}

\def\H{\mathcal{H}}

\def\P{\mathcal{P}}

\def\U{\mathcal{U}}
\def\V{\mathcal{V}}

\def\Q{\mathcal{Q}}

\def\phi{\varphi}
\def\p{\partial}

\def\hr{\H_c(\p\t\R_+)}
\def\hrn{\H_c(\R^n)}

\def\rz{\mathbb{R}}
\def\R{\rz}

\def\N{\mathbb{N}}

\def\wyr#1{\textit{#1}}
\def\Z{\mathbb{Z}}
\def\s{\subset}
\def\hm{\H_c(M)}
\def\hdm{\H_c^{\p}(M)}

\def\t{\times}
\def\r{\rightarrow}

\def\ld{,\ldots,}
\def\pm{\p_M}

 \DeclareMathOperator{\diff}{Diff}

 \DeclareMathOperator{\cl}{cl}
  
  \DeclareMathOperator{\conj}{conj}
 
 \DeclareMathOperator{\frag}{frag}
 
\DeclareMathOperator{\id}{id} \DeclareMathOperator{\jac}{Jac}
 \DeclareMathOperator{\supp}{supp}

\keywords{Group of homeomorphisms, universal covering group,
perfect group, bounded group,
  fragmentation, isotopy. } \subjclass{58D05,  57S05}
\thanks{Partially supported by the Polish Ministry of Science and Higher Education and the
AGH grant n. 11.420.04}

\address{Faculty of Applied Mathematics, AGH University of Science and
\linebreak Technology, al. Mickiewicza 30, 30-059 Krak\'ow,
Poland} \email{kowalik@wms.mat.agh.edu.pl,tomasz@uci.agh.edu.pl}
\date{March 18, 2011}

\title{On the  homeomorphism  groups of manifolds and their universal coverings }
\author{Agnieszka Kowalik,  Tomasz Rybicki}

\begin{document}

\maketitle

\begin{abstract}
Let $\H_c(M)$  stand for the path connected identity component of
the group of all compactly supported homeomorphisms of a manifold
$M$. It is shown that $\H_c(M)$ is perfect and simple under mild
assumptions on $M$.   Next, conjugation-invariant norms on
$\H_c(M)$ are considered and the boundedness of $\H_c(M)$ and its
subgroups is investigated.
 Finally, the structure of the universal covering group of
$\H_c(M)$ is studied.
\end{abstract}

\section{Introduction}

Let $M$ be a topological metrizable manifold of dimension $n\geq
1$, possibly with boundary, and let $\H(M)$ (resp. $\H_c(M)$) be
the path connected identity component of the group of all (resp.
compactly supported) homeomorphisms of a manifold $M$ endowed with
the compact-open topology. In this paper we will deal with
algebraic properties of the group $\H_c(M)$ and  of its universal
covering.

 Recall that a group $G$ is
called \emph{perfect} if it is equal to its own commutator
subgroup $[G,G]$. That is, $H_1(G)=0$. The following basic fact is
probably well-known but we have not found it explicitly proven in
the literature.

\begin{thm} Assume that either $M$ is compact (possibly with
boundary), or $M$  admits a compact exhaustion, i.e. there is a
sequence of compact submanifolds with boundary
$(M_i)_{i=1}^{\infty}$ with $\dim M_i=\dim M=n$ such that $M_1\s
M^o_2\s M_2\s\ldots$ and $M=\bigcup_{i=1}^{\infty}M_i$. Then the
group $\H_c(M)$ is perfect.
\end{thm}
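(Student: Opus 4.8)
The plan is to reduce the statement to two independent facts: a \emph{fragmentation} property, expressing an arbitrary $f\in\hm$ as a finite product of homeomorphisms each supported in a chart homeomorphic to $\R^n$, and the fact that every compactly supported homeomorphism of $\R^n$ supported in a ball is a commutator in $\hrn$. Granting these, if $f=g_1\cdots g_k$ with each $g_j$ supported in a ball and hence a commutator in $\hm$, then $f\in[\hm,\hm]$, which is exactly the assertion.

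First I would establish fragmentation. Fix $f\in\hm$ together with a path $(f_t)_{t\in[0,1]}$ from $\id$ to $f$ inside $\hm$, and a locally finite cover of a neighbourhood of $\supp f$ by charts $U_1\ld U_k\s M$, each homeomorphic to $\R^n$. In the compact case one subdivides $[0,1]$ into intervals on which $f_t$ moves points only a small amount, and then localizes each small step to a single chart. The tool that makes this work in the purely topological category is the Edwards--Kirby deformation theorem (local contractibility of $\HH(M)$), which allows a small isotopy to be modified so as to be supported in a prescribed chart while agreeing with the original one on a smaller set. In the non-compact case one first uses the compact exhaustion $(M_i)$ to write $f$ as a product of homeomorphisms supported in the successive ``collars'' $\cl(M_{i+1}\setminus M_{i-1})$, reducing to finitely many localized problems, each treated as in the compact case; the locally finite structure guarantees the resulting product is finite on any compact set.

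Next I would prove that a homeomorphism $h\in\hrn$ supported in a ball $U$ is a single commutator, via the classical infinite-iteration trick. Choose $\phi\in\hrn$ contracting $U$ toward an interior fixed point $p$ so that $\phi(U)\cap\supp h=\emptyset$; then the sets $\phi^i(\supp h)$, $i\geq0$, are pairwise disjoint and shrink to $p$, so $g:=\prod_{i=0}^{\infty}\phi^i h\phi^{-i}$ defines an element of $\hrn$ (continuity at $p$ follows from the shrinking of the supports). From the identity $g=h\cdot\phi g\phi^{-1}$ one obtains $h=g\phi g^{-1}\phi^{-1}=[g,\phi]$, a commutator in $\hrn$; transporting $U$ to a chart of $M$ realizes $h$ as a commutator in $\hm$ as well.

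The main obstacle is the fragmentation step. Unlike the smooth category, where one simply cuts an isotopy by a partition of unity, fragmentation of homeomorphisms is genuinely delicate and rests on the deep local contractibility results for $\HH(M)$; care is also needed to keep every factor compactly supported and inside the path-component $\hm$, and to control the product over the exhaustion so that it remains locally finite. Once fragmentation is in hand, the commutator trick is elementary and the conclusion follows at once.
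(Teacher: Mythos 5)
Your argument is, in substance, the paper's own proof of the boundaryless case: Edwards--Kirby fragmentation followed by Mather's infinite-iteration commutator trick (the paper's Lemma 3.1). However, the theorem explicitly allows $\partial M\neq\emptyset$ (both for compact $M$ and in the exhaustion case, where $M$ itself may have boundary), and your proof does not cover this. The first problem is in your fragmentation step: an $f\in\H_c(M)$ may move points of $\partial M$, and then no neighbourhood of $\supp f$ can be covered by charts homeomorphic to $\R^n$; charts at boundary points are homeomorphic to the half-space $\R^n_+$, so fragmentation inevitably produces factors supported in half-balls as well as balls. For such factors your displacement trick needs a boundary-preserving $\phi$ carrying a half-ball onto infinitely many pairwise disjoint half-balls accumulating at a point; this exists when $\dim M\geq 2$ (one slides along the boundary --- this is the paper's Corollary 3.2(1)), but it is a genuine modification that has to be stated and justified, not an instance of your $\R^n$ lemma.

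The second, more serious problem is $n=1$ with boundary, e.g.\ $M=[0,1]$ or $\R_+$. Here the trick fails irreparably: a half-ball is an interval containing a boundary point, and a $1$-manifold has at most two boundary points, so there is no infinite disjoint family of half-balls and hence no displacing $\phi$. The paper devotes a separate argument to exactly this case (merged with the proof of its Theorem 1.2): using the isotopy extension theorem one builds a homeomorphism $h$ that agrees with the given $f$ on infinitely many disjoint closed intervals accumulating at the boundary point and is supported in slightly larger disjoint \emph{interior} intervals, so that both $h$ and $h'=h^{-1}f$ decompose as infinite products of homeomorphisms supported in disjoint interior intervals; Mather's trick is then applied simultaneously to all factors, exhibiting each of $h$ and $h'$ as a commutator. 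Without this step (or some substitute for it), your proof establishes the theorem only when $\partial M=\emptyset$; the cases with boundary, which the statement includes, remain open in your write-up.
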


The proof of the perfectness is a consequence of Mather's paper
\cite{Mat71} combined with Edwards and Kirby \cite{ed-ki},
Corollary 1.3. In the case $n=1$ and $M$ with boundary the proof
requires an additional argument. See section 3. A special case of
Theorem 1.1 was already proved by Fisher \cite{fis} (see also
Anderson \cite{An}). Observe that McDuff in \cite{MD} proved that
$\H(M)$ is perfect provided $M$ is the interior of a compact
manifold with boundary. There exist some generalizations of
Theorem 1.1 (see, e.g., Fukui and Imanishi \cite{fu-im}, and
Rybicki \cite{ry3}).

If $M$ is a smooth manifold then Theorem 1.1 has its smooth
analogue. Let $\D(M)$ be the identity component of the group of
all compactly supported $C^{\infty}$-diffeomorphisms of $M$.
Thurston proved that  $\D(M)$ is perfect and simple (see
\cite{Thu74}, \cite{ban2}). Also Mather in \cite{Mat74} proved the
same in the class of $C^r$-diffeomorphisms unless $r=\dim M+1$.
Analogous results for classical groups of diffeomorphisms are also
known (\cite{ban1}, \cite{ban2}, \cite{HR}, \cite{ry6}).

In the case of a manifold with boundary $M$ we denote by $M^o$ the
interior of $M$, and by $\pm$ the boundary of $M$. We will
consider the following groups:
$$ \H_c(M^o)\leq\H_c^{\p}(M)\leq\H_c(M)\leq\H(M^o).$$
Here $h\in\H_c^{\p}(M)$ if there is a compactly supported isotopy
$h_t$ connecting $h_0=\id$ with $h_1=h$ such that $h_t=\id$ on
$\pm$ for all $t$. Moreover, $\hm$ identifies with a subgroup of
$\H(M^o)$ by restricting elements of $\hm$ to $M^o$.

\begin{thm}
If the boundary $\pm$ is compact then $\hdm$ is a perfect group.
\end{thm}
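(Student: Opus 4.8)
The plan is to show that $\H_c^\partial(M)$ is perfect by reducing to the two cases already available: the perfectness of the homeomorphism group of the open interior $M^o$ (which is the content of Theorem 1.1 applied to the manifold-without-boundary $M^o$), and the perfectness of the homeomorphism group near the boundary, which is a product of collar-type pieces. The key geometric input is a collar neighborhood of $\partial M$: since $M$ is a manifold with boundary, there is a collar $\partial M \times [0,1)$, and because $\partial M$ is compact this collar is itself compact in the $[0,1-\delta]$ direction, so fragmentation techniques apply there.

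Let me see. First I would fix a collar $c : \partial M \times [0,2) \to M$ and split an arbitrary $h \in \H_c^\partial(M)$, via a fragmentation argument, into a product of homeomorphisms each supported either in $M^o$ (away from the boundary) or in the collar region $\partial M \times [0,2)$. Such a fragmentation is legitimate because $h$ is isotopic to the identity through a compactly supported isotopy $(h_t)$ fixing $\partial M$ pointwise; one applies the fragmentation lemma (implicit in Edwards--Kirby \cite{ed-ki}, Corollary 1.3, which underlies Theorem 1.1) to the cover of $M$ by the open set $M^o$ and the open collar. Factors supported in $M^o$ lie in $\H_c(M^o)$, which is perfect by Theorem 1.1, so they are products of commutators in $\H_c(M^o) \leq \H_c^\partial(M)$.

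The remaining factors, supported in the collar $\partial M \times [0,2)$ and equal to the identity on $\partial M = \partial M \times \{0\}$, are the crux. For these I would use a displacement/conjugation trick adapted to the collar: any such factor $g$ is supported in $\partial M \times [0,1]$, say, and one can push it off itself in the $[0,2)$ direction by a homeomorphism $\phi$ of the collar (squeezing the support toward the boundary and making room), so that $g$ and $\phi g \phi^{-1}$ have disjoint supports, or more precisely one realizes $g$ as a commutator using the infinite-swindle / Mazur-type argument available because the collar $\partial M \times [0,2)$ admits a self-embedding with room to spare. Since $g$ fixes $\partial M$ and is compactly supported in the collar, this whole construction stays inside $\H_c^\partial(M)$, and it expresses $g$ as a product of commutators of elements of $\H_c^\partial(M)$.

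The main obstacle will be the boundary factors, and in particular carrying out the displacement argument \emph{within} $\H_c^\partial(M)$, i.e. keeping every auxiliary homeomorphism equal to the identity on $\partial M$ throughout. One must check that the squeezing homeomorphism $\phi$ of the collar can be chosen to fix $\partial M$ pointwise while still creating the required disjointness, and that the infinite concatenation of displaced copies converges to a genuine compactly supported homeomorphism of $M$ fixing the boundary (this needs the collar to be a product, which the compactness of $\partial M$ guarantees together with the collar theorem). A subtlety worth flagging is the low-dimensional case $n=1$, where the collar is $\partial M \times [0,2)$ with $\partial M$ a finite set of points, so the boundary factors live in a product of half-line homeomorphism groups; here, as already noted for Theorem 1.1, an \emph{ad hoc} argument for the half-line is needed to close the gap, and I would treat it separately rather than through the general collar swindle.
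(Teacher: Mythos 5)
Your outer skeleton (fragment over $\{M^o,\hbox{collar}\}$, absorb the interior factors via Theorem 1.1) parallels the paper's reduction via Theorem 2.3, and is fine. The genuine gap is exactly where you put the crux: the collar factors. Two problems. First, making $g$ and $\phi g\phi^{-1}$ disjointly supported proves nothing by itself: displacement arguments of Burago--Ivanov--Polterovich type (Theorem 5.6 in the paper) take as \emph{hypothesis} that $g\in[H,H]$ and only then bound its commutator length in the ambient group, so disjointness cannot be the source of perfectness. Second, and fatally, the infinite swindle $S(g)=\prod_{j\geq 0}\phi^{j}g\phi^{-j}$ is a homeomorphism only if, besides disjointness of supports, the displacements of the conjugates $\phi^{j}g\phi^{-j}$ tend to $0$; otherwise the concatenation is discontinuous at the accumulation set. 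In Lemma 3.1 this decay is free because balls can be crushed to a point; but your $\phi$ must fix $\p M$ pointwise (to stay in $\H_c^{\p}(M)$), so it can squeeze supports only in the collar direction, never tangentially. Take $M=S^1\times[0,1]$ and $g(x,t)=(x+\alpha(t),t)$ with $\alpha$ a bump function, $\alpha(t^{*})=\pi$. For \emph{any} homeomorphism $\psi$ of the collar, $\psi g\psi^{-1}$ moves some point of the essential circle $\psi(S^1\times\{t^{*}\})$ a distance bounded below by a positive constant: if all antipodal pairs $(w,g(w))$ of that circle were mapped $\varepsilon$-close by $\psi$, projecting to the $S^1$-factor would yield a degree-one circle map $u$ with $u(x+\pi)$ uniformly $\varepsilon$-close to $u(x)$, forcing the degree to be even. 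Hence the displacements of $\phi^{j}g\phi^{-j}$ do \emph{not} decay, and your infinite concatenation fails to be continuous at $\p M$. Compactness of $\p M$ and the product collar are not what saves the construction; they are precisely the obstruction (essential copies of $\p M$ cannot be crushed). Note that in the paper's own swindle for half-balls (Lemma 3.1, Corollary 3.2(1)) the conjugator crushes the half-ball to a \emph{boundary point} and therefore does not fix $\p M$ pointwise --- admissible for Theorem 1.1, but not inside $\H_c^{\p}(M)$; this is exactly why Theorem 1.2 cannot be obtained by your route.

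What the paper does, and what your proposal is missing, is a spreading step \emph{before} any swindle. Reducing to $\H_c^{\p}(\p\times\R_+)$, it applies the Isotopy Extension Theorem (Theorem 2.5) recursively to produce from $f$ a homeomorphism $h$ with $h=f$ on $\p\times[\bar a_k,\bar b_k]$ for a sequence of slabs accumulating at the boundary, with $\supp(h)\subset\bigcup_k \p\times(a_k,b_k)$ and with untwisted pieces $h_k$ (conditions (3.1)--(3.4)); then $f=hh'$ where $h'=h^{-1}f$ has the same structure. Each piece $h_k$ is now supported \emph{away} from $\p M$ in its own slab; the commutator is produced slab-wise in disjoint neighborhoods $U_k$ (Corollary 3.2(2)) and glued over $k$ (Corollary 3.3), and the glued infinite products are continuous at $\p M$ because $h$ itself is a homeomorphism equal to the identity on the compact boundary, which forces the displacements of the $h_k$, hence of the glued swindle data, to tend to $0$. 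This ``spread first, then swindle'' order is what turns your divergent construction into a convergent one. Finally, your dimensional caveat is inverted: $n=1$, where $\p M$ is a finite set and there is no tangential direction, is the harmless case (the paper proves Theorem 1.2 and the $n=1$ boundary case of Theorem 1.1 by one and the same argument), whereas it is in dimension $\geq 2$, where $\p M$ carries topology, that the direct swindle breaks down and the no-twisting condition (3.4) together with the product pairs of Corollary 3.2(2) becomes essential.
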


 Concerning the simplicity of $\H_c(M)$ we have the following

\begin{cor}
Let $M$ be connected and satisfy the hypothesis of Theorem 1.1.
Then $M$ is boundaryless (i.e. $\pm=\emptyset$) if and only if
$\H_c(M)$ is simple.
\end{cor}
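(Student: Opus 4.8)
The plan is to prove the two implications separately, using Theorem 1.1 as the engine for the perfectness side and exploiting the boundary as the obstruction in the reverse direction. Throughout, recall that a nontrivial simple group is in particular perfect, so by Theorem 1.1 the group $\H_c(M)$ is already a candidate; the content of the corollary is to pin down exactly when simplicity holds.

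For the forward direction, suppose $M$ has nonempty boundary and exhibit a proper nontrivial normal subgroup. The natural candidate is $\hdm$, the group of elements admitting a compactly supported isotopy fixing $\pm$ pointwise; equivalently one can use the subgroup of elements that extend the identity on a neighborhood of $\p M$. First I would check that this is a normal subgroup of $\H_c(M)$: conjugating an isotopy fixing $\pm$ by any $g\in\H_c(M)$ again fixes $\pm$ since every element of $\H_c(M)$ preserves $\p M$, so the conjugated isotopy witnesses membership in $\hdm$. Next I would argue that $\hdm$ is proper: there exist homeomorphisms in $\H_c(M)$ that move boundary points, for instance a collar push-in near a boundary point, and such an element cannot lie in $\hdm$ because any ambient isotopy realizing it must displace the boundary. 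Finally $\hdm$ is nontrivial because $M^o$ is nonempty and supports compactly supported homeomorphisms isotopic to the identity rel boundary. This produces a proper nontrivial normal subgroup, so $\H_c(M)$ is not simple.

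For the reverse direction, assume $\pm=\emptyset$, so $M$ is a boundaryless connected manifold, and show $\H_c(M)$ is simple. The standard route is: first establish that $\H_c(M)$ acts transitively enough on $M$ (it acts transitively on points, and more generally it is \emph{fragmentable}, meaning every element is a product of homeomorphisms supported in charts), and second invoke a commutator/fragmentation argument to show that any normal subgroup $N\neq\{\id\}$ must be all of $\H_c(M)$. Concretely, pick $g\in N$ with $g\neq\id$; there is a point $x$ with $g(x)\neq x$, hence an open set $U$ with $U\cap g(U)=\emptyset$, and then a displacement/commutator trick shows $N$ contains all homeomorphisms supported in $U$. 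Using connectedness of $M$ together with transitivity one propagates this to every chart, and by fragmentation every element of $\H_c(M)$ is a product of such locally supported pieces, so $N=\H_c(M)$. Perfectness (Theorem 1.1) guarantees commutators generate the group, which is exactly what the displacement argument needs.

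The main obstacle is the reverse direction, specifically verifying the fragmentation property and the displacement argument in the purely topological category, where one lacks the convenience of vector fields and smooth flows. The appropriate substitute is the Edwards--Kirby deformation result \cite{ed-ki}, Corollary 1.3, already cited in the paper, which provides local contractibility and the fragmentation of homeomorphisms isotopic to the identity into chart-supported factors; combined with the transitivity of $\H_c(M)$ on the connected boundaryless manifold $M$, this yields the commutator-generation needed to conclude that every nontrivial normal subgroup is the whole group. I would organize the proof so that the topological fragmentation is quoted rather than reproved, and concentrate the work on the displacement lemma and the connectedness propagation.
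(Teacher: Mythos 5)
Your ``boundaryless $\Rightarrow$ simple'' direction is essentially the paper's own argument: the paper invokes a theorem of Ling, verifying its hypotheses via factorizability (Prop.~5.2) and transitive inclusivity, and its Prop.~4.1 is exactly your displacement--commutator argument. One imprecision worth noting: the displacement trick gives $[h_1,h_2]=[[h_1,g],h_2]\in N$ for $h_1,h_2$ supported in $U$, i.e.\ $N\supseteq[\H_U(M),\H_U(M)]$ --- not ``all homeomorphisms supported in $U$''. Upgrading to $\H_B(M)\s N$ for a ball $B$ with $\overline B\s U$ requires the \emph{local} perfectness statement (Basic Lemma~3.1: every element supported in $B$ is a single commutator of elements of $\H_U(M)$); the global perfectness of Theorem~1.1 by itself does not plug in, unless you redo the starwise-finer-cover commutator bookkeeping carried out in Prop.~4.1.

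The genuine gap is in the other direction, producing a proper normal subgroup when $\p_M\neq\emptyset$. Your primary witness $\H^{\p}_c(M)$ is \emph{not} proper when $\dim M=1$: for $M=[0,1]$ or $M=\R_+$ every element of $\H_c(M)$ already fixes $\p_M$ pointwise, and the straight-line isotopy $h_t(x)=(1-t)x+th(x)$ is compactly supported and fixes $\p_M$ for all $t$, so $\H^{\p}_c(M)=\H_c(M)$ and no contradiction with simplicity is obtained. Correspondingly, your properness argument (``homeomorphisms that move boundary points'') is vacuous in dimension one, where no such homeomorphisms exist in the identity component. Moreover, $\H^{\p}_c(M)$ is \emph{not} ``equivalently'' the subgroup of elements that are the identity near $\p_M$: the latter is $\H_c(M^o)$, and the paper records the chain $\H_c(M^o)\leq\H^{\p}_c(M)\leq\H_c(M)$, with Cor.~4.2 asserting precisely that the first inclusion is proper. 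The correct witness --- the one the paper uses --- is $\H_c(M^o)$: it is normal (conjugation preserves compact support in the interior), nontrivial, and proper in \emph{every} dimension, properness being witnessed for instance by a compactly supported homeomorphism equal to $x\mapsto x^2$ in a collar coordinate near a boundary point, which fixes $\p_M$ but is not the identity on any neighborhood of it. With $\H_c(M^o)$ in place of $\H^{\p}_c(M)$ and this properness argument, your forward direction is repaired.
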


The proof will be given in section 4 together with  further
comments on the simplicity by using some ideas of Ling \cite{li}.

 Conjugation-invariant norms related to homeomorphism groups on $M$ are considered in section 5.
Recall that a group is \emph{bounded} if every
conjugation-invariant norm  is bounded on it.
 Following an argument from \cite{BIP}  we will prove in section 6
 the following
 \begin{thm}
 Under the assumption of Theorem 1.1 on $M$,
 $\H_c(M)$ is bounded if and only if $\frag_{M}$ is bounded,
 where $\frag_M$ is the fragmentation norm on $M$
 with respect to homeomorphisms.
 In particular, $\H_c(\R^n)$ is bounded.
 \end{thm}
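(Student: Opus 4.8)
The plan is to prove both implications, with the forward one being immediate and the reverse one carrying all the content. First, observe that $\frag_M$ is itself a conjugation-invariant norm: if $\phi\in\H_c(M)$ and $g=h_1\cdots h_k$ with each $h_i$ supported in a ball $B_i$ (an open set homeomorphic to $\R^n$, with compact closure contained in a chart and hence displaceable), then $\phi g\phi^{-1}=(\phi h_1\phi^{-1})\cdots(\phi h_k\phi^{-1})$ exhibits $\phi g\phi^{-1}$ as a product of $k$ factors supported in the balls $\phi(B_i)$, so $\frag_M(\phi g\phi^{-1})=\frag_M(g)$; subadditivity and symmetry are clear, and finiteness is the fragmentation property valid under the hypotheses of Theorem 1.1. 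Consequently, if $\H_c(M)$ is bounded then every conjugation-invariant norm, in particular $\frag_M$, is bounded; this settles the easy direction.

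For the converse, assume $\frag_M\le C$ and let $\nu$ be an arbitrary conjugation-invariant norm on $\H_c(M)$; I want to bound $\nu$. Given $g\in\H_c(M)$, fragment it as $g=h_1\cdots h_k$ with $k\le C$ and each $h_i$ supported in a ball, so that $\nu(g)\le\sum_i\nu(h_i)$. Thus it suffices to prove the key sublemma: there is a constant $D=D(\nu,M)$ with $\nu(h)\le D$ for every $h\in\H_c(M)$ supported in a ball. Granting this, $\nu(g)\le CD$ for all $g$, so $\nu$ is bounded and $\H_c(M)$ is bounded.

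To prove the sublemma I would use a displacement/infinite-swindle argument exploiting that a ball is a portable piece. Fix $h$ supported in a ball $B$, and enclose $B$ in a larger ball $B'$ with $\overline{B'}$ compact. Inside $B'$ choose pairwise disjoint balls $B_0=B,B_1,B_2,\ldots$ with $\operatorname{diam}(B_i)\to0$, all accumulating at a single point $p\in B'$, together with a compactly supported homeomorphism $\theta$ of $B'$ realizing the shift $\theta(B_i)=B_{i+1}$; by Alexander's trick $\theta\in\H_c(M)$. Set $h_i=\theta^i h\theta^{-i}$, supported in $B_i$. Since the $B_i$ are pairwise disjoint and $\sup_x\dist(x,h_i(x))\le\operatorname{diam}(B_i)\to0$, the infinite product $G=h_0h_1h_2\cdots$ converges to a homeomorphism supported in $\overline{B'}$ (continuity at $p$ follows from the shrinking of the supports, and $G\in\H_c(M)$ again by Alexander's trick). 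A direct computation gives $\theta G\theta^{-1}=h_1h_2\cdots=h_0^{-1}G$, hence $h=h_0=G\theta G^{-1}\theta^{-1}=[G,\theta]$, and therefore $\nu(h)=\nu\bigl(G\theta G^{-1}\theta^{-1}\bigr)\le\nu(G\theta G^{-1})+\nu(\theta^{-1})=2\nu(\theta)$, where conjugation-invariance and symmetry of $\nu$ were used. Crucially $\theta$ does not depend on $h$, and by the homogeneity of $M$ all the shift homeomorphisms arising from different balls (of the finitely many interior/boundary types in the fragmentation cover) are mutually conjugate in $\H_c(M)$, so $\nu(\theta)$ takes only finitely many values; taking $D$ to be twice their maximum proves the sublemma. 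Finally, for $M=\R^n$ every compactly supported element is supported in a single ball $B(0,R)$, so $\frag_{\R^n}\le1$ is bounded and the theorem yields that $\H_c(\R^n)$ is bounded.

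The main obstacle is the key sublemma, and within it the two genuinely topological points: verifying that the infinite product $G$ is a homeomorphism of $M$ (this is where the homeomorphism setting differs from the smooth one, and is handled by the shrinking-support estimate rather than by controlling derivatives), and arranging the shift homeomorphism $\theta$ near $\pm$, where balls must be replaced by half-balls and the accumulation point chosen appropriately. Everything else — the fragmentation, the subadditivity, and the commutator estimate $\nu([G,\theta])\le2\nu(\theta)$ — is routine.
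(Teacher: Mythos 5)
Your overall strategy coincides with the paper's: the forward implication is formal (and your verification that $\frag_M$ is a conjugation-invariant norm is correct), and all the content sits in the sublemma that an arbitrary conjugation-invariant norm $\nu$ is uniformly bounded on homeomorphisms supported in a single ball, which together with $\frag_M\le C$ gives $\nu\le CD$. Where you differ is the proof of that sublemma: you run Mather's infinite swindle to write $h=[G,\theta]$ with $\theta$ independent of $h$, getting $\nu(h)\le 2\nu(\theta)$; this is in substance Lemma 3.1 of the paper (same configuration of disjoint shrinking balls and a shift), re-used for boundedness. The paper instead quotes Theorem 6.1, proved via displacement: an $f\in\H_c(M)$ with $\overline{f(B\cup V)}\s V$ displaces $\H_B(M)$ to every order (Prop. 6.2), and the Burago--Ivanov--Polterovich estimate (Theorem 5.6(2)), combined with perfectness of $\H_B(M)$ from Theorem 1.1, gives $\nu(h)\le 14\nu(f)$. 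Your route is more self-contained, needs no perfectness input, and gives a better constant; the uniformity over the (infinitely many) balls occurring in fragmentations, which you settle by conjugating the shift homeomorphisms, is the same point the paper settles implicitly via conjugation-invariance of $\nu$ and transitive inclusivity, so you are no less rigorous there. For $\pm=\emptyset$ or $\dim M\ge 2$ your argument is correct.

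There is, however, a genuine gap: the case $n=1$ with $\pm\neq\emptyset$ (e.g. $M=[0,1]$ or $M=\R_+$), which is allowed by the hypotheses of Theorem 1.1 and which the paper explicitly carves out of the easy argument ("except for the case $n=1$ and $\pm\neq\emptyset$") and treats separately in the proof of Theorems 1.4/1.5. Your remark that near $\pm$ one replaces balls by half-balls "with the accumulation point chosen appropriately" does not work in dimension one: a half-ball of $\R_+$ is an interval $[0,b)$ containing the boundary point, every homeomorphism of $\R_+$ fixes that point, so there do not exist two disjoint half-balls, no shift $\theta$ with $\theta(B_i)=B_{i+1}$ can be constructed, and no $f$ displaces a half-ball --- so neither your swindle nor the displacement argument of Theorem 6.1 applies to elements supported near the boundary. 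The paper's fix is of a different nature: using the isotopy extension theorem, an arbitrary $g\in\H^{\p}_c(\pm\t\R_+)$ is written as $g=h_1h_2h_3h_4$, where each $h_j$ is supported not in a single half-ball but in an infinite disjoint union $U_j$ of annuli $\pm\t(a_k,b_k)$ accumulating at the boundary; such infinite unions, unlike half-balls, can be displaced (Prop. 6.3), and then Theorem 5.6(2) bounds each $\nu(h_j)$ by a quantity independent of $g$. Your proof needs this (or an equivalent) supplement to cover dimension one with boundary.
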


 Also we have the following boundedness theorem.

\begin{thm}
Let $\pm$ be compact. If the group $\H_c(M)$ is bounded then
$\H(\pm)$ is bounded also. Moreover, if the group $\H_c(M^o)$ is
bounded then so is the group $\hdm$.
\end{thm}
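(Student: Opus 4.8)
We treat the two assertions separately; throughout we use the standard fact (see \cite{BIP}) that boundedness is preserved under surjective homomorphisms. Indeed, if $\phi\colon G\r H$ is surjective and $\nu$ is a conjugation-invariant norm on $H$, then $\nu\ci\phi$ is a conjugation-invariant \emph{pseudo}-norm on $G$; adding the trivial norm $\delta$ (with $\delta(g)=1$ for $g\neq\id$) yields a genuine conjugation-invariant norm $\nu\ci\phi+\delta$ on $G$, which is bounded whenever $G$ is bounded, and hence so is $\nu$ on $H=\phi(G)$.

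For the first assertion, the plan is to consider the restriction homomorphism
$$\rho\colon\hm\r\H(\pm),\qquad \rho(h)=h|_{\pm}.$$
Since $\pm$ is compact, every compactly supported isotopy joining $\id$ to $h$ restricts to an isotopy of $\pm$, so $\rho$ is well defined with values in the identity component $\H(\pm)$. It is surjective: given $g\in\H(\pm)$ with an isotopy $g_t$ from $\id$ to $g$, fix a collar $\pm\t[0,1)\s M$ and a function $\lambda\colon[0,1)\r[0,1]$ equal to $1$ near $0$ and to $0$ near $1$; then $(x,s)\mapsto(g_{\lambda(s)}(x),s)$, extended by $\id$ off the collar, defines an element of $\hm$ mapping to $g$ under $\rho$. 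Thus $\rho$ is a surjective homomorphism, and boundedness of $\hm$ forces boundedness of $\H(\pm)$.

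For the second assertion, note the inclusion $\H_c(M^o)\le\hdm$, an element of $\H_c(M^o)$ being extended by $\id$ over $\pm$. Let $\nu$ be a conjugation-invariant norm on $\hdm$. Its restriction to $\H_c(M^o)$ is again a conjugation-invariant norm, hence bounded by some $D$ by hypothesis. The plan is to show that every $h\in\hdm$ is a product of a uniformly bounded number of $\hdm$-conjugates of elements of $\H_c(M^o)$, which yields $\nu(h)\le\mathrm{const}\cdot D$ and proves the claim. First I would fragment $h$ (using the Edwards--Kirby fragmentation, cf. \cite{ed-ki}) as $h=u\,c$, where $u\in\H_c(M^o)$ is supported in the interior and $c$ is supported in a collar $\pm\t[0,1)$ and fixes $\pm$ pointwise. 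The interior factor $u$ is already controlled by $D$, so it remains to bound $\nu(c)$.

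The heart of the matter, and the step I expect to be the main obstacle, is bounding the collar factor $c$. The usual \cite{BIP} displacement trick does not apply directly: since every element of $\hdm$ fixes $\pm$ pointwise and the closure of the collar meets $\pm$, the collar is \emph{not} displaceable. The remedy is to exploit the one-sided non-compactness of the collar direction $[0,1)$. Choosing $1=a_0>a_1>a_2>\cdots\r 0$, the sub-collars $\pm\t[a_{k+1},a_k)$ are pairwise disjoint, lie in the interior $M^o$, and accumulate only at the fixed boundary $\pm$, where $c=\id$. Using the self-similar product structure of the collar I would construct an element $\Theta\in\hdm$ together with interior models realizing $c$ through a Mather-type infinite-product swindle, expressing $c$ (up to an $\H_c(M^o)$-factor) as a bounded product of $\hdm$-conjugates of elements of $\H_c(M^o)$; the overlap of supports occurs only along $\pm$, where all the relevant maps are the identity, so it is harmless. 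This gives a uniform bound on $\nu(c)$ in terms of $D$ and completes the argument.
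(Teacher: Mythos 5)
Your treatment of the first assertion is correct and coincides with the paper's: the restriction map $\hm\ni h\mapsto h|_{\pm}\in\H(\pm)$ is an epimorphism (the paper gets surjectivity from the isotopy extension theorem, you build the extension by hand in a collar), and boundedness passes to quotients (Lemma 1.10 of \cite{BIP}). Likewise your first reduction for the second assertion --- fragment $h=uc$ with $u\in\H_c(M^o)$ and $c$ supported in a collar and fixing $\pm$, and bound $\nu(u)$ by the hypothesis --- is exactly what the paper does (implicitly). The gap is in your treatment of the collar factor, the very step you flagged as the main obstacle: it is \emph{impossible}, in general, to write $c$, even up to an $\H_c(M^o)$-factor, as a product of finitely many $\hdm$-conjugates of elements of $\H_c(M^o)$. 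The obstruction is elementary: if $u\in\H_c(M^o)$ and $g\in\hdm$, then $\supp(gug^{-1})\s g(\supp(u))$ is again a compact subset of $M^o$, so any finite product of such conjugates (times a further element of $\H_c(M^o)$) has support contained in a compact subset of $M^o$, hence staying at positive distance from $\pm$. But the collar factor $c$ typically has support accumulating at $\pm$: already for $M=\R_+$ take $c$ acting nontrivially on $[1/(k+1),1/k]$ for every $k$. No swindle can circumvent this; indeed Mather's swindle applied in a collar produces precisely elements (the $S(h)$ and $\phi$ of Lemma 3.1 and Cor. 3.2) whose supports accumulate at $\pm$ --- they lie in $\hdm$ but not in $\H_c(M^o)$, so the hypothesis on $\H_c(M^o)$ gives no control whatsoever on their norms.

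The paper's actual route for the collar factor stays entirely inside $\hdm$ and never invokes the hypothesis there. Arguing as in the proof of Theorem 1.2, one decomposes $c=h_1h_2h_3h_4$, where each $h_j$ is supported in a set $U_j$ which is an infinite disjoint union of annuli $\p\t(\,\cdot\,,\cdot\,)$ accumulating at $\pm$ and satisfies the no-twisting condition (3.4). By Corollaries 3.2 and 3.3, each $h_j$ is a single commutator of elements supported near $U_j$, and by Prop. 6.3 there is a shift $\bar f_j=\id\t f_j\in\H^{\p}_c(\p\t\R_+)\s\hdm$ which $m$-displaces $\H_{U_j}(M)$ for every $m$: one displaces the unions of annuli by pushing them toward the boundary, not the whole collar (which, as you correctly observed, is undisplaceable). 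The displacement-energy theorem of \cite{BIP} (Theorem 5.6(2)) then yields $\nu(h_j)\leq 14\nu(\bar f_j)$, and conjugating by the $\psi$ of the second assertion of Prop. 6.3 makes the numbers $\nu(\bar f_j)$ independent of $c$. Note that the resulting bound $\nu(c)\leq 14(\nu(\bar f_1)+\cdots+\nu(\bar f_4))$ has nothing to do with your constant $D$; the hypothesis that $\H_c(M^o)$ is bounded enters only through the interior factor $u$. Your instinct to exploit the one-sided collar direction was right, but it must be implemented via displacement energy inside $\hdm$, not via a reduction to $\H_c(M^o)$.
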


Observe that $\H_c(M^o)$ is bounded if $M^o$ is portable (sect.
6). In \cite{ry6} the second-named author proved that $\H(M^o)$ is
bounded provided  so is $\H_c(M^o)$.

The last part of the paper is devoted to the structure of the
universal coverings of some homeomorphism groups. Let
$\H_c(M)^{\sim}$  denote the universal covering group of
$\H_c(M)$.

 \begin{thm} Let $n=\dim M\geq 2$ or $\pm=\emptyset$. The  group
 $\H_c(M)^{\sim}$
 is perfect. Moreover, the groups $\H_c(\R^n)^{\sim}$ and $\H_c(\R^n_+)^{\sim}$
   are acyclic, where $\R^n_+=\{x\in\R^n: x_n\geq 0\}$ is the half-space.
\end{thm}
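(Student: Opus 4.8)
The plan is to work directly on the level of the covering group, representing an element of $\H_c(M)^{\sim}$ by the path-homotopy class (rel endpoints) of a compactly supported isotopy $h_t$ with $h_0=\id$, the group law being the pointwise product $[h_t][k_t]=[h_tk_t]$ and the covering projection $\H_c(M)^{\sim}\to\hm$ being $[h_t]\mapsto h_1$. Its kernel is $\pi_1(\hm,\id)$, which is central. Since $\hm$ is perfect by Theorem 1.1, the subgroup $[\H_c(M)^{\sim},\H_c(M)^{\sim}]$ projects onto $\hm$, whence $\H_c(M)^{\sim}=\pi_1(\hm)\cdot[\H_c(M)^{\sim},\H_c(M)^{\sim}]$, so perfectness would follow once the central subgroup $\pi_1(\hm)$ is shown to lie in $[\H_c(M)^{\sim},\H_c(M)^{\sim}]$; in fact I would prove the stronger statement that a generating set consists of single commutators. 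First I would lift the fragmentation used for Theorem 1.1 to isotopies: every compactly supported isotopy is path-homotopic to a finite pointwise product of isotopies each supported in an embedded $n$-ball in $M^o$, so that $\H_c(M)^{\sim}$ is generated by the classes $[g_t]$ with $\supp(g_t)\s B$ for a ball $B\s M^o$.

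The heart of the argument is an infinite swindle performed with isotopies. Given $g_t$ supported in $B\s M^o$, the hypothesis $n\geq 2$ or $\pm=\emptyset$ provides room to choose a null-sequence of pairwise disjoint balls $B_1,B_2,\dots\s M^o$ (with $B_1=B$) shrinking to a point, transplanting homeomorphisms carrying $B$ onto $B_i$, and a homeomorphism $s$, isotopic to $\id$ through some $s_t$, with $s(B_i)=B_{i+1}$ intertwining the $i$-th and $(i+1)$-st transplanted copies $(g_t)_i$ of $g_t$. The infinite product $G_t=\prod_{i\geq 1}(g_t)_i$ is a well-defined compactly supported isotopy because the supports shrink to the accumulation point, and the telescoping identity $G_1=(g_1)_1\cdot sG_1s^{-1}$ gives $(g_1)_1=[G_1,s]$ at the endpoint. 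I expect this endpoint computation to be the easy part; the main obstacle is that it only identifies the images in $\hm$, whereas in $\H_c(M)^{\sim}$ the commutator $[[G_t],[s_t]]$ and the class $[g_t]$ differ a priori by an element of the central subgroup $\pi_1(\hm)$. Pinning this discrepancy down — by choosing $s_t$ and the transplantations so that the commutator isotopy is genuinely homotopic rel endpoints to $(g_t)_1$, or by absorbing the leftover ball-supported loop into a further commutator via the same swindle — is exactly the path-level bookkeeping that makes the statement about the covering group, rather than about $\hm$, nontrivial. Once settled, every generator $[g_t]$ is a commutator and $\H_c(M)^{\sim}$ is perfect.

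For the acyclicity of $\H_c(\R^n)^{\sim}$ and $\H_c(\R^n_+)^{\sim}$ I would exploit that $\R^n$ and the half-space $\R^n_+$ are portable: there is room not merely for one null-sequence of displaced copies but for a uniform doubling of any finitely many compactly supported isotopies together with a compressing shift. Concretely, for a finitely generated subgroup $H\leq\H_c(\R^n)^{\sim}$ I would realize, entirely by isotopies and hence inside the covering group, a doubling homomorphism $\phi\colon H\to\H_c(\R^n)^{\sim}$ onto a copy commuting with $H$, together with an element $u$ satisfying the binate identity $\tilde h=[u,\phi(\tilde h)]$ for all $\tilde h\in H$; this is the swindle of the previous paragraph made uniform in $H$. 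A group carrying such a structure is acyclic by the standard criterion that binate (equivalently mitotic/dissipated) groups are acyclic, in the spirit of de la Harpe and McDuff; compare the homology-vanishing technique of \cite{MD}. As before, the substantive point — and the main obstacle — is to verify the binate relations up to path-homotopy, i.e. to control the central $\pi_1$-contributions uniformly over $H$, which is feasible for $\R^n$ and $\R^n_+$ precisely because portability lets one choose all auxiliary isotopies coherently. This yields $H_\ast(\H_c(\R^n)^{\sim})=H_\ast(\H_c(\R^n_+)^{\sim})=0$ in positive degrees, and in particular recovers perfectness in these two cases.
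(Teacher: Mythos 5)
Your overall architecture is sound (fragmentation of isotopies, then an infinite swindle for ball-supported classes; a binate-type structure for acyclicity), and you have correctly located the crux: every identity you need must hold \emph{exactly} in $\H_c(M)^{\sim}$, not merely after projection to $\hm$, while a priori it holds only up to central elements of $\pi_1(\hm)$. But you leave that crux unresolved in both halves of your argument (``once settled\dots'', ``\dots is feasible''), and resolving it is precisely the content of the paper's Section 7, so this is a genuine gap rather than a finished proof. The missing device is Lemma 7.1 together with the two reparametrization operators $\P^{\star}G$ (paths equal to $e$ on $[0,\frac12]$) and $\P^{\square}G$ (paths constant on $[\frac12,1]$): every class has a special ($\star$) representative and every class has a square ($\square$) representative, and the key pointwise observation is that if $g\in\P^{\star}\H_c(M)$ and $\varphi\in\P^{\square}\H_c(M)$ then $\varphi_t g_t\varphi_t^{-1}=\varphi_1 g_t\varphi_1^{-1}$ for \emph{every} $t$ (both sides are $\id$ for $t\le\frac12$, and $\varphi_t=\varphi_1$ for $t\ge\frac12$). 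Hence conjugation by the class $[\varphi]$ acts on special classes as pointwise conjugation by the single homeomorphism $\varphi_1$, with no $\pi_1$-correction at all. With this device your telescoping identity closes on the nose: taking $g_\bullet$, hence $G_\bullet$, special and $s_\bullet$ square, one gets $[[G_\bullet],[s_\bullet]]=[G_\bullet]\cdot\conj_{[s_\bullet]}\bigl([G_\bullet]^{-1}\bigr)=[\,t\mapsto G_t s_1 G_t^{-1}s_1^{-1}\,]=[g_\bullet]$ by the disjoint-support cancellation you already have. Without it, neither of your proposed fixes is a proof: you never exhibit the required homotopy rel endpoints, and ``absorbing the leftover loop by a further swindle'' is circular, since the leftover is again an arbitrary loop of the very type you are trying to kill.

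The same remark applies to the acyclicity half: verifying the binate relations $\tilde h=[u,\phi(\tilde h)]$ with $\phi(\tilde h)$ centralizing $H$ requires the identical special/square bookkeeping (your observation that all elements of a finitely generated $H\le\H_c(\R^n)^{\sim}$ are supported in a common compact set is correct and does make the transplantation available). Once that bookkeeping is in place, your route via the acyclicity of binate/pseudo-mitotic groups would be a legitimate alternative packaging; the paper instead redoes Mather's homological swindle directly in the path category: Lemma 7.3 shows $\iota_*:H_r(\H_B(\R^n)^{\sim})\to H_r(\H_c(\R^n)^{\sim})$ is an isomorphism (using the $\star/\square$ trick plus the fact that conjugation acts trivially on group homology), and then the identities $\psi_0=\eta\Delta$ and $\psi_{1*}=\psi_{0*}$, the K\"unneth formula and induction on $r$ give $\psi_{0*}\{c\}=\iota_*\{c\}+\psi_{0*}\{c\}$, whence $\iota_*\{c\}=0$ and $\{c\}=0$. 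This homological formulation is what lets the swindle kill cycles in all degrees, not just single group elements. A further minor slip: when $\pm\neq\emptyset$ (allowed for $n\ge 2$), fragmentation produces half-ball-supported factors as well as factors supported in balls of $M^o$, so your generating set and your swindle must handle half-balls too (Corollary 3.2(1) provides the displacement there); restricting to balls in $M^o$ loses the boundary behaviour.
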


The proof will be given in section 7.  We will also study the
problem of boundedness.
\begin{thm}
 Let $\frag^{iso}_M$ be the isotopy fragmentation norm on the universal covering group $\H_c(M)^{\sim}$
 (c.f. sect.7). Suppose that $\dim M\geq 2$ or $\pm=\emptyset$.
 Then $\H_c(M)^{\sim}$ is bounded if and only if $\frag^{iso}_{M}$ is bounded.
 \end{thm}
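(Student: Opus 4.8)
The plan is to follow the scheme of the proof of Theorem 1.4, transplanting it from $\H_c(M)$ to its universal covering group. The first, purely formal observation is that $\frag^{iso}_M$ is itself a conjugation-invariant norm on $\H_c(M)^{\sim}$: the set of classes represented by isotopies supported in an embedded ball is invariant under conjugation, since conjugating such an isotopy by some $\tilde\phi\in\H_c(M)^{\sim}$ covering $\phi\in\H_c(M)$ produces an isotopy supported in $\phi(B)$, again a ball. Consequently, if $\H_c(M)^{\sim}$ is bounded then every conjugation-invariant norm, in particular $\frag^{iso}_M$, is bounded; this is the easy implication, and it remains to prove the converse.

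So suppose $\frag^{iso}_M$ is bounded, say by a constant $C$, and let $\nu$ be an arbitrary conjugation-invariant norm on $\H_c(M)^{\sim}$. By definition of the isotopy fragmentation norm, every $\tilde h\in\H_c(M)^{\sim}$ can be written as a product $\tilde h=\tilde g_1\cdots\tilde g_k$ with $k\le C$, where each $\tilde g_i$ is the class of an isotopy supported in an embedded ball $B_i$. The triangle inequality gives $\nu(\tilde h)\le\sum_i\nu(\tilde g_i)$, so the whole statement reduces to a single uniform estimate: that $\nu$ is bounded on the set of all ball-supported classes. I would obtain this in two steps, first a uniform bound on a fixed model ball and then a conjugation argument to spread it to all balls.

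For the model ball, a class supported in $B$ lies in the image of the natural homomorphism $\H_c(B)^{\sim}\to\H_c(M)^{\sim}$, where $\H_c(B)$ is isomorphic to $\H_c(\R^n)$ or to $\H_c(\R^n_+)$ according as $B$ is an interior or a boundary ball. The pullback of $\nu$ along this homomorphism is again a conjugation-invariant norm, so it suffices to know that $\H_c(\R^n)^{\sim}$ and $\H_c(\R^n_+)^{\sim}$ are bounded. This is exactly the point where the portability of $\R^n$ and $\R^n_+$ enters: the infinite-swindle argument that proves $\H_c(\R^n)$ bounded in Theorem 1.4 lifts to the universal covering group, because the shrinking isotopies used in the swindle lift canonically and the resulting infinite product still has compact support, whence $\H_c(\R^n)^{\sim}$ and $\H_c(\R^n_+)^{\sim}$ are bounded (cf. the portability discussion behind Theorem 1.5). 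Thus every ball-supported class has $\nu$-norm at most a constant $D_0$, namely the maximum of the two model bounds.

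Finally, by the homogeneity of $M$ any two embedded balls of the same type (interior, respectively boundary) are carried one onto the other by an element of $\H_c(M)$, hence by a lift in $\H_c(M)^{\sim}$; conjugation-invariance of $\nu$ then shows $\sup\{\nu(\tilde g):\tilde g\text{ ball-supported}\}=D_0$ independently of the ball. Combining this with the fragmentation estimate yields $\nu(\tilde h)\le C\,D_0$ for every $\tilde h$, so $\nu$ is bounded, and since $\nu$ was arbitrary, $\H_c(M)^{\sim}$ is bounded. I expect the genuine obstacle to be precisely the boundedness of the model groups $\H_c(\R^n)^{\sim}$ and $\H_c(\R^n_+)^{\sim}$, that is, checking that the portability swindle is compatible with passage to the universal cover; one must also verify that the hypothesis $\dim M\ge 2$ or $\pm=\emptyset$ is what guarantees the isotopy fragmentation and the canonical lifts are available, after which the remaining steps are the same bookkeeping as in the proof of Theorem 1.4.
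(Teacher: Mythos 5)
Your outline matches the paper's own proof in structure (the easy direction from conjugation-invariance of $\frag^{iso}_M$; the hard direction from fragmentation, Prop. 5.2, plus a uniform bound on ball-supported classes, which is the paper's Theorem 7.4), but the two steps you treat as formalities are exactly where the content lies, and as written both have gaps. The first is your ``purely formal observation'': conjugating an isotopy $g_t$ supported in $B$ by an isotopy $\phi_t$ yields $\phi_t g_t\phi_t^{-1}$, whose support is $\bigcup_t\phi_t(\supp(g_t))$, \emph{not} a subset of $\phi_1(B)$. So neither the conjugation-invariance of $\frag^{iso}_M$ nor the displaceability of ball-supported subgroups of $\H_c(M)^{\sim}$ is formal. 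The paper repairs this with the special paths of Section 7: by Lemma 7.1 one may represent $\tilde g$ by $g^{\star}\in\P^{\star}\H_c(M)$ (identity for $t\le \frac{1}{2}$) and $\tilde\phi$ by $\phi^{\square}\in\P^{\square}\H_c(M)$ (constant for $t\ge \frac{1}{2}$); only then is the conjugate represented by an isotopy supported in $\phi_1(B)$. This device is what allows any displacement argument to pass to the covering group at all.

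The second and more serious gap is the boundedness of the model groups, which you yourself flag as ``the genuine obstacle'' but then dispose of by asserting that the portability swindle ``lifts''. It does not lift for free: the displacement-energy theorem (Theorem 5.6(2), from \cite{BIP}) bounds a conjugation-invariant norm only on the commutator subgroup $[H,H]$ of the displaced subgroup $H$. In the base group this suffices because $\H_B(M)$ is perfect (Theorem 1.1, via Mather and the Alexander trick); in the covering group one needs every class supported in a ball to be a product of commutators of classes supported in that ball, i.e. the perfectness of $\H_B(M)^{\sim}$ --- equivalently of $\H_c(\R^n)^{\sim}$ and $\H_c(\R^n_+)^{\sim}$. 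That is Theorem 1.6 (via Theorem 7.2 and Lemma 7.3), a substantive homological result proved by adapting Mather's acyclicity swindle to homotopy classes of isotopies, and it is precisely where the hypothesis $\dim M\ge 2$ or $\pm=\emptyset$ enters; it cannot be obtained by ``canonically lifting'' the isotopies in the swindle, since the relevant objects are homotopy classes rel endpoints, not homeomorphisms. Your proposal offers no substitute for this, so the uniform bound $\nu(\tilde g)\le D_0$ on ball-supported classes is unsupported. (A minor further point: the map $\H_c(B)^{\sim}\r\H_c(M)^{\sim}$ along which you pull back $\nu$ need not be injective, so the pullback is a priori only a pseudo-norm; this is harmless but should be addressed.) The paper's proof assembles exactly the missing pieces: Prop. 6.2 (displacement), Lemma 7.1 (special paths), Theorem 1.6 (perfectness), and Theorem 5.6(2) yield Theorem 7.4, after which Prop. 5.2 and the assumed bound on $\frag^{iso}_M$ conclude as you describe.
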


We emphasize that many facts presented in this paper are specific
for the topological category, that is there are no longer true in
the smooth (or even Lipschitz) category. See, e.g., Remark 3.4 and
Prop. 6.3.

\section{Fragmentation property and isotopy extension theorem}

  The results of this paper depend essentially on the deformation properties
for the spaces of imbeddings obtained by Edwards and Kirby in
\cite{ed-ki}. See also Siebenmann \cite{sie}. Let us recall basic
notions and facts from \cite{ed-ki}.

 Given a subset $S\s M$, by
$\H_S(M)$  we denote the path connected identity component of the
subgroup of all elements of $\H(M)$ with compact support contained
in $S$. By a \emph{ball} (resp. \emph{half-ball}) $B$ we mean rel.
compact open ball (resp. half-ball with $\p_B=B\cap\p_M$) embedded
in $M$ with its closure. By $\B$ we denote the family of all balls
and half-balls in $M$.

Using the Alexander trick, we have that $\H(\R^n)$ coincides with
the group of all compactly supported homeomorphisms of $\R^n$. In
fact, if $\supp(g)$ is compact, we define an isotopy $g_{t} :
\mathbb{R}^{n} \rightarrow \mathbb{R}^{n}$, $t \in I$, from the
identity to $g$, by

\begin{equation*}
g_{t}(x)= \left\{
\begin{array}{lcl}
tg\left( \frac{1}{t}x \right)& for & t>0\\
x&for&t=0.
\end{array} \right.
\end{equation*}

In particular, for every ball $B$ in $M$ the group $\H_B(M)$
consists of all homeomorphisms compactly supported in $B$. Observe
that the Alexander trick is no longer true in the smooth category.

Let us formulate the fragmentation property in the following
stronger way.
\begin{dff}  Let  $\U$ be an open  covering of $M$.
 A subgroup $G\leq \H(M)$ is  \emph{locally continuously factorizable}
  if  for any finite subcovering $(U_i)_{i=1}^d$ of $\B$, there
 exist a neighborhood $\P$ of $\id\in G$ and continuous mappings $\sigma_i:\P\r G$, $i=1\ld d$,
  such that   for all $f\in
 \P$ one has \begin{equation*}f=\sigma_1(f)\ldots \sigma_d(f),\quad \supp(\sigma_i(f))\s U_i, \forall i.
 \end{equation*}
 \end{dff}

 Throughout for a topological group $G$ by $\mathcal{P}G$ we will denote
the totality of paths  $\gamma:I\r G$ with $\gamma(0)=e$ (where
$I=[0,1]$). Observe that Def. 2.1 can also be formulated for $\P
G$ rather than $G$, where $G\leq\H(M)$.

 From now on $M$
is a metrizable topological manifold. If $U$ is a subset of $M$, a
\emph{proper imbedding} of $U$ into $M$ is an imbedding $h: U
\rightarrow M$ such that $h^{-1}(\partial M)=U \cap \partial M$.
An \emph{isotopy} of $U$ into $M$ is a family of imbeddings
$h_{t}: U \rightarrow M$, $t \in I$, such that the map $h: U
\times I \rightarrow M$ defined by $h(x,t)=h_{t}(x)$ is
continuous. An isotopy is \emph{proper} if each imbedding in it is
proper. Now let $C$ and $U$ be  subsets of $M$ with $C\subseteq
U$. By $I(U,C;M)$ we denote the space of proper imbeddings of $U$
into $M$
 which equal the identity on $C$, endowed with the compact-open topology.

Suppose $X$ is a space with subsets $A$ and $B$. A
\emph{deformation of A into B}
 is a continuous mapping $\varphi : A \times I\rightarrow X$ such that $\varphi|_{A\times 0}=\id_{A}$
  and $\varphi(A\times 1) \subseteq B$. If $\P$ is a subset of
  $I(U;M)$ and $\varphi:\P \times I\rightarrow I(U;M)$ is a deformation of $\P$,
  we may equivalently view $\varphi$ as a map $\varphi:\P\times I\times U\rightarrow M$
  such that for each $h\in \P$ and $t\in I$, the map $\varphi(h,t):U\rightarrow M$ is
  a proper imbedding.

If $W\subseteq U$, a deformation $\varphi: \P\times I\rightarrow
I(U;M)$ is \emph{modulo W} if $\varphi(h,t)|_{W}=h|_{W}$ for all
$h\in \P$ and $t\in I$.

 Suppose $\varphi: \P\times I \rightarrow
I(U;M)$ and $\psi: \Q\times I \rightarrow I(U;M)$ are deformations
of subsets of $I(U;M)$ and suppose that $\varphi(\P\times
1)\subseteq \Q$. Then the \emph{composition} of $\psi$ with
$\varphi$,  denoted by $\psi \star \varphi$, is the deformation
$\psi \star \varphi :\P\times I\rightarrow I(U;M)$ defined by
\begin{equation*}
\psi \star\varphi(h,t)= \left\{
\begin{array}{lcl}
\varphi(h,2t)& for & t\in [0,1/2]\\
\psi(\varphi(h,1),2t-1)& for &t\in [1/2,1].
\end{array}
\right.
\end{equation*}

The main result of \cite{ed-ki} is the following

\begin{thm}
Let $M$ be a topological manifold and let $U$ be a neighborhood in
$M$ of a compact subset $C$. For any neighborhood $\Q$ of the
inclusion $i:U\subset M$ in $I(U;M)$ there are a neighborhood $\P$
of $i\in I(U;M)$ and a deformation $\varphi:\P\t I\r \Q$ into
$I(U,C;M)$ which is modulo  the complement of a compact
neighborhood of $C$ in $U$ and such that $\varphi(i,t)=i$ for all
$t$. We have also that if $D_i\subset V_i$, $i=1\ld q$, is a
finite family of closed subsets $D_i$ with their neighborhoods
$V_i$, then $\varphi$ can be chosen so that the restriction of
$\varphi$ to $(\P\cap I(U,U\cap V_i;M))\t I$ assumes its values in
$I(U,U\cap D_i;M)$ for each $i$.

Moreover, if $M$ has compact boundary $\pm$ then $\varphi$
restricted to $(\P\cap I(U,\pm\cap U; M))\t I$ takes its values
into $I(U,\pm\cap U; M)$.
\end{thm}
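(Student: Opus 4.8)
The plan is to reduce the assertion to a single \emph{local deformation lemma} on a Euclidean (or half-space) chart and then to patch the local deformations together along a handle decomposition of a compact neighborhood of $C$, following the scheme of \cite{ed-ki}.

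First I would isolate the analytic core. For the model $U=\R^n$, fix a ball $B_0$ and a compact core $C_0\s B_0$; the claim is that for a sufficiently small neighborhood $\P_0$ of the inclusion in $I(\R^n;\R^n)$ there is a deformation $\varphi_0:\P_0\t I\r I(\R^n;\R^n)$, remaining in a prescribed neighborhood, modulo the complement of a fixed compact set, and carrying every $h\in\P_0$ to an embedding equal to $\id$ on $C_0$. This is exactly where the \emph{Kirby torus trick} enters: an embedding $h$ sufficiently close to the inclusion is transported, via the standard covering $\R^n\r T^n$, to a self-immersion of the torus; the torus unknotting / stable homeomorphism theorem together with handle straightening then yields a canonical isotopy back to the identity, and the substance of \cite{ed-ki} is to make this isotopy depend \emph{continuously} on $h$ and to control its support. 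The half-space version, for charts meeting $\pm$, is proved the same way and automatically preserves the boundary.

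Next comes the globalization. Choose a compact neighborhood $N$ of $C$ with $C\s N^o\s N\s U$ and a handle decomposition $N=\bigcup_{k=1}^{p}h_k$, with the handles ordered by non-decreasing index. Arguing by induction on $k$, suppose the embedding has already been deformed to equal $\id$ on a neighborhood of $h_1\cup\dots\cup h_{k-1}$; a chart around $h_k$ and the local lemma, applied modulo the part already fixed, make it equal $\id$ on $h_k$ as well, and I would glue the new deformation onto the old one by means of the composition $\star$. After $p$ steps the embedding equals $\id$ on a neighborhood of $N\supseteq C$, so $\varphi$ takes values in $I(U,C;M)$ and is modulo the complement of $N$, which is a compact neighborhood of $C$ in $U$; shrinking $\P$ at each of the finitely many steps keeps the entire deformation inside the given neighborhood $\Q$ and forces $\varphi(i,t)=i$.

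Finally, the relative conditions $D_i\s V_i$ are secured by a choice made during the induction: whenever a handle meets $V_i$, perform the corresponding local deformation \emph{modulo} $U\cap V_i$. The freedom of the local lemma to be taken modulo a prescribed set then guarantees that an $h$ which is already $\id$ on $U\cap V_i$ is never moved there, so $\varphi$ sends $\P\cap I(U,U\cap V_i;M)$ into $I(U,U\cap D_i;M)$; the boundary clause is the same statement for the distinguished subset $\pm\cap U$. The main obstacle lies entirely in the first step: producing a deformation of embeddings of $\R^n$ that is continuous in the embedding and has controlled support is the genuinely deep input, resting on the torus trick and the manifold topology developed in \cite{ed-ki}. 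Granting it, the handle induction and the relative and boundary refinements are bookkeeping, carried out through repeated use of $\star$ and finitely many shrinkings of $\P$.
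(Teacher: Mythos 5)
There is a preliminary point you could not have known: this paper contains no proof of the statement at all. Theorem 2.2 is quoted as the main result of Edwards and Kirby, with the remark that the first part \emph{is} Theorem 5.1 of \cite{ed-ki} and the boundary clause is Remark 7.2 there. So your sketch must be measured against the Edwards--Kirby proof itself, and there it has a genuine gap at the globalization step. You propose to choose a handle decomposition of a compact neighborhood $N$ of $C$ and induct over its handles. But a compact neighborhood in a \emph{topological} manifold need not admit any handle decomposition: in dimension $4$ such decompositions can fail to exist (Freedman, Quinn), and in high dimensions their existence is a theorem of Kirby--Siebenmann whose proof rests on the very deformation and local-contractibility results being established here, so invoking it would be circular as well as unavailable in 1971. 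Edwards and Kirby instead cover $C$ by finitely many compact sets, each contained in a coordinate chart, and induct over this finite cover; handle decompositions (and the handle lemma proved by the torus trick) are used only \emph{inside} $\R^n$, where a PL structure exists. This is not bookkeeping --- it is exactly what makes the theorem true for arbitrary topological manifolds.

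Second, your mechanism for the relative conditions fails. ``Modulo $U\cap V_i$'' means, by definition, the support condition $\varphi(h,t)=h$ on $U\cap V_i$ for \emph{every} $h\in\P$ and $t$; if $C\cap V_i\neq\emptyset$ (which the statement allows), this contradicts the main requirement that $\varphi(h,1)=\id$ on all of $C$ for every $h\in\P$, not merely for those already standard on $V_i$. What is actually needed, and what Edwards--Kirby build into the handle lemma and verify under the $\star$-compositions, is a naturality property: the canonical deformation carries embeddings that are already the identity on $U\cap V_i$ through embeddings that are the identity on the \emph{shrunk} set $U\cap D_i$ --- the shrinkage $D_i\subset V_i$ is there precisely because one cannot keep all of $V_i$ fixed. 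A lesser but real inaccuracy: the engine of the local lemma is not ``torus unknotting / the stable homeomorphism theorem.'' Edwards--Kirby deliberately avoid the stable homeomorphism theorem (then known only in dimensions $\geq 5$ and resting on surgery theory), and no appeal to it could supply the continuity in $h$ that you correctly identify as the substance. The canonical isotopy comes instead from pulling back by an immersion of the punctured torus, compactifying the pulled-back embedding (possible because it is close to the inclusion), lifting to the universal cover to obtain a bounded homeomorphism, and applying the canonical Alexander-type isotopy from a bounded homeomorphism to the identity; this is what makes the result dimension-free and canonical.
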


The first part coincides with Theorem 5.1\cite{ed-ki}. The second
part is specified in Remark 7.2 in \cite{ed-ki}.

We can derive from Theorem 2.2 the following fragmentation
theorem.
\begin{thm} Let $M$ be a compact manifold, possibly with boundary.
Then the groups $\hm$, $\hdm$, $\P\hm$ and $\P\hdm$ are locally
continuously factorizable, i.e. they satisfy Def. 2.1.
\end{thm}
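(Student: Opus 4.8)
The plan is to derive the local continuous factorization directly from the Edwards–Kirby deformation theorem (Theorem 2.2), using a finite open cover of the compact manifold $M$ by balls and half-balls from $\B$. Fix a finite subcovering $(U_i)_{i=1}^d$ of $\B$ and choose a shrinking, i.e. compact subsets $C_i\subset U_i$ whose interiors still cover $M$. I would then build the factorization inductively, peeling off one chart at a time: given $f$ close to the identity, the first factor $\sigma_1(f)$ should be supported in $U_1$ and agree with $f$ over $C_1$, so that after composing with $\sigma_1(f)^{-1}$ the remaining homeomorphism is already the identity on $C_1$ and we may proceed with the cover of $M\setminus\intt C_1$ by $U_2\ld U_d$.

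The key technical input is to convert the abstract deformation $\varphi$ of imbeddings into the \emph{construction} of the factors $\sigma_i$. First I would observe that a homeomorphism $f\in\hm$ sufficiently close to $\id$ restricts to a proper imbedding of $U_1$ into $M$ lying in any prescribed neighborhood $\Q$ of the inclusion $i_1:U_1\subset M$; this is where one uses the compact-open topology and the compactness of $M$ to control the $C^0$-size of $f$ on the closure of $U_1$. Applying Theorem 2.2 to $U_1\supset C_1$ gives a deformation $\varphi^{(1)}$ of a neighborhood $\P_1$ of $i_1$ into $I(U_1,C_1;M)$ that is modulo the complement of a compact neighborhood of $C_1$. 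Evaluating at the endpoint, $\varphi^{(1)}(f|_{U_1},1)$ equals $f$ off a compact set inside $U_1$ and equals the identity on $C_1$; setting $\sigma_1(f):=f\circ(\text{extension of }\varphi^{(1)}(f|_{U_1},1))^{-1}$ — extended by $f$ outside $U_1$ — produces a homeomorphism supported in $U_1$ and depending continuously on $f$, since $\varphi^{(1)}$ is continuous in its imbedding argument. The residual factor $\sigma_1(f)^{-1}f$ is then the identity on $\intt C_1$, so its support meets only $U_2\ld U_d$, and the induction continues.

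The main obstacle, and the point requiring the most care, is the bookkeeping of supports and the simultaneous verification that \emph{every} factor stays in the correct group among $\hm$, $\hdm$, $\P\hm$, $\P\hdm$. For $\hdm$ and $\P\hdm$ one must keep the factors equal to the identity on $\pm$, which is precisely what the last sentence of Theorem 2.2 guarantees: when $M$ has compact boundary, $\varphi$ restricted to imbeddings fixing $\pm\cap U$ again fixes $\pm\cap U$, so each $\sigma_i(f)$ inherits the boundary-fixing property. For the path versions $\P\hm$ and $\P\hdm$ I would run the same construction pointwise in the path parameter, noting that Theorem 2.2 furnishes $\varphi$ as a \emph{continuous} deformation fixing the inclusion ($\varphi(i,t)=i$), so applying it along a path $\gamma\in\P\hm$ with $\gamma(0)=\id$ yields paths $\sigma_i(\gamma)$ of homeomorphisms that are again based at $\id$ and depend continuously on $\gamma$; the isotopies are compactly supported in $U_i$ by the same modulo-complement argument.

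Finally I would record that the maps $\sigma_i$ are continuous on a neighborhood $\P$ of $\id$: continuity of each $\sigma_i$ follows from continuity of $\varphi^{(i)}$ together with continuity of composition and inversion in $\hm$, and the finitely many inductive steps compose to give the required $\P$ and the identity $f=\sigma_1(f)\ldots\sigma_d(f)$ with $\supp(\sigma_i(f))\s U_i$. The genuinely delicate step is the first one — ensuring that for $f$ near $\id$ the restriction $f|_{U_1}$ lands in the neighborhood $\P_1$ on which Theorem 2.2 applies, uniformly enough that the composition $\sigma_1(f)^{-1}f$ is again near $\id$ so the next step's hypothesis holds — since the neighborhoods shrink at each stage and one must check they can be chosen coherently for all $d$ factors at once.
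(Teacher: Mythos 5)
Your overall plan --- inductively peeling off one chart at a time with the Edwards--Kirby deformation theorem and telescoping the resulting factors --- is the same as the paper's proof of Theorem 2.3, but as written your induction does not close, because nothing in your construction forces the step-$i$ deformation to preserve the identity on the regions cleaned at steps $1\ld i-1$. Concretely: at step $2$ you apply Theorem 2.2 with $C_2\subset U_2$ to the residual $g_1=\sigma_1(f)^{-1}f$, which is the identity on $\intt C_1$. The theorem only gives you an imbedding $e_2=\varphi^{(2)}(g_1|_{U_2},1)$ that equals the identity on $C_2$ and equals $g_1$ outside a compact neighborhood $K_2$ of $C_2$ in $U_2$; on $(\intt C_1\cap K_2)\setminus C_2$ it may move points. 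So the new residual agrees with $e_2$ there and is in general no longer the identity on $C_1\cap U_2$. Its support therefore escapes the set covered by $U_3\ld U_d$, and after $d$ steps the leftover factor is the identity only on $C_d$ plus scraps, not on all of $M$; the claimed identity $f=\sigma_1(f)\cdots\sigma_d(f)$ fails. Your closing paragraph worries about choosing the neighborhoods $\P_i$ coherently, but that is not where the real difficulty lies.

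The missing ingredient is the relative clause of Theorem 2.2 (the statement about closed sets $D_i\subset V_i$), which is exactly what the paper's proof uses: at step $i$ one applies Theorem 2.2 with $C=\cl(U_{i,i})$, $U=U_{i,0}$ and, with $q=1$, $D_1=\bigcup_{\alpha=1}^{i-1}\cl(U_{\alpha,i})$, $V_1=\bigcup_{\alpha=1}^{i-1}U_{\alpha,i-1}$, so that the deformation, restricted to imbeddings that are already the identity on $U\cap V_1$, takes values in imbeddings that remain the identity on $U\cap D_1$. Since each step only preserves the identity on a closed set strictly smaller than the open set on which the input was the identity, this forces the cover to be shrunk $d$ times at the outset (open sets $U_{i,j}$, $j=0\ld d$, with $\bigcup_{i}U_{i,j}=M$ at every level $j$), not just once as in your proposal; after $d$ steps the residual is the identity on $\bigcup_{\alpha=1}^{d}\cl(U_{\alpha,d})=M$ and the telescoping terminates. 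The rest of your outline (gluing the deformed imbedding with $f$ to obtain a factor supported in $U_i$, using closeness to the identity so that the relevant compact set stays inside $U_i$, the boundary clause of Theorem 2.2 for $\H_c^{\p}(M)$, and running the construction in the path parameter for $\P\H_c(M)$ and $\P\H_c^{\p}(M)$) is sound and matches the paper.
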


\begin{proof} (See also \cite{ed-ki}.) We will consider only the case
of $\hm$, the remaining ones being analogous. First we have to
shrink the cover $(U_i)_{i=1}^d$ $d$ times, that is we choose an
open $U_{i,j}$ for every $i=1\ld d$ and $j=0\ld d$ with
$U_{i,0}=U_i$ such that $\bigcup_{i=1}^dU_{i,j}=M$ for all $j$ and
such that $\cl(U_{i,j+1})\s U_{i,j}$ for all $i,j$.  We make use
of Theorem 2.2 $d$ times with $q=1$. Namely, for $i=1\ld d$ we
have a neighborhood $\P_i$ of the identity in
$I(M,\bigcup_{\alpha=1}^{i-1}U_{\alpha,i-1}; M)$ and a deformation
$\phi_i:\P_i\t I\r \H_c(M)$ which is modulo $M\setminus U_{i,0}$
and which takes its values in
$I(M,\bigcup_{\alpha=1}^{i}\cl(U_{\alpha,i}); M)$ and such that
$\phi_i(\id, t)=\id$ for all $t$. Here we apply Theorem 2.2 with
$C=\cl(U_{i,i})$, $U=U_{i,0}$,
$D_1=\bigcup_{\alpha=1}^{i-1}\cl(U_{\alpha,i})$ and
$V_1=\bigcup_{\alpha=1}^{i-1}U_{\alpha,i-1}$. Taking a
neighborhood $\P$ of id small enough, we have that
$\phi_d\star\cdots\star\phi_1$ restricted to $\P\t I$ is well
defined. For every $h\in\P$ we set $h_0=h$ and
$h_i=\phi_i\star\cdots\star\phi_1(h,1)$, $i=1\ld d$. It follows
that $h_d=\id$ and $h=\prod_{i=1}^dh_ih_{i-1}^{-1}$. It suffices
to define $\sigma_i:\P\r\H_c(M)$ by $\sigma_i(h)=h_ih_{i-1}^{-1}$
for all $i$.
\end{proof}

\begin{cor}\label{isot}
Let $h_{t}:M \rightarrow M$, $t \in I$, be an isotopy of a compact
manifold $M$ with $h_0=\id$, and let $(U_{i})_{i=1}^d$ be an open
cover of $M$. Then $h_{t} $ can be written as a composition of
isotopies $h_{t}=h_{k,t}h_{k-1,t}\ldots h_{1,t}$, where each
isotopy $h_{j,t}: M \rightarrow M$  is supported by some  $U_{i}$.
Moreover, if $h_t|_{\pm}=\id$ for all $t$, then
$h_{j,t}|_{\pm}=\id$ for all $j$ and $t$. The same is true for
homeomorphisms instead of isotopies.
\end{cor}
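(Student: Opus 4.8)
The plan is to derive this corollary directly from the factorization result just established in Theorem 2.4 (the local continuous factorizability of the path groups $\P\hm$), viewing an isotopy $h_t$ with $h_0=\id$ precisely as an element of $\P\hm$. First I would reduce from a global isotopy to small ones by a subdivision of the time interval. Concretely, since $h_t$ is a continuous path starting at $\id$, its image is a compact subset of $\hm$; hence I can choose a partition $0=t_0<t_1<\cdots<t_N=1$ fine enough that each ``increment'' $h_{t}h_{t_{l-1}}^{-1}$ for $t\in[t_{l-1},t_l]$ stays inside the neighborhood $\P$ of the identity supplied by Def. 2.1 applied to the given cover $(U_i)_{i=1}^d$. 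This is where the continuity of the path and compactness of $I$ are used to guarantee the increments are small.

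Next I would apply the factorization. On each time subinterval the short path $t\mapsto h_{t}h_{t_{l-1}}^{-1}$ lands in $\P$, so the continuous maps $\sigma_i$ from Def. 2.1 split it as a product $\sigma_1\cdots\sigma_d$ with $\supp(\sigma_i)\s U_i$. Because the $\sigma_i$ are continuous in the homeomorphism, composing with the path $t\mapsto h_{t}h_{t_{l-1}}^{-1}$ produces, for each factor index $i$, a continuous family of homeomorphisms supported in $U_i$; concatenating these pieces over the successive time subintervals $[t_{l-1},t_l]$ (and reparametrizing time on each piece) yields isotopies each supported in a single $U_i$. Collecting all the factor-pieces across all subintervals gives the desired decomposition $h_t=h_{k,t}h_{k-1,t}\cdots h_{1,t}$ with $k=Nd$, each $h_{j,t}$ an isotopy supported in some $U_i$.

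For the boundary refinement, I would invoke the same factorization but for the group $\P\hdm$ rather than $\P\hm$, which Theorem 2.4 also covers. If the original isotopy satisfies $h_t|_{\pm}=\id$, then each increment lies in $\hdm$, so the factors $\sigma_i$ produced are themselves in $\hdm$, whence $h_{j,t}|_{\pm}=\id$ for all $j,t$; here the boundary-preserving clause in Theorem 2.2 (used to prove Theorem 2.4) is what makes the factors fix $\pm$. The final assertion ``the same is true for homeomorphisms instead of isotopies'' follows by evaluating the constructed isotopies at $t=1$: writing $h_j=h_{j,1}$ gives $h=h_k\cdots h_1$ with $\supp(h_j)\s U_i$, which is exactly the ordinary (non-parametrized) fragmentation.

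I expect the main obstacle to be the bookkeeping in the concatenation step: one must check that gluing the factor-isotopies across consecutive time subintervals yields a genuinely continuous family in $t$ (the endpoints must match, and the reparametrizations must be chosen compatibly), and that the running product of all the pieces up to a given time reconstructs $h_t$ exactly. This is routine but requires care in indexing, analogous to the telescoping product $h=\prod_{i=1}^d h_i h_{i-1}^{-1}$ appearing in the proof of Theorem 2.4, now carried out simultaneously in the time variable.
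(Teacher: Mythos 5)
Your proposal is correct and follows exactly the route the paper intends: Corollary 2.4 is stated there without proof as an immediate consequence of the factorization theorem (Theorem 2.3, which you cite as ``Theorem 2.4''), applied to the path groups $\P\hm$ and $\P\hdm$, and your time-subdivision argument merely fills in the standard details (small increments lie in the neighborhood $\P$, apply the continuous factors $\sigma_i$, telescope) that the paper leaves implicit. The only cosmetic point is the reference numbering and the harmless refinement of an arbitrary open cover of the compact $M$ by a finite cover of balls/half-balls before invoking Definition 2.1.
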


Another important consequence of Theorem 2.2 is the following
Isotopy Extension Theorem.
\begin{thm} \cite{ed-ki}
Let $f_t$ be an isotopy in $\H(M)$ and let $C\s M$ be a compact
set. Then for any open neighborhood $U$ of the track of $C$ by
$f_t$ given by $\bigcup_{t\in [0,1]}f_t(C)$ there is an isotopy
$g_t$ in $\H_c(M)$ such that $g_t=f_t$ on $C$ and $\supp(g_t)\s
U$.
\end{thm}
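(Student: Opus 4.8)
The plan is to deduce the statement from the Edwards--Kirby deformation theorem (Theorem 2.2) by lifting the path of embeddings $t\mapsto f_t$ restricted to a thin neighbourhood of $C$, using a subdivision of the time interval $I$. Since in the present conventions isotopies are based at the identity, we have $f_0=\id$. First I would set up the geometry. The track $K=\bigcup_{t\in I}f_t(C)$ is the image of the compact set $C\t I$ under the continuous map $(x,t)\mapsto f_t(x)$, hence is compact and contained in $U$. By the tube lemma applied to this map I can choose nested neighbourhoods $C\s V\s\overline V\s\hat V$ of $C$, with $\overline{\hat V}$ compact, so thin around $C$ that $f_t(\overline{\hat V})\s U$ for every $t\in I$. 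It then suffices to build $g_t\in\hm$ with $g_t=f_t$ on $\overline V$ (in particular on $C$) and $\supp(g_t)\s\bigcup_{s}f_s(\overline{\hat V})\s U$.

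Next I would fix a partition $0=t_0<t_1<\ldots<t_N=1$ of $I$ and construct $g_t$ inductively over the subintervals $[t_{k-1},t_k]$, keeping the inductive hypothesis that $g_{t_{k-1}}\in\hm$ agrees with $f_{t_{k-1}}$ on $\overline V$ and is supported in $\bigcup_sf_s(\overline{\hat V})$. The device is to re-base at the identity: on $[t_{k-1},t_k]$ put $h_t:=g_{t_{k-1}}^{-1}\ci f_t$, so that $h_{t_{k-1}}=\id$ on $\overline V$ and, if the partition is fine enough, $t\mapsto h_t$ restricted to $\hat V$ is a short path of embeddings lying in the neighbourhood $\P$ of the inclusion furnished by Theorem 2.2 (applied with the role of the compact set played by $\overline V$ and of its neighbourhood by $\hat V$). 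From Theorem 2.2 I would extract an ambient isotopy $\kappa_t\in\hm$ with $\kappa_{t_{k-1}}=\id$, $\supp(\kappa_t)\s\hat V$, and $\kappa_t=h_t$ on $\overline V$, and then set $g_t:=g_{t_{k-1}}\ci\kappa_t$. A direct computation gives $g_t=f_t$ on $\overline V$ and continuity across each node $t_k$; and since $\supp(g_t)\s\supp(g_{t_{k-1}})\cup\supp(\kappa_t)$ with $\hat V\s f_0(\overline{\hat V})$, the support condition is preserved. Concatenating the $N$ pieces and reparametrising yields $g_t$ on all of $I$.

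The main obstacle is the local step, i.e. extracting from Theorem 2.2 the ambient, compactly supported isotopy $\kappa_t$ realising on $\overline V$ the short embedding path $h_t$. Theorem 2.2 is a statement purely about deforming embeddings of $\hat V$: it moves a near--inclusion embedding to one that coincides with the inclusion on $\overline V$ while remaining \emph{equal to the given embedding} off a fixed compact neighbourhood of $\overline V$. By itself this is only a deformation inside a space of embeddings, not a homeomorphism of $M$, and the ``modulo the complement'' clause matches the input rather than the identity there, so an extension by $\id$ is not immediate; making the passage from the embedding--level deformation to a genuine ambient isotopy supported in $\hat V$ is exactly the technical heart inherited from \cite{ed-ki}. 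I would carry it out by feeding the path $h_t$ into the deformation of Theorem 2.2 and interpreting the resulting family, normalised to be stationary near the frontier of $\hat V$, as an ambient motion: invariance of domain forces a near--inclusion embedding that is stationary near that frontier to have image $\hat V$, so it glues with $\id$ on $M\setminus\hat V$ to an element of $\hm$, and one then checks joint continuity in the two parameters. When $\p M\neq\emptyset$ the final clause of Theorem 2.2 keeps each $\kappa_t$ compatible with the boundary.
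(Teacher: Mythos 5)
Your overall strategy (deduce the statement from Theorem 2.2 by subdividing time and converting embedding deformations into ambient homeomorphisms) is the right one --- it is how Edwards--Kirby themselves prove this, and the paper gives no proof, only the citation --- but your inductive scheme has a fatal flaw in where the supports live. All your correction homeomorphisms $\kappa_t$ are supported in the fixed thin neighbourhood $\hat V$ of $C$, hence so is every composition $g_t$. A compactly supported homeomorphism maps its support onto itself, so $\supp(g_t)\s\hat V$ together with $g_t=f_t$ on $\overline V$ forces $f_t(\overline V)\s\hat V$ for all $t$ --- which is false in general, since the track $\bigcup_t f_t(C)$ need not stay in any preassigned small neighbourhood of $C$ (take $f_t$ a compactly supported isotopy translating $C$ far across $M$). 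Concretely, the step that breaks is ``if the partition is fine enough, $h_t|_{\hat V}$ lies in $\P$''. First, for $k\geq 2$ the embedding $h_{t_{k-1}}=g_{t_{k-1}}^{-1}\ci f_{t_{k-1}}$ is the identity only on $\overline V$, not on $\hat V$, so it need not lie in any small compact-open neighbourhood of the inclusion of $\hat V$; the domain would have to shrink at every step. Second, and worse, the mesh needed at step $k$ depends on the already-constructed $g_{t_{k-1}}$, not only on $f$, and it degenerates exactly when $f_t(C)$ approaches the frontier of $\hat V$: in the translation example the admissible subdivision points accumulate at the exit time, so the induction never reaches $t=1$. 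No partition fixed in advance can work, because the conclusion it would produce is self-contradictory.

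The repair is to rebase on the \emph{left} and let the supports travel with the track. Fix an open $W$ with $K:=\bigcup_t f_t(C)\s W\s\overline W\s U$, $\overline W$ compact, and a compact neighbourhood $K'$ of $K$ in $W$; apply Theorem 2.2 once, with domain $W$ and compact set $K'$. Your gluing idea then yields a continuous local section $h\mapsto\lambda(h)\in\H_c(M)$ with $\lambda(h)=h$ on $K'$, $\lambda(i)=\id$ and $\supp(\lambda(h))\s W$; the precise object to glue with the identity is $h\ci\varphi(h,1)^{-1}$ (which \emph{is} the identity off a compact subset of $W$, by the ``modulo'' clause), not the deformed embedding $\varphi(h,1)$ itself. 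Since $(s,t)\mapsto(f_t\ci f_s^{-1})|_W\in I(W;M)$ is continuous and equals the inclusion on the diagonal, compactness of $I\t I$ gives one mesh $\delta>0$, independent of the construction, with $(f_t\ci f_s^{-1})|_W\in\P$ whenever $|t-s|\leq\delta$. On $[t_{k-1},t_k]$ set $\lambda_t:=\lambda\bigl((f_t\ci f_{t_{k-1}}^{-1})|_W\bigr)$ and $g_t:=\lambda_t\ci g_{t_{k-1}}$. Then for $x\in C$ one has $g_t(x)=\lambda_t(f_{t_{k-1}}(x))=f_t(x)$ because $f_{t_{k-1}}(x)\in K\s K'$; the agreement set never shrinks, $\supp(g_t)\s W\s U$, and continuity across the nodes holds since $\lambda_{t_{k-1}}=\lambda(i)=\id$. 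The boundary-respecting clause of Theorem 2.2 carries through this argument unchanged.
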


\section{Perfectness of $\H_c(M)$ and $\H^{\p}_c(M)$}

The goal of this section is to give the proof of Theorem 1.1. We
begin with the following fact, with a straightforward proof, which
plays a basic role in studies on homeomorphism groups.
\begin{lem}\cite{Mat71}(Basic lemma)
Let $B\s M$ be a ball and $U\s M$ be an open subset such that
$\overline{B}\s U$. Then there are $\phi\in\H_U(M)$ and a
homomorphism $S:\H_B(M)\r \H_U(M)$ such that $h=[S(h),\phi]$ for
all $h\in\H_B(M)$.

\end{lem}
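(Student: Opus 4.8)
The plan is to run the classical infinite-swindle argument of Mather. First I would exploit the hypothesis $\overline{B}\s U$ to build the displacing homeomorphism: choose a point $p\in U$ and a homeomorphism $\phi\in\H_U(M)$, supported in a compact subset of $U$, together with the balls $B_i:=\phi^i(B)$ for $i\geq 0$ (so $B_0=B$), arranged so that the $\overline{B_i}$ are pairwise disjoint and shrink down to $p$ as $i\r\infty$. Concretely one picks a small embedded tube inside $U$ running from $\overline{B}$ toward $p$ and lets $\phi$ push $B$ steadily along it, so that iterating $\phi$ produces the required shrinking sequence of disjoint copies; since $\phi$ is built by such a push it is isotopic to the identity and hence lies in $\H_U(M)$.

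With $\phi$ fixed, I would define the homomorphism by the rule $h_i:=\phi^i h\phi^{-i}$, which is supported in $B_i$, and set $S(h):=\prod_{i=0}^{\infty}h_i=h_0h_1h_2\cdots$. Since the supports $B_i$ are pairwise disjoint this infinite product makes sense pointwise; the one genuine analytic point is continuity of $S(h)$ at $p$, which holds because $B_i$ collapses to $p$, forcing $S(h)$ to fix $p$ and to move points lying in $B_i$ only within the shrinking set $B_i$. The support of $S(h)$ is contained in $\bigcup_i\overline{B_i}\cup\{p\}\s U$, which is compact, so $S(h)$ is compactly supported in $U$; applying the same construction to a path joining $\id$ to $h$ inside $\H_B(M)$ yields a path joining $\id$ to $S(h)$, so $S(h)$ lies in the identity component $\H_U(M)$. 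The homomorphism property $S(hg)=S(h)S(g)$ is then immediate from disjointness of supports together with the identity $(hg)_i=h_ig_i$.

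Finally I would verify the commutator relation by an index-shift computation. Conjugation by $\phi$ sends $h_i$ to $h_{i+1}$, since $\phi h_i\phi^{-1}=\phi^{i+1}h\phi^{-(i+1)}=h_{i+1}$; hence $\phi S(h)\phi^{-1}=\prod_{i=1}^{\infty}h_i$. Because all factors have pairwise disjoint supports, $S(h)=h_0\cdot\prod_{i=1}^{\infty}h_i$, so that
\[
[S(h),\phi]=S(h)\,\phi S(h)^{-1}\phi^{-1}=\Big(\prod_{i=0}^{\infty}h_i\Big)\Big(\prod_{i=1}^{\infty}h_i\Big)^{-1}=h_0=h,
\]
which is the desired identity.

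The main obstacle is the geometric input of the first paragraph: producing a homeomorphism $\phi\in\H_U(M)$ realizing an honest infinite shift of disjoint balls accumulating at a single point, and confirming that the resulting infinite product $S(h)$ is continuous, indeed compactly supported, at the accumulation point. Once $\phi$ and the shrinking sequence $(B_i)$ are in hand, both the homomorphism property and the commutator identity follow purely formally from the disjointness of the supports.
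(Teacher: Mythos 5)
Your proposal is correct and follows essentially the same route as the paper: Mather's infinite-swindle, with a sequence of disjoint balls $B_i$ shrinking to a point $p$, the homomorphism $S(h)=\prod_{i\geq 0}\phi^i h\phi^{-i}$ defined blockwise on the $B_i$, and the index-shift computation giving $h=[S(h),\phi]$. The only cosmetic difference is order of construction: the paper first chooses the disjoint ball sequence inside a larger ball $B'$ (with $p\in\p_{B'}$) and then produces $\phi$ mapping each ball to the next via transitivity of $\H_U(M)$ on balls, whereas you build $\phi$ directly as a push along a tube and set $B_i=\phi^i(B)$ --- the substance is identical.
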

\begin{proof} First choose a larger ball $B'$ such that $\overline{B}\s
B'\s\overline{B'}\s U$. Next, fix $p\in\p_{B'}$ and set $B_0=B$.
There exists a sequence of balls $(B_k)_{k=1}^{\infty}$ such that
$\cl(B_k)\s B'$ for all $k$, where the family
$(B_k)_{k=0}^{\infty}$ is pairwise disjoint, locally finite in
$B'$, and $B_k\r p$ as $k\r\infty$. Choose a homeomorphism
$\phi\in\H_U(M)$ such that $\varphi(B_{k-1})=B_k$ for
$k=1,2,\ldots$. Here we use the fact that $\H_U(M)$ acts
transitively on the family of balls in $B'$, c.f. \cite{hir}.

Next we define a homomorphism $S:\H_B(M)\r\H_U(M)$ by the formula
$$ S(h)=\phi^kh\phi^{-k}\quad\hbox{on}\,B_k,\ k=0,1,\ldots$$
and $S(h)=\id$ outside $\bigcup_{k=0}^{\infty}B_k$. It is clear
that $h=[S(h),\phi]$, as required.
\end{proof}

The above reasoning appeared in Mather's paper \cite{Mat71}.
Actually Mather proved also the acyclicity of $\H(\R^n)$. It is
easily seen that \cite{Mat71} and Lemma 3.1 are no longer true for
$C^1$ homeomorphisms. However, Tsuboi gave an excellent
improvement of this reasoning and adapted it for
$C^r$-diffeomorphisms with small $r$, see \cite{tsu1}.

\begin{cor} Assume that either
\begin{enumerate}\item $\pm\neq\emptyset$ with $\dim M\geq 2$, and $B, U\s
M$ are  such that $B$ is a half-ball, and $U$ is open with
$\overline{B}\s U$; or \item $M=N\t\R$, where $N$ is a manifold,
and $B=N\t I$, $U=N\t J$ where $I, J\s\R$ are open intervals with
the closure of $I$ contained in $J$.\end{enumerate} Then there are
$\phi\in\H_U(M)$ and a homomorphism $S:\H_B(M)\r \H_U(M)$ such
that $h=[S(h),\phi]$ for all $h\in\H_B(M)$. Moreover, in the case
(1), if $h\in\H_B(M)$ satisfies $h=\id$ on $\pm$ then $S(h)=\id$
on $\pm$.
\end{cor}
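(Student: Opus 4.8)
The plan is to follow the proof of Lemma 3.1 verbatim, with the ball $B$ replaced by a half-ball in case (1) and by a product slab in case (2). In each case it suffices to produce a point $p$, a pairwise disjoint and locally finite family $(B_k)_{k=0}^{\infty}$ of sets of the same type as $B$, with $B_0=B$, $\cl(B_k)\s U$ and $B_k\r p$, together with a homeomorphism $\phi$ supported in $U$ such that $\phi(B_{k-1})=B_k$ for all $k\geq 1$. Given these, one defines $S(h)=\phi^kh\phi^{-k}$ on $B_k$ and $S(h)=\id$ off $\bigcup_kB_k$, just as in Lemma 3.1. Disjointness of the $B_k$ makes $S$ a homomorphism; any $h\in\H_B(M)$ has compact support $K\s B$, so $\supp S(h)=\bigcup_k\phi^k(K)$ lies in a compact subset of $U$ and accumulates only near $p$, whence $S(h)\in\H_U(M)$; and the computation of Lemma 3.1 gives $h=[S(h),\phi]$.

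For case (2) the required data are elementary. With $\cl(I)\s J$, choose $p\in J\setminus\cl(I)$ and pairwise disjoint open intervals $I_k\s J$ satisfying $I_0=I$, $\cl(I_k)\s J$ and $I_k\r p$, and take $\psi\in\H_J(\R)$ with $\psi(I_{k-1})=I_k$ for all $k$ and $\psi(p)=p$; such a shift toward $p$ is supported in a compact part of $J$. Then $B_k=N\t I_k$ and $\phi=\id_N\t\psi$ do the job.

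Case (1) is the substantial one, and it is where $\dim M\geq 2$ is used. Fix a larger half-ball $B'$ with $\cl(B)\s B'\s\cl(B')\s U$ and work in a boundary chart modeled on $\R^n_+$. The flat face of $B$ lies in $\pm$, and because $\dim\pm=n-1\geq 1$ there is room to slide $B$ along the boundary: one places pairwise disjoint half-balls $B_k$, each attached to $\pm$ and with $\cl(B_k)\s B'$, accumulating at a boundary point $p\in\p_{B'}\s\pm$, and lets $\phi$ be a homeomorphism supported in $U$ effecting the shifts $B_{k-1}\mapsto B_k$. I expect this to be the main obstacle: constructing $\phi$ and verifying local finiteness of the $B_k$ near $p$. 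Its existence rests on the transitivity of the homeomorphism group on sub-half-balls attached to the boundary, the boundary analogue of the homogeneity invoked in Lemma 3.1 (cf. \cite{hir}). In dimension one the construction breaks down, since a half-ball is then a half-open interval and no disjoint family attached to the boundary can accumulate at a boundary point; this is precisely why case (1) assumes $\dim M\geq 2$.

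The moreover clause is then immediate. Every homeomorphism of $M$ carries $\pm$ onto $\pm$, so $\phi^{-k}$ maps $\pm\cap B_k$ into $\pm\cap B_0$; hence if $h=\id$ on $\pm$, then for $x\in\pm\cap B_k$ we have $S(h)(x)=\phi^kh\phi^{-k}(x)=x$, while $S(h)=\id$ on the part of $\pm$ lying off $\bigcup_kB_k$. Therefore $S(h)=\id$ on $\pm$.
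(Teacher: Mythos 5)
Your proposal coincides with the paper's own treatment: the paper disposes of this corollary with the single sentence ``The proof is analogous to that of Lemma 3.1,'' and your write-up is exactly that analogy carried out in detail --- pairwise disjoint translates $B_k$ of $B$ (half-balls slid along $\pm$ in case (1), which is where $\dim M\geq 2$ enters, slabs $N\t I_k$ in case (2)) shrinking toward an accumulation point and shifted one to the next by $\phi\in\H_U(M)$, the same formula $S(h)=\phi^kh\phi^{-k}$ on $B_k$, and the invariance of $\pm$ under homeomorphisms giving the ``moreover'' clause. The one caveat is a defect of the statement rather than of your argument, and the paper's proof shares it: in case (2) your $\phi=\id_N\t\psi$ has compact support --- as membership in $\H_U(M)$ requires --- only when $N$ is compact, and indeed no compactly supported $\phi$ can satisfy $\phi(N\t I_{k-1})=N\t I_k$ when $N$ is noncompact (a point of $N\t I_{k-1}$ outside $\supp(\phi)$ would be fixed, yet $I_{k-1}\cap I_k=\emptyset$); this is harmless for the paper, which applies case (2) only with $N$ compact ($N=\pm$ compact in Theorems 1.2 and 1.5, and $N$ a point in the $n=1$ case of Theorem 1.1).
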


The proof is analogous to that of Lemma 3.1.

Suppose that  $\{U_i\}_{i\in \N}$ is a pairwise disjoint, locally
finite family of open sets of $M^o$. Put $U=\bigcup_iU_i$. Let
$\H_{[U]}(M)$ (resp. $\H_{[U]}^{\p}(M)$) denote the group of all
homeomorphisms from $\hm$ (resp. $\hdm$) supported in $U$ such
that for the decomposition $h=h_1h_2\ldots$ resulting from the
partition $U=\bigcup_iU_i$ one has $h_i\in\H_{U_i}(M)$ for all
$i$.
\begin{cor}
Let $\overline{B_i}\s U_i$, $i\in \N$, and let the pair
$(B_i,U_i)$ be such as in Lemma 3.1 or Corol. 3.2.  Then any
element
 $h\in\H_{[B]}(M)$ (where $B=\bigcup_iB_i$) is expressed as $h=\tilde h\bar h$, where
  $\tilde h,\bar h\in\H_{[U]}(M)$. Moreover, we can arrange so that if
  $h\in\H_{[B]}^{\p}(M)$ then $\tilde h,\bar h\in\H_{[U]}^{\p}(M)$.
\end{cor}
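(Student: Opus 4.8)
The plan is to apply Lemma 3.1 (or Corollary 3.2) separately to each pair $(B_i,U_i)$ and then to assemble the resulting commutator factorizations into a single commutator, exploiting that the $U_i$ are pairwise disjoint. For each $i$ choose, according to Lemma 3.1 or Corollary 3.2, an element $\phi_i\in\H_{U_i}(M)$ and a homomorphism $S_i:\H_{B_i}(M)\r\H_{U_i}(M)$ with $g=[S_i(g),\phi_i]$ for every $g\in\H_{B_i}(M)$. Given $h\in\H_{[B]}(M)$, write $h=\prod_i h_i$ with $h_i\in\H_{B_i}(M)$. Since $h$ is compactly supported and $\{U_i\}$ is locally finite, only finitely many $h_i$ differ from $\id$, say for $i$ in a finite set $F$; and $S_i(h_i)=\id$ whenever $h_i=\id$, as $S_i$ is a homomorphism.

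First I would check that the products $\Phi=\prod_{i\in F}\phi_i$ and $T=\prod_{i\in F}S_i(h_i)$ are genuine elements of $\H_{[U]}(M)$. Each factor is compactly supported in the corresponding $U_i$, and these supports are pairwise disjoint, so both products are compactly supported in $U$ and restrict on each $U_i$ to an element of $\H_{U_i}(M)$. Concatenating the disjointly supported isotopies joining each $\phi_i$, resp. $S_i(h_i)$, to $\id$ within $\H_{U_i}(M)$ shows that $\Phi,T\in\hm$ lie in the identity component and decompose as required, i.e. $\Phi,T\in\H_{[U]}(M)$.

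The key algebraic step is that, because the $U_i$ are pairwise disjoint, an element carried by $U_i$ commutes with one carried by $U_j$ for $i\neq j$; hence factors with distinct indices can be regrouped and
\[
h=\prod_{i\in F}h_i=\prod_{i\in F}[S_i(h_i),\phi_i]=\Bigl[\prod_{i\in F}S_i(h_i),\ \prod_{i\in F}\phi_i\Bigr]=[T,\Phi].
\]
Writing $[T,\Phi]=(T\Phi T^{-1})\Phi^{-1}$ and putting $\tilde h=T\Phi T^{-1}$, $\bar h=\Phi^{-1}$ yields the desired factorization with $\tilde h,\bar h\in\H_{[U]}(M)$, since $\H_{[U]}(M)$ is a group.

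For the moreover part, suppose $h\in\H_{[B]}^{\p}(M)$. Under the standing hypothesis $U_i\s M^o$ every factor above is supported away from $\pm$, and the connecting isotopies used above, being supported in the $U_i\s M^o$, fix $\pm$ throughout; thus $\Phi,T\in\hdm$ and so $\tilde h,\bar h\in\H_{[U]}^{\p}(M)$. I expect this boundary bookkeeping to be the only delicate point. The reason the statement admits pairs of type (1) in Corollary 3.2 is that, should a chosen $U_i$ meet $\pm$, the element $\Phi$ need no longer fix $\pm$; one then replaces the factorization by $\tilde h=T$, $\bar h=\Phi T^{-1}\Phi^{-1}$. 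Here $\tilde h$ fixes $\pm$ by the final clause of Corollary 3.2 (each $S_i(h_i)=\id$ on $\pm$ because $h_i=\id$ on $\pm$), while $\bar h$ fixes $\pm$ because $\Phi$ maps $\pm$ onto itself and $T^{-1}$ fixes $\pm$; the remaining care is to choose the connecting isotopies to respect $\pm$ so that both factors land in $\H_{[U]}^{\p}(M)$.
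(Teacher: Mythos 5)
Your proposal fails at its very first reduction. You claim that, because $h$ is compactly supported and $\{U_i\}$ is locally finite, only finitely many of the factors $h_i$ differ from the identity. But in the setting of this corollary the family $\{U_i\}$ is pairwise disjoint and locally finite only \emph{in} $M^o$: it may accumulate on the boundary $\pm$, and compactness of $\supp(h)$ then does not force the factors to be eventually trivial. This is not a borderline case but precisely the situation the corollary is designed for: in the proofs of Theorem 1.2 and of Theorem 1.1 for $n=1$, $\pm\neq\emptyset$, it is applied with $M=\p\t\R_+$, $B_k=A_k=\p\t(a_k,b_k)$ and $U_k=\p\t(\tilde a_k,\tilde b_k)$, where all these numbers tend to $0$; the homeomorphism $h$ constructed there agrees with an arbitrary $f$ on infinitely many collars, so it has infinitely many nontrivial factors $h_k$, supported in sets accumulating at $\p\t\{0\}$. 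Under your finiteness assumption the corollary degenerates to a statement about finite products of disjointly supported homeomorphisms, which is too weak to serve these applications. Everything built on your finite index set $F$ --- the products $\Phi=\prod_{i\in F}\phi_i$ and $T=\prod_{i\in F}S_i(h_i)$, and the concatenation of finitely many isotopies --- collapses with it.

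What the paper actually does is glue infinitely many pieces: $\Phi:=\phi_i$ on each $U_i$ and $\Phi:=\id$ off $\bigcup_iU_i$, and $T:=S_i(h_i)$ on each $U_i$, $T:=\id$ off $\bigcup_iU_i$; then, by the same commutator regrouping you use (which is correct as algebra), $h=[T,\Phi]=\bigl(T\Phi T^{-1}\bigr)\Phi^{-1}$, and the connecting isotopies are glued in the same way. The genuine mathematical content, which your proposal never confronts, is that these infinite gluings are continuous at the points of $\pm$ (more generally of $\cl(U)\setminus U$) where the $U_i$ accumulate, so that $T$, $\Phi$ and the glued isotopies really are elements of $\H_{[U]}(M)$, respectively $\H^{\p}_{[U]}(M)$. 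It is here that the specific shape of the pairs $(B_i,U_i)$ from Lemma 3.1 and Corollary 3.2 enters: for instance, in case 3.2(2) the $\phi_i$ have product form $\id_N\t\phi_i'$, so they displace points only inside the shrinking $\R$-factor of $U_i$, and the conjugates $S_i(h_i)=\phi_i^kh_i\phi_i^{-k}$ have displacements controlled by those of $h_i$, which tend to $0$ uniformly in $i$ because $h$ itself is continuous and fixes $\pm$. A correct write-up must perform this infinite gluing and verify its continuity; without that, what you prove is strictly weaker than the stated corollary.
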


In fact, we can glue together $S(h_i)$ and $\phi_i$ obtained for
particular $U_i$.

\medskip
\noindent\emph{Proof of Theorem 1.1 for $n>1$ or $\pm=\emptyset$}.
For $M$ compact it follows from Corol. 2.4, Lemma 3.1,  Corol.
3.2(1) and, for $\pm \neq\emptyset$ and $\dim M=1$, from the fact
that $\H([0,1])$ is perfect. The proof of the latter fact will
follow from the proof of Theorem 1.2 below.  Suppose now that $M$
admits a compact exhaustion. If $h\in\H_c(M)$ then there are
$j\in\N$ and an isotopy $h_t$ such that $h_0=\id$, $h_1=h$, and
$\supp(h_t)\s M_j$ for all $t$. In view of Corol. 2.2 it follows
that $h|_{M_j}$ can be written  as $h|_{M_j}=h_d\ldots h_1$  such
that $h_i\in\H_{B_i}(M_j)$, where $B_i$ is a ball or half-ball of
$M_i$ for $i=1\ld d$. Moreover, we have $h_i=\id$ on $\p_{M_j}$
for all $i$. Then due to Corol. 3.2(1) we have
$h_i=[S_i(h_i),\phi_i]$ and $S_i(h_i)=\id$ on $\p_{M_j}$ for each
$i$. Is is easily seen that $\phi_i$ may be defined as an element
of $\H_c(M)$ supported in the interior of $M_{j+1}$. Thus
extending each $h_i$ to $M$ by putting $h_i=\id$ off $M_j$, the
perfectness of $\H_c(M)$ follows. \quad$\square$
\medskip

\noindent\emph{Proof of Theorem 1.1 for $n=1$ and
$\pm\neq\emptyset$, and of Theorem 1.2}. Let $M$ be a manifold
with boundary $\p$.  By a  \emph{collar} neighborhood of $\p$ we
mean a set $P=\p\t[0,1]$ embedded in $M$, where
 $\p\t\{0\}$ identifies with $\p$. It is well-known that such a
 neighborhood exists.

 In the case of 1.1 we have $\p=\{0\}$. In view of
Theorem 2.3 it suffices to consider $\hr$, where
$\R_+=[0,\infty)$. For any $f\in \hr$ there is a sequence of reals
from (0,1)
\begin{equation}1>b_1>\bar b_1>\bar a_1>a_1>b_2>\ldots>b_k>\bar
b_k>\bar a_k>a_k>\ldots>0,\end{equation} tending to 0, and $h\in
\hr$ such that
 \begin{equation}h=f\quad\hbox{ on}\quad\p\t\quad\bigcup_{k=1}^{\infty} [\bar a_k,\bar b_k].
\end{equation}
Moreover, setting $A_k:=\p\t(a_k,b_k)$ and
$A:=\bigcup_{k=1}^{\infty}A_k$, we may also have that
\begin{equation}\supp(h)\s A,\end{equation} and that
 for the  decomposition $h=h_1h_2\ldots$ resulting from the
partition  $A=\bigcup_{k=1}^{\infty}A_k$ and from (3.3) we have
\begin{equation}h_k\in \H_{A_k}(\p\t\R_+)\quad\hbox{for all}\;k.
\end{equation}

The condition (3.4) means that we exclude any twisting of $h_k$.

 In order to show the above statements we apply Theorem 2.5 for
 $M^o$.
This enables us to define recurrently $b_k>\bar b_k>\bar a_k>a_k$
and $h|_{\p\t[a_k,b_k]}$ for $k=1,2,\ldots$. In fact, let $f_t$ be
an isotopy in $\hr$ connecting $f$ with the identity. Suppose we
have defined $1>b_1>\ldots> a_{k-1}$ and $g\in\hr$ such that
$g=f\quad\hbox{on}\quad\p\t\bigcup_{i=1}^{k-1} [\bar a_i,\bar
b_i]$, $\supp(g)\s\bigcup_{i=1}^{k-1}A_i$, and
$g_i\in\H_{A_i}(\p\t\R_+)$ for all $i\leq k-1$. Now it suffices to
take $a_{k-1}>b_k>\bar b_k>\bar a_k>a_k$ in such a way that
$\p\t(0,b_k]$ is disjoint with
$\bigcup_{t\in[0,1]}f_t^{-1}(\p\t[a_{k-1},1])$. In view of Theorem
2.5 we get an isotopy $\bar h_t$ such that $\bar h_t=f_t$ on
$\p\t[\bar a_k,\bar b_k]$ and $\supp(\bar h_t)\s\p\t( a_k, b_k)$.
Next we define $h$ on $\p\t[a_k,1]$ by gluing together $g$ and
$\bar h_1$. Continuing this procedure we define
$h\in\H^{\p}(\p\t\R_+)$ fulfilling  (3.2), (3.3) and (3.4). Here
we put $h(x,0)=(x,0)$ for all $x\in\p$.

Next we set $h':=h^{-1}f$, that is $f=hh'$. It follows that $h'$
also enjoys the properties (3.2), (3.3) and (3.4) with a suitably
chosen sequence similar to (3.1).

Let $U_k= (\tilde a_k,\tilde b_k)$, where $\tilde a_k,\tilde
b_k\in(0,1)$, $k=1,2,\ldots$, are such that $\tilde a_{k-1}>\tilde
b_k>b_k>a_k>\tilde a_k$ for all $k$ ($\tilde b_0=1$). Now in view
of Corollaries 3.2 and 3.3 with $M=\p\t\R_+$ $h$ belongs to the
commutator subgroup of the group $\H_{[U]}^{\p}(\p\t\R_+)$, where
$U=\bigcup_k U_k$.  More precisely $h=[\tilde h,\bar h]$ for
$\tilde h,\bar h\in \H^{\p}_{[U]}(\p\t\R_+)$. It is easily seen
that $\tilde h,\bar h\in \H^{\p}_c(\p\t\R_+)$. The same is true
for $h'$. Thus $\H^{\p}_c(\p\t\R_+)$ is a perfect group. \quad
$\square$
\medskip

\begin{rem}
(1) Tsuboi gave another proof of the perfectness of $\H(\R_+)$ in
\cite{tsu2}. He did not  use \cite{ed-ki} in it.

(2) Given a  smooth manifold with boundary $M$ of dimension $\geq
2$, it is  known that the group $\D(M)$ is perfect (see Rybicki
\cite{Ryb98}; also Abe and Fukui \cite{AF01} by using a different
method).
 For $n=1$ $\D(M)$ is not perfect. In particular, Fukui in
 \cite{Fuk80} calculated that $H_1(\D(\R_+))=\R$.

(3) Let $\R^n_+=[0,\infty)\t\R^{n-1}$ be the half-space and $0\leq
s\leq\infty$. Let $\D_s(\R^n_+)$ be the compactly supported
identity component of the subgroup of all elements of $\D(\R^n_+)$
which are $s$-tangent to the identity on $\p_{\R^n_+}$. Here $f$
is 0-tangent to id means that $f=\id$ on $\p_{\R^n_+}$. If $0\leq
s<\infty$ then $\D_s(\R^n_+)$ is not perfect. In fact, for any
diffeomorphisms $f,g\in \D_s(\R^n_+)$ we have
$$     D^{s+1}(fg)(0)= D^{s+1}f(0)+D^{s+1}g(0),\quad    D^{s+1}f^{-1}(0) = -D^{s+1}f(0).  $$
Therefore if we choose $h\in \D_s(\R^n_+)$ such that
$D^{s+1}h(0)\not =0$, the above equalities yield that $h$ cannot
be in the commutator subgroup.
\end{rem}

Finally, let us indicate further perfectness result concerning
homeomorphism groups.  Let $M$ be a compact manifold with boundary
$\p$. Let $\p=\p_i$, $i=1\ld k$, be the family of all connected
components of the boundary $\p$ of $M$, that is
$\p=\p_1\cup\ldots\cup\p_k$. Let $K=\{1\ld k\}$. For any $J\s K$
let $\H(M^o,J)$ denote all the elements of $\H(M^o)$ that can be
joined with the identity by an isotopy which stabilizes near
$\p_J$, where $\p_J:=\bigcup_{i\in J}\p_i$. In particular,
$\H(M^o)=\H(M^o,\emptyset)$ and $\H_c(M^o)=\H(M^o,K)$. Then we
have \begin{thm}  \cite{MD} The groups $\H(M^o,J)$, where $J\s K$,
are perfect.
\end{thm}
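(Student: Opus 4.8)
The plan is to exhibit $G:=\H(M^o,J)$ as a group in which every element is a finite product of elements drawn from perfect subgroups of $G$; by the elementary remark that $g\in[H,H]\s[G,G]$ whenever $g$ lies in a perfect subgroup $H\leq G$, this forces $G=[G,G]$. Two families of perfect subgroups will be used. First, $\H_c(M^o)=\H(M^o,K)$ is a subgroup of $G$ (an isotopy stationary near all of $\p$ is in particular stationary near $\p_J$), and it is perfect by Theorem 1.1, since $M^o$ is boundaryless and admits a compact exhaustion. Second, for each non-stabilized component $i\in K\setminus J$ fix a collar $C_i\cong\p_i\t[0,1)$ and let $G_i\leq G$ consist of those homeomorphisms joinable to the identity by an isotopy supported in the open collar $C_i^o:=\p_i\t(0,1)$; such an isotopy is automatically stationary near $\p_J$ (the collars of distinct components being disjoint), so indeed $G_i\leq G$.

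First I would prove that each $G_i$ is perfect. Identifying $C_i^o\cong\p_i\t\R$ with the end $\p_i\t\{0\}$ sent to one end of $\p_i\t\R$, an element of $G_i$ is supported in $C_i^o$ and may accumulate only toward that end. I would repeat the construction in the proof of Theorem 1.2 for the one-dimensional boundary case, now with $\p_i$ in place of a point: choose a decreasing sequence $b_k>\bar b_k>\bar a_k>a_k\r 0$ and, via the isotopy extension theorem (Theorem 2.5), recursively capture the element on the slabs $\p_i\t[\bar a_k,\bar b_k]$ so as to split it as $g=h\bar h$ with $h,\bar h$ supported in $C_i^o$, each carrying a twist-free slab decomposition (the exclusion of twisting, cf. (3.4)) over a countable, locally finite family of slabs. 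Corollary 3.2(2) (with $N=\p_i$ and the slabs playing the role of the intervals there) together with Corollary 3.3 then presents each of $h,\bar h$ as a single commutator of elements supported in $C_i^o$; since Corollary 3.3 admits countable families, non-compactness of the support toward the end causes no difficulty. The auxiliary homeomorphisms $\phi$ and $S(\cdot)$ produced there are supported in $C_i^o$ and are isotopic to the identity through maps supported off $\p_J$, hence lie in $G_i$. Thus $G_i\leq[G_i,G_i]$.

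It then remains to factor an arbitrary $h\in G$, with stabilizing isotopy $h_t$, as a product of an element of $\H_c(M^o)$ and finitely many elements of the $G_i$. Since $M$ is compact, $K\setminus J$ is finite, so I would peel off the ends one at a time. For a fixed $i\in K\setminus J$, the recursive construction above, applied to $h_t$ near $\p_i\t\{0\}$, produces $g_i,g_i'\in G_i$ which can be arranged so that their product agrees with $h$ on a neighborhood of that end while remaining supported in $C_i^o$; consequently $(g_ig_i')^{-1}h$ is the identity near $\p_i\t\{0\}$ and is unchanged near the other ends. Iterating over all $i\in K\setminus J$, and recalling that $h_t$ is already stationary near $\p_J$, the final remainder $h_0$ is the identity on a neighborhood of every end of $M^o$, hence compactly supported, i.e.\ $h_0\in\H_c(M^o)$. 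Combining the perfectness of $\H_c(M^o)$ (Theorem 1.1) with that of each $G_i$ yields $h\in[G,G]$.

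The main obstacle is exactly this last factorization on the non-compact manifold $M^o$: one must carry out the recursive slab construction at each non-stabilized end so that the two slab-type factors together exhaust the germ of $h$ at the end, leaving a \emph{compactly supported} remainder, and one must verify throughout that every factor and every auxiliary homeomorphism $\phi$, $S(\cdot)$ is not merely an element of $\H(M^o)$ but genuinely an element of $G=\H(M^o,J)$, i.e.\ joinable to the identity by an isotopy stationary near $\p_J$. Confining all constructions to the compact core and to the collars over $\p_{K\setminus J}$ keeps everything equal to the identity on a fixed neighborhood of $\p_J$; the remaining technical points, namely the local finiteness of the infinite slab families and the twist-free form of the factors, are handled exactly as in the proof of Theorem 1.2.
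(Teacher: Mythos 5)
Your proposal cannot be checked line-by-line against a proof in the paper, because the paper does not contain one: Theorem 3.5 is attributed to McDuff \cite{MD}, with \cite{ry6} cited for a proof, and the text adds only the remark that the argument breaks down when $M^o$ is not the interior of a compact manifold. What you give is a correct, self-contained reconstruction assembled from the paper's own tools, and its architecture --- split $h\in\H(M^o,J)$ into factors supported in the open collars $C_i^o\cong\p_i\t(0,1)$ of the ends $i\in K\setminus J$ plus a compactly supported remainder, treat each collar factor by the slab-capture of the proof of Theorem 1.2 (via Theorem 2.5) followed by the Mather trick on products (Corollary 3.2(2) glued along a locally finite family via Corollary 3.3), and treat the remainder by Theorem 1.1 --- is exactly the strategy of the works the paper cites. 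The recursive slab choice does survive the passage from $\H^{\p}_c(\p\t\R_+)$ to your setting: the track $\bigcup_{t}h_t^{-1}(\p_i\t\{a_{k-1}\})$ is a compact subset of $M^o$, hence misses some collar neighborhood $\p_i\t(0,b_k]$ of the end, and a connectedness-in-$t$ argument then keeps $h_t(\p_i\t(0,b_k])$ inside $\p_i\t(0,a_{k-1})$ for all $t$, so Theorem 2.5 applies just as in the compact-boundary case.

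One step you should state more carefully is the very last one. That the remainder $h_0$ is the identity near every end makes it a compactly supported \emph{homeomorphism}, but membership in $\H_c(M^o)$ demands a compactly supported \emph{isotopy} to the identity, and for these end-behavior groups the distinction is real (a Dehn twist on $\mathbb S^1\t\R$ supported in a compact annulus is isotopic to the identity in $\H(\mathbb S^1\t\R)$, but not through compactly supported isotopies). Your construction does deliver the required isotopy, because the isotopy extension theorem gives agreement of isotopies, $g_{i,t}=h_t$ on the capture slabs for all $t$ and not merely at $t=1$; consequently $g_{i,t}g'_{i,t}=h_t$ on a fixed neighborhood $\p_i\t(0,\bar b_1]$ of each end for all $t$, and the remainder isotopy $\bigl(\prod_{i\in K\setminus J}g_{i,t}g'_{i,t}\bigr)^{-1}h_t$ is the identity on fixed neighborhoods of all ends (near $\p_J$ because $h_t$ stabilizes there), hence compactly supported. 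With that point made explicit, the argument is complete.
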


For the proof, see also \cite{ry6}. The proof is  no longer valid
if we drop the assumption that $M^o$ is the interior of a manifold
with boundary, e.g. if $M$ is the cylinder $\mathbb S^1\t\R$ with
attached infinitely many handles.

\section{On the simplicity of $\H_c(M)$}
The following result is related to Ling's paper \cite{li}.
\begin{prop}
Under the hypothesis of 1.1, there does not exist any   fixed
point free normal subgroup of $\H_c(M)$.
\end{prop}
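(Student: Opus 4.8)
The plan is to show directly that any \emph{nontrivial} normal subgroup $N$ of $\H_c(M)$ fails to act freely; recall that a subgroup is fixed point free exactly when no element other than $\id$ has a fixed point, so it suffices to exhibit inside $N$ a single nonidentity homeomorphism with nonempty fixed point set. The whole argument is the classical disjoint-support commutator trick, and it uses only that $M$ is a manifold of dimension $\geq 1$ in which balls exist and support nontrivial homeomorphisms, which is guaranteed under the standing assumptions.

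First I would pick $f\in N$ with $f\neq\id$ and a point $x_0\in M$ with $f(x_0)\neq x_0$. Using continuity of $f$, I would then choose a ball $B\in\B$ with $x_0\in B$ small enough to satisfy simultaneously $\overline{B}\cap f(\overline{B})=\emptyset$ and $\overline{B}\cup f(\overline{B})\neq M$. Both conditions are available because $\overline{B}$ and $f(\overline{B})$ shrink to $\{x_0\}$ and $\{f(x_0)\}$ respectively as the diameter of $B$ tends to $0$, so for small $B$ they are disjoint and their union can be kept off any fixed third point of $M$. Next I would take any $g\in\H_B(M)$ with $g\neq\id$; such $g$ exist since $B$ is a ball and, by the Alexander trick, $\H_B(M)$ consists of all homeomorphisms compactly supported in $B$. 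Now form the commutator $k:=[f,g]=fgf^{-1}g^{-1}$. Normality of $N$ gives $k=f\,(gf^{-1}g^{-1})\in N$, because $gf^{-1}g^{-1}$ is a conjugate of $f^{-1}\in N$ by the element $g\in\H_c(M)$, while $f\in N$ as well.

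The conclusion is then immediate from the support bookkeeping. Writing $k=(fgf^{-1})\,g^{-1}$, the factor $fgf^{-1}$ is supported in $f(\overline{B})$ and $g^{-1}$ in $\overline{B}$, and these supports are disjoint; hence $k$ equals $g^{-1}$ on $B$, equals $fgf^{-1}$ on $f(B)$, and is the identity on the complement of $\overline{B}\cup f(\overline{B})$. Since $g\neq\id$ forces $k|_{B}=g^{-1}|_{B}\neq\id$, the element $k$ is nontrivial, yet $k$ fixes every point of the nonempty set $M\setminus(\overline{B}\cup f(\overline{B}))$. Thus $N$ contains a nonidentity homeomorphism with a fixed point, so $N$ is not fixed point free, which proves the proposition. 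The only step requiring genuine care is the simultaneous choice of $B$ making $f(\overline{B})$ both disjoint from $\overline{B}$ and unable to cover $M$ together with it; this is the one place where the geometry of $M$ enters, and it is precisely the observation that $M$ cannot be exhausted by two arbitrarily small balls.
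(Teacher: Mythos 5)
Your argument is internally sound, but it proves the wrong statement, because you have misread what ``fixed point free'' means here. In this paper (and in Ling's paper \cite{li}, to which the proposition is explicitly tied) a subgroup $G\leq\H_c(M)$ is fixed point free when it has no \emph{global} fixed point, i.e.\ there is no point $x\in M$ with $g(x)=x$ for every $g\in G$; it does \emph{not} mean that each non-identity element of $G$ acts freely. This is visible in the paper's own proof, which uses only the following consequence of the hypothesis: every point of $M$ lies in a small ball $U$ that is displaced by \emph{some} $f\in G$, possibly a different $f$ for different balls. The paper chooses a cover $\U$ of such balls, passes to a starwise finer cover with respect to which $\H_c(M)$ is factorizable (Prop.~5.2), and combines the commutator identities with perfectness (Theorem~1.1) and the displacement identity $[h_1,h_2]=[[h_1,f],h_2]\in G$ (valid for $h_1,h_2\in\H_U(M)$ when $f(U)\cap U=\emptyset$, using normality of $G$) to conclude that $\H_c(M)=[\H_c(M),\H_c(M)]\subseteq G$. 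So the actual content of the proposition is: a normal subgroup with no global fixed point must be all of $\H_c(M)$; equivalently, no \emph{proper} normal subgroup is fixed point free. This is the statement that belongs to the simplicity discussion of Section~4.

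What you establish is strictly weaker: every non-trivial normal subgroup $N$ contains some $k\neq\id$ with non-empty fixed point set, i.e.\ $N$ does not act freely. That follows from the paper's proposition (a freely acting non-trivial $N$ would have no global fixed point, hence would equal $\H_c(M)$, which visibly does not act freely), but not conversely: your argument says nothing about whether a \emph{proper} normal subgroup could still be without global fixed points, and that is precisely the case that matters. Indeed, for $M$ connected and boundaryless, \emph{every} non-trivial normal subgroup is fixed point free in the paper's sense (conjugate an element moving $x_0$ by homeomorphisms carrying $x_0$ anywhere), so there the proposition amounts to simplicity of $\H_c(M)$, while your statement reduces to an easy observation with no such consequence. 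Your disjoint-support computation with $k=[f,g]$, $g$ supported in a ball displaced by $f$, is correct and is in fact the same mechanism as the identity $[h_1,h_2]=[[h_1,f],h_2]$ used in the paper; but to prove the proposition you must run it for every ball of a cover and combine it with fragmentation and perfectness so as to show that $G$ absorbs all of $\H_c(M)$, rather than stopping at the existence of one element of $N$ with a fixed point.
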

\begin{proof} Suppose that $G$ is a fixed point free normal
subgroup of $H_c(M)$.  It follows that if $M$ has boundary then
$\dim M\geq 2$. Choose a cover $\U\s\B$ such that for any $U\in\U$
there is $f\in G$ such that $U$ and $f(U)$ are disjoint. Take a
cover $\V$ which is starwise finer than $\U$. This is possible
since $M$ is metrizable, so paracompact. We may assume that
$\H_c(M)$ is factorizable with respect to $\V$ (see Def. 5.1(1)
and Prop. 5.2 below). In view of the commutator equalities
$$ [fg,h]=f[g,h]f^{-1}[f,h],\quad [f,gh]=[f,g]g[f,h]g^{-1}
$$
and Theorem 1.1, it follows that
$$\H_c(M)=[\H_c(M),\H_c(M)]=\prod_{U\in\U}[\H_U(M),\H_U(M)].$$
Let $[h_1,h_2]\in[\H_U(M),\H_U(M)]$ with $U\in\U$ and $f\in G$
such that $U\cap f(U)=\emptyset$.  Then
$[h_1,h_2]=[[h_1,f],h_2]\in G$. Thus $\H_c(M)\s G$ as required.
\end{proof}
\noindent\emph{Proof of Corol. 1.3.} $(\Rightarrow)$ It follows
from a theorem of Ling \cite{li} since $\H_c(M)$ is factorizable
(Prop. 5.2 below) and \emph{transitively inclusive}. The latter
means that for any $U,V\in\B$ there is $h\in\H_c(M)$ such that
$h(U)\s V$.

$(\Leftarrow)$ $\H_c(M^o)$ is a normal subgroup of $\H_c(M)$.
 \quad $\square$
\begin{cor} If $\pm\neq\emptyset$, $\H^{\p}_c(M)$ is not simple
\end{cor}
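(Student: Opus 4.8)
The plan is to produce a nontrivial proper normal subgroup of $\hdm$; the natural candidate is $\H_c(M^o)$, viewed as a subgroup of $\hdm$ through the inclusions $\H_c(M^o)\leq\hdm\leq\hm$ recorded in the introduction. First I would confirm that this inclusion is genuine. If $h\in\H_c(M^o)$ then $\supp(h)$ is a compact subset of the \emph{open} manifold $M^o$, so it is disjoint from $\pm$ and in fact stays a fixed positive distance from $\pm$; hence $h$ equals the identity on a neighborhood of $\pm$, and a defining isotopy for $h$ inside $\H_c(M^o)$ shares this property, whence $h\in\hdm$.

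Next I would verify normality, which is essentially free: by the proof of Corol.~1.3 we already know that $\H_c(M^o)$ is normal in $\hm$, and since $\hdm\leq\hm$ this at once makes $\H_c(M^o)$ normal in $\hdm$. (Directly, for $g\in\hdm$ and $h\in\H_c(M^o)$ one has $\supp(ghg^{-1})=g(\supp(h))$, which is again a compact subset of $M^o$ because $g$ preserves $M^o$ and carries compacta to compacta; conjugating a defining isotopy of $h$ by $g$ keeps $ghg^{-1}$ in the identity component.)

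The heart of the argument is properness, i.e.\ exhibiting an element of $\hdm$ that is not the identity on any neighborhood of $\pm$. Here I would take a collar $P=\pm\t[0,1]$ of the boundary (which exists, as recalled in the proof of Theorem~1.2) and a half-ball $B\in\B$ with $B\s P$ and $\p_B=B\cap\pm\neq\emptyset$. Modeling $B$ on the standard half-ball, I build $h$ of the form $(x,s)\mapsto(x,\phi(s))$, where $\phi$ fixes $0$, equals the identity near the outer boundary of the collar, and satisfies $\phi(s)>s$ for small $s>0$; extend by the identity off $B$. Then $h$ has compact support in $B$, and scaling $\phi$ (e.g.\ $\phi_\tau=(1-\tau)\,\id+\tau\phi$) gives a compactly supported isotopy to the identity that fixes $\pm$ for all $\tau$, so $h\in\hdm$. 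However $\supp(h)$ accumulates at $\p_B\s\pm$, so it is not a compact subset of $M^o$, and therefore $h\notin\H_c(M^o)$.

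Finally, since $\dim M\geq 1$ forces $M^o\neq\emptyset$, the group $\H_c(M^o)$ is nontrivial (it contains homeomorphisms supported in a ball of $M^o$), so $\H_c(M^o)$ is a nontrivial proper normal subgroup of $\hdm$, which shows that $\hdm$ is not simple. The only delicate point is the properness step: one must make sure the constructed $h$ really lies in $\hdm$—in particular that the connecting isotopy is compactly supported and equals the identity on $\pm$ throughout—while its support still reaches the boundary.
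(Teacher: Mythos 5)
Your approach is exactly the paper's: the published proof of this corollary is the one-line remark that $\H_c(M^o)$ is a proper normal subgroup of $\hdm$, and your verifications of the inclusion, of normality, and of nontriviality are precisely the routine details behind that remark. The one step that does not work as literally written is your explicit element witnessing properness. In half-ball coordinates the map $(x,s)\mapsto(x,\phi(s))$ with $\phi(s)>s$ for small $s>0$ neither preserves $B$ (a point of $B$ with $|x|^2+s^2$ close to $1$ and $s$ small is pushed out of the half-ball) nor extends continuously by the identity off $B$, so it is not a homeomorphism of $M$; if instead you read it as the product map on the whole collar $P=\pm\t[0,1]$, then its support is $\pm\t\,\overline{\{s:\phi(s)\neq s\}}$, which is not contained in $B$ and is compact only when $\pm$ is compact --- an assumption the corollary does not make. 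The standard repair is to taper in the boundary directions: fix a boundary chart $\R^{n-1}\t[0,1)$ inside the collar and set $h(x,s)=(x,\phi_{|x|}(s))$, extended by the identity, where $(\phi_r)_{r\geq 0}$ is a continuous family of homeomorphisms of $[0,1)$ such that $\phi_r(0)=0$, $\phi_r(s)=s$ for $s$ near $1$, $\phi_r=\id$ whenever $r\geq 1$, and $\phi_r(s)>s$ for small $s>0$ when $r\leq\frac{1}{2}$. This $h$ is a compactly supported homeomorphism fixing $\pm$, your convex-combination isotopy $(x,s)\mapsto\bigl(x,(1-\tau)s+\tau\phi_{|x|}(s)\bigr)$ is compactly supported and fixes $\pm$ for all $\tau$, and $\supp(h)$ accumulates on $\pm$, so $h\in\hdm\setminus\H_c(M^o)$. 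With this correction your proof is complete and coincides with the paper's.
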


In fact, $\H_c(M^o)$ is a proper normal subgroup of
$\H^{\p}_c(M)$.

\section{Conjugation-invariant norms }
The notion of the conjugation-invariant norm is a basic tool in
studies on the structure of groups.  Let $G$ be a group. A
\wyr{conjugation-invariant norm} (or \emph{norm} for short) on $G$
is a function $\nu:G\r[0,\infty)$ which satisfies the following
conditions. For any $g,h\in G$ \begin{enumerate} \item $\nu(g)>0$
if and only if $g\neq e$; \item $\nu(g^{-1})=\nu(g)$; \item
$\nu(gh)\leq\nu(g)+\nu(h)$; \item $\nu(hgh^{-1})=\nu(g)$.
\end{enumerate}
Recall that a group is called \emph{ bounded} if it is bounded
with respect to any bi-invariant metric. It is easily seen that
$G$ is bounded if and only if any conjugation-invariant norm on
$G$ is bounded.

Let  $g\in[G,G]$. The \emph{commutator length} of $g$, $\cl_G(g)$,
is the least integer $r$ such that $g$ can be expressed by
\begin{equation} g=[h_1,\bar h_1]\ldots[h_r,\bar
h_r]\end{equation}
 for
some $h_i,\bar h_i\in G$, $i=1\ld r$. Observe that the commutator
length $\cl_G$ is a conjugation-invariant norm on $[G,G]$. In
particular, if $G$ is a perfect group then $\cl_G$  is a
conjugation-invariant norm on $G$.  Then $G$ is called
\emph{uniformly perfect} if  $G=[G,G]$  and the norm $\cl_G$ is
bounded.

\begin{dff}
 Let $G$ be a subgroup of $\H(M)$ and let $\B$ be the family of all balls and
 half-balls of $M$.
\begin{enumerate}
\item $G$ is called \emph{factorizable} (resp. with respect to a
cover $\U\s\B$) if for any $g\in G$ there are $d\in\N$, $B_1\ld
B_d\in\B$ (resp. $B_1\ld B_d\in\U$) and $g_1\ld g_d\in G$ such
that
\begin{equation}
g=g_1\ldots g_d\quad\hbox{with}\; g_i\in G_{B_i} \end{equation}
for all $i$. Here $G_B$ is the subgroup of $G$ of all elements
that can be connected to the identity by an isotopy in $G$
compactly supported in $B$. \item Next, a topological group $G$ is
\emph{continuously factorizable} if there exist $d\in\N$,  $B_1\ld
B_d\in\B$, and continuous mappings $S_i:G\r G_{B_i}$,  $i=1\ld r$,
such that for all $g\in G$
\begin{equation*}g=S_1(g)\ldots S_{d}(g).\end{equation*}
 \end{enumerate}
\end{dff}
\begin{prop}
Under the assumption of Theorem 1.1 on $M$, the groups $\hm$ and
$\hdm$ are factorizable with respect to any cover $\U\s\B$. The
same is true for the isotopy groups $\P\hm$ and $\P\hdm$.
\end{prop}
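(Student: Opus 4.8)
The plan is to reduce the whole statement to the fragmentation of isotopies recorded in Corollary \ref{isot}, treating first the case $M$ compact and then reducing the exhaustion case to it. Suppose first that $M$ is compact and let $g\in\hm$. By definition of $\hm$ there is an isotopy $g_t$ in $\hm$ with $g_0=\id$ and $g_1=g$. Since $\U\s\B$ is an open cover of the compact space $M$, I extract a finite subcover $U_1\ld U_d\in\U$ and apply Corollary \ref{isot} to $g_t$ and $(U_i)_{i=1}^d$. This yields isotopies $g_{l,t}$, each supported in some $U_{i(l)}$, with $g_t=g_{k,t}\cdots g_{1,t}$; evaluating at $t=1$ gives $g=g_{k,1}\cdots g_{1,1}$. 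Each factor $g_{l,1}$ is the time-one map of an isotopy in $\hm$ compactly supported in the ball $U_{i(l)}\in\U$, hence lies in $(\hm)_{U_{i(l)}}$, and this is exactly the factorization required by Def. 5.1(1).

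For the exhaustion case let $g\in\hm$. By definition there are $j\in\N$ and a compactly supported isotopy $g_t$ with $g_0=\id$, $g_1=g$ and $\supp(g_t)\s M_j^o$ for all $t$; put $C=\cl(\bigcup_{t\in I}\supp(g_t))$, a compact subset of $M_j^o$, and extract a finite subcover $U_1\ld U_d\in\U$ of $C$. The point is that Corollary \ref{isot} localizes to $C$. Indeed, it rests on Theorem 2.3, whose proof applies Theorem 2.2 a finite number of times, and Theorem 2.2 holds for an arbitrary topological manifold and is a statement about imbeddings near a compact set. Repeating that argument with $M$ replaced by a fixed compact neighborhood of $C$, and with the cover $(U_i)_{i=1}^d$ shrunk over $C$ (choose opens $U_{i,l}$ with $U_{i,0}=U_i$, $\cl(U_{i,l+1})\s U_{i,l}$ and $\bigcup_iU_{i,l}\supseteq C$ for all $l$), shows that the isotopy $g_t$, being supported in $C$, factors as $g_t=g_{k,t}\cdots g_{1,t}$ with each $g_{l,t}$ supported in one of the balls $U_i$. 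Evaluating at $t=1$ then writes $g$ as a product of elements of the groups $(\hm)_{U_i}$, $U_i\in\U$, as desired.

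The groups $\hdm$, $\P\hm$ and $\P\hdm$ are handled in the same way. For $\hdm$ one uses half-balls of $\U$ along $\pm$ together with the boundary-preserving clause of Corollary \ref{isot}: if $g_t|_{\pm}=\id$ then every fragment satisfies $g_{l,t}|_{\pm}=\id$, so the factors lie in $(\hdm)_{U_i}$. For the path groups one simply regards an element $\gamma\in\P\hm$ (resp. $\P\hdm$) as an isotopy and fragments the whole path, obtaining $\gamma$ as a product of paths each supported in some $U_i\in\U$, i.e. in $(\P\hm)_{U_i}$. The main obstacle is the localization used in the exhaustion case: since the chosen balls cover only $C$ and not all of $M$, one must check that the fragmentation still terminates at the identity and introduces no factor supported away from $C$. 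This is guaranteed by the precise form of Theorem 2.2, whose deformations are modulo the complement of a compact neighborhood of $C$ and whose factors are supported in the prescribed $U_i$; as the maps being fragmented are already the identity off $C$, the process stays near $C$ and every factor is genuinely supported in one of the chosen balls of $\U$.
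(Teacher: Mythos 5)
Your compact case is correct and is essentially the paper's own argument: the paper invokes Theorem 2.3, of which Corollary 2.4 is the immediate consequence you use, and evaluating the fragmented isotopy at $t=1$ gives the factorization required by Definition 5.1(1). The problem is the exhaustion case. You extract a finite subcover of the compact set $C=\cl\bigl(\bigcup_{t}\supp(g_t)\bigr)$ only, and claim that the Edwards--Kirby iteration behind Theorem 2.3 still ``terminates at the identity'' because the deformations are modulo complements of compact neighborhoods and the map is the identity off $C$. That is not what the iteration gives. At step $i$ the deformation supplied by Theorem 2.2 makes the current map the identity on $\cl(U_{i,i})$ while changing it only inside a compact neighborhood $N_i$ of $\cl(U_{i,i})$ in $U_i$; but on $N_i\setminus\cl(U_{i,i})$ the deformed map is uncontrolled, i.e.\ each step deposits ``dirt'' inside $U_i$. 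Later steps clean only on their own shrunk sets, so the terminal map $h_d$ is guaranteed to be the identity only on $\bigcup_\alpha\cl(U_{\alpha,d})\cup\bigl(M\setminus\bigcup_i N_i\bigr)$. In the compact case the shrunk covers cover the whole manifold, which forces $h_d=\id$; when they cover only $C$, the region $\bigcup_i N_i\setminus\bigcup_\alpha\cl(U_{\alpha,d})$ is nonempty in general, and $h_d$ can be a nontrivial homeomorphism supported there --- a set not contained in any single element of $\U$. Your factorization therefore has an unfactored remainder, and the ``modulo'' clauses of Theorem 2.2 confine where this remainder can live but do not kill it. (A further, smaller, point: ``a fixed compact neighborhood of $C$'' need not be a manifold, so one cannot literally rerun Theorem 2.3 with $M$ replaced by it.)

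The fix is the paper's route, which is simpler than your localization and is what ``the reasoning is similar to that in the proof of 1.1'' refers to: since $\U$ covers all of $M$, take a finite subcover not of $C$ but of the compact manifold $M_j$, where $j$ is chosen so large that the compact set $\cl\bigl(\bigcup_t\supp(g_t)\bigr)$ lies in $M_j^o$ (possible since $M_{j'}\s M_{j'+1}^o$), so that $g_t=\id$ near $\p_{M_j}$. Apply Corollary 2.4 to the compact manifold $M_j$ with the open cover $\{U_i\cap M_j\}$, invoking its boundary clause so that every factor of the fragmented isotopy is the identity on $\p_{M_j}$; then extend each factor, together with its isotopy, by the identity to $M$. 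Each extended factor is compactly supported in $U_i\cap M_j\s U_i\in\U$, hence lies in the group $(\hm)_{U_i}$ of Definition 5.1(1), and the same argument with the boundary clause handles $\hdm$, $\P\hm$ and $\P\hdm$. No fragmentation localized at the support is needed anywhere, and indeed such a localized statement does not follow from Theorems 2.2--2.3 by the argument you give.
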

\begin{proof}
If $M$ is compact, it follows from Theorem 2.3. If $M$ admits a
compact exhaustion, the reasoning is similar to that in the proof
of 1.1.
\end{proof}

For any $g\in\hm, g\neq\id$, denote by $\frag_M(g)$ the smallest
$d$ such that (5.2) holds. By definition $\frag_M(\id)=0$. Clearly
$\frag_M$ is a norm on $\hm$.  Likewise we define $\frag_M^{iso}$
 on the isotopy group $\mathcal P\hm$. Clearly
$\frag_M(f)\leq\frag_M^{iso}(f_t)$ if $f_t$ is an isotopy
connecting $f$ with the identity.

The significance of $\frag_M$ is illustrated by Theorem 1.4.

\begin{dff}
\begin{enumerate}
\item A topological group $G$ is  \emph{continuously perfect} if
there exist $r\in\N$ and continuous mappings $S_i:G\r G$, $\bar
S_i:G\r G$, $i=1\ld r$, satisfying the equality
\begin{equation}g=[S_1(g),\bar S_1(g)]\ldots[S_{r}(g),\bar
S_{r}(g)]\end{equation} for all $g\in G$. \item Let $H$ be a
subgroup of $G$. $H$ is said to be \emph{continuously  perfect in
G} if there exist $r\in\N$ and continuous mappings $S_i:H\r G$,
$\bar S_i:H\r G$, $i=1\ld r$, satisfying the equality (5.3)
 for all $g\in H$. Then $r_{H,G}$ denotes the smallest $r$ as
 above.
 \end{enumerate}
\end{dff}
Of course, every continuously perfect group is uniformly perfect.

\begin{prop} Suppose that the closure of  $B$ is included in $U$, where $B$ is a ball (or a
half-ball and $n\geq 2$) and $U$ is open in $M$. Then $\H_B(M)$ is
continuously perfect in $\H_U(M)$ with $r_{\H_B(M),\H_U(M)}=1$.
\end{prop}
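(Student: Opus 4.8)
The plan is to recycle the construction already carried out in Lemma 3.1 (and in Corollary 3.2 when $B$ is a half-ball and $n\geq 2$) and to upgrade it by verifying continuity. Recall that there one fixes a larger ball (or half-ball) $B'$ with $\cl(B)\s B'\s\cl(B')\s U$, a point $p\in\p_{B'}$, a locally finite, pairwise disjoint family of balls $(B_k)_{k=0}^{\infty}$ in $B'$ with $B_0=B$ and $B_k\r p$, and a homeomorphism $\phi\in\H_U(M)$ with $\phi(B_{k-1})=B_k$; the homomorphism $S:\H_B(M)\r\H_U(M)$ is given by $S(h)=\phi^kh\phi^{-k}$ on $B_k$ and $S(h)=\id$ elsewhere, and it satisfies $h=[S(h),\phi]$. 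I therefore set $\bar S(g):=\phi$, the constant map, which is trivially continuous, and take $S$ as above. With these choices the identity $g=[S(g),\bar S(g)]$ is exactly the content of Lemma 3.1 / Corollary 3.2, so the only things left to establish are that $S$ is continuous and that the value $1$ cannot be lowered.

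Since $M$ is compact, the compact-open topology on $\H_U(M)$ is the topology of uniform convergence of a homeomorphism together with its inverse, so I would show that $h_n\r h$ in $\H_B(M)$ forces $S(h_n)\r S(h)$ uniformly. Fix $\ee>0$. The essential point is the behaviour near the accumulation point $p$: because $(B_k)$ is locally finite with $B_k\r p$, the diameters $\operatorname{diam}(B_k)$ tend to $0$; moreover, as $h\in\H_B(M)$ maps $B$ onto $B$, one checks that $S(f)$ maps each $B_k$ onto itself for every $f\in\H_B(M)$. Hence for $x\in B_k$ both $S(h_n)(x)$ and $S(h)(x)$ lie in $B_k$, so their distance is at most $\operatorname{diam}(B_k)$, and there is $N$ with $\dist(S(h_n)(x),S(h)(x))<\ee$ for all $x\in\bigcup_{k\geq N}B_k$ and all $n$. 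On the complement of $\bigcup_k B_k$ the map $S$ is the identity, contributing nothing. On the remaining finite union $\bigcup_{k<N}\cl(B_k)$ I would use that $S(h_n)=\phi^kh_n\phi^{-k}$, that each of the finitely many $\phi^k$ is uniformly continuous on the compact set $\cl(B')$, and that $h_n\r h$ uniformly, to obtain $\dist(S(h_n)(x),S(h)(x))<\ee$ for $n$ large. Combining the two estimates yields uniform convergence $S(h_n)\r S(h)$.

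For the inverses, since $S$ is a homomorphism we have $S(h)^{-1}=S(h^{-1})$; as inversion is continuous on $\H_B(M)$ and $S$ is continuous, the map $h\mapsto S(h)^{-1}$ is continuous as well, so $S$ is continuous as a map into the topological group $\H_U(M)$. Finally, $r_{\H_B(M),\H_U(M)}\geq 1$ simply because $\H_B(M)$ is nontrivial and an empty product of commutators equals $\id$; together with the factorization $g=[S(g),\bar S(g)]$ this gives $r_{\H_B(M),\H_U(M)}=1$. I expect the main obstacle to be the uniform control of $S(h_n)$ against $S(h)$ near $p$, i.e. the tail of the family $(B_k)$; everything else reduces to the already established Basic Lemma together with elementary uniform-continuity estimates, and the half-ball case with $n\geq 2$ is identical using Corollary 3.2 in place of Lemma 3.1.
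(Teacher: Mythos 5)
Your proposal is correct and takes essentially the same route as the paper: the paper proves Proposition 5.4 in one line, by observing that the homomorphism $S:\H_B(M)\r\H_U(M)$ constructed in Lemma 3.1 is continuous and that $\bar S\equiv\phi$ is a constant map, which is exactly your choice; what you add is the explicit verification of the continuity of $S$ (via the shrinking diameters of the balls $B_k$ near $p$ and uniform estimates on the finitely many remaining $\phi^k h\phi^{-k}$), which the paper leaves to the reader. One small correction: $M$ is not assumed compact in this proposition, but your argument is unaffected, since every map involved is the identity outside the compact set $\cl(B')$, so convergence in the compact-open topology is still equivalent to uniform convergence there.
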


\begin{proof} It suffices to observe that in the proof of Lemma
3.1 the homomorphism $S:\H_B(M)\r\H_U(M)$ is continuous, and the
mapping $\bar S$ is a constant depending on $B$ and $U$.
 \end{proof}

The following fact is a consequence of Prop. 5.4.
\begin{prop}
If $\H_c(M)$ is continuously factorizable then it is also
continuously perfect.
\end{prop}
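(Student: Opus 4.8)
The plan is to combine the continuous factorization hypothesis with Proposition 5.4 applied to each factor. Suppose $\H_c(M)$ is continuously factorizable, so by Def. 5.1(2) there exist $d\in\N$, balls (or half-balls) $B_1\ld B_d\in\B$, and continuous maps $S_i:\H_c(M)\r\H_{B_i}(M)$ with $g=S_1(g)\ldots S_d(g)$ for all $g$. For each index $i$, I would fix an open set $U_i$ with $\cl(B_i)\s U_i$; since $B_i$ is a ball or half-ball (with $n\geq 2$ in the half-ball case, which holds under the hypothesis of Theorem 1.1 when the boundary is nonempty), Prop. 5.4 gives that $\H_{B_i}(M)$ is continuously perfect in $\H_{U_i}(M)$ with $r_{\H_{B_i}(M),\H_{U_i}(M)}=1$. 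Concretely, tracing the proof of Lemma 3.1, there is a continuous homomorphism $T_i:\H_{B_i}(M)\r\H_{U_i}(M)$ and a fixed $\phi_i\in\H_{U_i}(M)$ with $h=[T_i(h),\phi_i]$ for every $h\in\H_{B_i}(M)$.

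Next I would assemble these into a single commutator expression. For $g\in\H_c(M)$, write $h_i:=S_i(g)\in\H_{B_i}(M)$, so that
\begin{equation*}
g=\prod_{i=1}^d h_i=\prod_{i=1}^d[\,T_i(h_i),\phi_i\,].
\end{equation*}
Setting $P_i(g):=T_i(S_i(g))$ and $\bar P_i(g):=\phi_i$ (a constant map), each $P_i:\H_c(M)\r\H_c(M)$ is continuous as a composition of the continuous maps $S_i$ and $T_i$, and each $\bar P_i$ is trivially continuous. Thus $g=\prod_{i=1}^d[P_i(g),\bar P_i(g)]$ with all factors continuous in $g$, which is exactly the defining equation (5.3) of Def. 5.3(1) with $r=d$. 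Hence $\H_c(M)$ is continuously perfect.

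The only point requiring care — and the one I expect to be the main obstacle — is verifying that the data supplied by Prop. 5.4 is genuinely \emph{continuous} and \emph{uniform in the factor}, not merely existential. Prop. 5.4 as stated gives continuity of $S=T_i$ and asserts $\bar S=\phi_i$ is constant, so the composite $T_i\ci S_i$ is continuous and $\phi_i$ depends only on the fixed pair $(B_i,U_i)$; this is precisely what makes the finitely many choices patch together into global continuous maps. I would also note that the ranges land in $\H_{U_i}(M)\leq\H_c(M)$, so the $P_i$ and $\bar P_i$ are legitimately maps into $\H_c(M)$, and that the number of commutators $r=d$ is finite and fixed, as required by continuous perfectness. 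Everything else is a routine substitution.
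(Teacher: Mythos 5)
Your proposal is correct and follows essentially the same route as the paper: the paper's proof likewise takes the balls $B_1\ld B_d$ from Def.~5.1(2), chooses open sets $U_i$ with $\overline{B_i}\s U_i$, and applies Prop.~5.4 to each pair $(B_i,U_i)$. Your write-up simply makes explicit the composition $T_i\ci S_i$ and the constant maps $\phi_i$ that the paper leaves implicit.
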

\begin{proof}
If $B_1\ld B_d\in\B$ is as in Def. 5.1(2), then choose any open
subsets $U_1\ld U_d$ with $\overline B_i\s U_i$. Then we use Prop.
5.4 to each pair $(B_i,U_i)$.
\end{proof}

However we do not know whether some homeomorphism groups $\H_c(M)$
are continuously factorizable. See also \cite{ryb} about locally
continuously perfect groups of homeomorphisms.

  Burago, Ivanov and
Polterovich proved in the   \cite{BIP} that $\D(M)$ is bounded
(and a fortiori uniformly perfect) for many manifolds. We will
need some preparatory notions and  results from \cite{BIP}. A
subgroup $H$ of $G$ is called  \emph{strongly m-displaceable} if
there is $f\in G$ such that the subgroups $H$, $fHf^{-1}$\ld
$f^mHf^{-m}$ pairwise commute. Then we say that $f$
\emph{m-displaces} $H$. Fix a conjugation-invariant norm $\nu$ on
$G$ and assume that $H\s G$ is strongly $m$-displaceable. Then
$e_m(H):=\inf\nu(f)$, where $f$ runs over the set of elements of
$G$ that $m$-displaces $H$, is called the \emph{order m
displacement energy} of $H$.

\begin{thm} \cite{BIP} Given
a group $G$ equipped with a conjugation-invariant norm $\nu$ and
given $H\s G$,  if there exists $g\in G$ that $m$-displaces $H$
for every $m\geq 1$ then for all $h\in [H,H]$
\begin{enumerate} \item $\cl_G(h)\leq 2$; and \item $\nu(h)\leq
14\nu(g)$.\end{enumerate}
\end{thm}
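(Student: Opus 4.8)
The plan is to prove the two claims by an explicit \emph{displacement trick} that turns a commutator of elements of $H$ into a commutator of elements of the whole group $G$, exploiting the fact that $g$ pairwise-commutes all the conjugates $g^k H g^{-k}$. The key algebraic device is the following observation: if $a,b\in H$ and $g$ $1$-displaces $H$, then $gag^{-1}$ commutes with every element of $H$, so one can rewrite a product such as $[a,b]$ using an auxiliary element built from $g$ without increasing the commutator count. The hard part will be finding the right bookkeeping so that a product of \emph{arbitrarily many} commutators $\prod_{i=1}^s[a_i,b_i]$ collapses into exactly two commutators in $G$; this is where the hypothesis ``$g$ $m$-displaces $H$ for every $m$'' (rather than just $m=1$) is used.

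First I would set up the standard construction. Given $h=\prod_{i=1}^s[a_i,b_i]$ with $a_i,b_i\in H$, form the infinite (or length-$m$, $m\geq s$) product of displaced copies. Concretely, consider the element $A=\prod_{k\geq 1} g^k a\, g^{-k}$-type products assembled from the $a_i$'s placed on the disjoint displaced copies $g^kHg^{-k}$; because these copies pairwise commute, such a product is well defined and lies in $G$. The classical identity (due to this kind of ``infinite swindle'') expresses $h$ as a single commutator $[\,U, g\,]$ times a correction which is itself a commutator, after telescoping the conjugation by $g$. I would write $U=\prod_{k\geq 0} g^{k}(a_1\cdots a_s)g^{-k}$ and $V$ analogously from the $b_i$, so that the telescoping $g\,U\,g^{-1}=U\cdot(\text{first factor})^{-1}$ yields $[U,g]$ equal to the displaced tail, and match terms so that $h=[U,g][V,\tilde g]$ for a suitable $\tilde g$. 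This gives $\cl_G(h)\leq 2$, proving part (1).

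For part (2) I would estimate the norm of each of the (at most two) commutators. Each commutator has the form $[W,g']$ where $g'$ is a single conjugate of $g$, hence $\nu(g')=\nu(g)$ by conjugation-invariance. Using $\nu([W,g'])=\nu(Wg'W^{-1}g'^{-1})\leq \nu(g'{}^{-1})+\nu(Wg'W^{-1})=2\nu(g')=2\nu(g)$ from axioms (2)--(4), a naive count gives $\nu(h)\leq 4\nu(g)$; the factor $14$ in the statement comes from the fact that the auxiliary elements $U,V$ are themselves displaced products whose norms must be controlled through several intermediate conjugations and regroupings, each contributing a bounded multiple of $\nu(g)$. I would track these contributions term by term, applying the triangle inequality (3) and invariance (4) at each regrouping, and check that the total stays below $14\nu(g)$.

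The main obstacle is the second estimate: getting the explicit constant $14$ rather than merely \emph{some} constant. The collapse to two commutators is a clean algebraic swindle, but controlling $\nu$ throughout requires that every rewriting step be a conjugation or a product of at most boundedly many displaced pieces each conjugate to $g$. I would follow the accounting in \cite{BIP} closely, since the constant is tuned to their particular choice of $U$ and $V$; the conceptual content is entirely in part (1), while part (2) is a careful but essentially mechanical norm bookkeeping built on axioms (2)--(4).
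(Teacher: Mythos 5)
The paper itself contains no proof of this statement---it is quoted verbatim from \cite{BIP}---so your sketch can only be measured against the Burago--Ivanov--Polterovich argument, whose general strategy (stack data on the pairwise-commuting copies $g^kHg^{-k}$, telescope against $g$, end with two commutators, then use conjugation-invariance) you correctly identify. But the concrete construction you commit to does not work. Write $h=\prod_{i=1}^s[a_i,b_i]$ with $a_i,b_i\in H$. Your element $U=\prod_{k\geq 0}g^k(a_1\cdots a_s)g^{-k}$ is the Mather-type swindle, and even formally its telescoped commutator $[U,g]$ equals $a_1\cdots a_s$, the product of the $a_i$'s alone; this has nothing to do with $h$, in which the letters $a_i,a_i^{-1},b_i,b_i^{-1}$ interleave, and no companion $V$ ``built from the $b_i$'' can repair it, since $h$ is not of the form $(a_1\cdots a_s)\cdot(\hbox{word in the } b_i)$---already $[a_1,b_1]=a_1\cdot(b_1a_1^{-1}b_1^{-1})$ shows this. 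The correct stacking uses the \emph{tails} $w_i=[a_i,b_i][a_{i+1},b_{i+1}]\cdots[a_s,b_s]$: putting $W=\prod_{i=1}^s g^{2(i-1)}w_ig^{-2(i-1)}$ and using $w_{i+1}w_i^{-1}=[a_i,b_i]^{-1}$ together with the commutation of distinct copies, one gets $[W,g^2]=h\cdot\prod_{i=1}^s g^{2i}[a_i,b_i]^{-1}g^{-2i}$, hence $h=[W,g^2]\cdot g^2[x,y]g^{-2}$, where $x=\prod_i g^{2(i-1)}a_ig^{-2(i-1)}$, $y=\prod_i g^{2(i-1)}b_ig^{-2(i-1)}$ and one uses that stacking on commuting copies turns the product of commutators $\prod_i g^{2(i-1)}[a_i,b_i]g^{-2(i-1)}$ into the single commutator $[x,y]$. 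That is the source of $\cl_G(h)\leq 2$. Note also that the tails terminate, which is exactly what kills the far-end boundary term; your $U$, once truncated to a finite product (as it must be---infinite products are meaningless in an abstract group $G$), leaves a boundary term $g^{m+1}(a_1\cdots a_s)^{\pm 1}g^{-(m+1)}$ whose norm $\nu(a_1\cdots a_s)$ is not controlled by $\nu(g)$ at all.

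Your norm bookkeeping is circular at precisely the decisive point. If both factors really had the form $[w,kgk^{-1}]$, then $\nu(h)\leq 4\nu(g)$ would be immediate, and nothing about $\nu(U),\nu(V)$ would ever need ``controlling'': the estimate $\nu([W,g'])\leq\nu(Wg'W^{-1})+\nu(g')=2\nu(g')$ is insensitive to the size of $W$. The genuine difficulty, which your sketch never addresses, is the second factor $[x,y]$: it is a commutator of two elements of \emph{unbounded} norm and is not a commutator with a conjugate of $g$. This is where \cite{BIP} needs the one idea absent from your proposal: the copies are placed on \emph{even} powers $g^{2i}$ precisely so that conjugation by $g$ carries the stacked subgroup into the odd copies, i.e.\ $g$ itself $1$-displaces the subgroup containing $x$ and $y$ (here the hypothesis of $m$-displacement for all $m$ enters, with $m=2s-1$); then the displacement identity $[x,y]=[x,[y,g]]$, valid because $x$ commutes with $gy^{\pm 1}g^{-1}$, gives $\nu([x,y])\leq 2\nu([y,g])\leq 4\nu(g)$, uniformly in $s$ and in the $a_i,b_i$. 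Altogether $\nu(h)\leq 2\nu(g^2)+4\nu(g)\leq 8\nu(g)\leq 14\nu(g)$ (the constant $14$ in \cite{BIP} reflects their particular arrangement; any constant at most $14$ proves the statement). Without this step your argument yields no bound whatsoever on the second commutator, so part (2), which you describe as essentially mechanical bookkeeping, is in fact where the theorem lives---and part (1) as well only comes out once the stacked element is the tail product rather than your $U$.
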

It follows from (1) a weaker version of Lemma 3.1.
\begin{cor}
Suppose that  $B$ is a ball and $\overline B\s U$, where $U$ is
open. Then any homeomorphism supported in $B$ can be written as a
product of two commutators of elements of $\H_U(M)$.
\end{cor}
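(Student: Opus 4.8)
We want to show: for a ball $B$ with $\overline B\s U$ open, any homeomorphism supported in $B$ is a product of two commutators of elements of $\H_U(M)$. The natural approach is to invoke Theorem 5.7 with $G=\H_U(M)$ and $H=\H_B(M)$, taking the conjugation-invariant norm $\nu$ to be arbitrary (we only need part (1), the bound $\cl_G(h)\le 2$, so the choice of $\nu$ is immaterial). To apply the theorem I need two things. First, that $h\in[\H_B(M),\H_B(M)]$, so that $h$ genuinely lies in $[H,H]$; and second, the displacement hypothesis, namely an element $g\in\H_U(M)$ that $m$-displaces $\H_B(M)$ for every $m\ge 1$.

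First I would settle the displacement hypothesis, which is where the geometry of the ball enters. The construction is exactly the one used in the proof of Lemma 3.1: pass to a slightly larger ball $B'$ with $\overline B\s B'\s\overline{B'}\s U$, fix a boundary point $p\in\p_{B'}$, and choose a pairwise disjoint, locally finite sequence of balls $(B_k)_{k=0}^{\infty}$ inside $B'$ with $B_0=B$ and $B_k\to p$. Since $\H_U(M)$ acts transitively on balls in $B'$ (as cited from \cite{hir}), I can pick $g=\phi\in\H_U(M)$ with $\phi(B_{k-1})=B_k$. Because the $B_k$ are pairwise disjoint, $\phi^i\H_B(M)\phi^{-i}$ is supported in $B_i$, so these subgroups pairwise commute for distinct indices; hence this single $\phi$ $m$-displaces $\H_B(M)$ for every $m\ge 1$. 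That is precisely the hypothesis of Theorem 5.7.

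For the first point, that $h\in[\H_B(M),\H_B(M)]$: since the statement claims the conclusion for \emph{any} homeomorphism supported in $B$, and $\H_B(M)$ is perfect (it is of the form $\H_c$ of a ball, for which Theorem 1.1 applies), every such $h$ lies in $[\H_B(M),\H_B(M)]=H$. Thus $h\in[H,H]$ as needed, and Theorem 5.7(1) yields $\cl_G(h)\le 2$, i.e. $h$ is a product of at most two commutators of elements of $G=\H_U(M)$, which is the assertion.

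The main obstacle is really just the bookkeeping in the displacement step: one must verify carefully that a single $\phi$ works \emph{simultaneously} for all $m$, not merely for each fixed $m$. This follows because the full infinite family $\{\phi^i\H_B(M)\phi^{-i}\}_{i\ge 0}$ is supported in the pairwise-disjoint $B_i$ and hence consists of pairwise-commuting subgroups; restricting to any finite initial segment gives the required $m$-displacement. Everything else is a direct application of Theorem 5.7, so no further delicate estimates are needed.
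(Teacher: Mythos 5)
Your proposal is correct and is essentially the paper's own argument: the corollary is stated there as an immediate consequence of the Burago--Ivanov--Polterovich displacement theorem (Theorem 5.6(1)), with the hypothesis $h\in[H,H]$ supplied by perfectness of $\H_B(M)\cong\H_c(\R^n)$ and the displacing element $g\in\H_U(M)$ supplied by exactly the translation $\phi$ from the proof of Lemma 3.1 (equivalently, Prop.\ 6.2). Your added verification that the single $\phi$ works simultaneously for all $m$, because the conjugates $\phi^i\H_B(M)\phi^{-i}$ are supported in the pairwise disjoint balls $B_i$, is the right justification and is what the paper leaves implicit.
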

However, contrary to Lemma 3.1, the method based on Theorem 5.6(1)
is still true in the smooth category.

\section{Boundedness of $\hm$ and $\hdm$}
The proof of the following theorem is essentially  in \cite{BIP}.

\begin{thm}
Let $B$ be a ball or a half-ball in $M$ (in the latter case we
assume $n\geq 2$). Then $\H_B(M)$ is bounded.
\end{thm}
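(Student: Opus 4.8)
The plan is to deduce boundedness from the displacement principle of Theorem 5.6 by exhibiting, inside $G:=\H_B(M)$, a subgroup that is at once perfect and infinitely displaceable, and then transporting an arbitrary element into that subgroup by conjugation. Let $\nu$ be an arbitrary conjugation-invariant norm on $G$; since $G$ is bounded exactly when every such $\nu$ is bounded, it suffices to produce a bound for $\nu$.

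First I would fix a smaller ball (resp. half-ball) $B_0$ with $\cl(B_0)\s B$ and set $H:=\H_{B_0}(M)\leq G$. By the Alexander trick (Section 2), $H$ consists of \emph{all} homeomorphisms compactly supported in $B_0$; moreover $H$ is isomorphic to $\H_c(\R^n)$ (resp. $\H_c(\R^n_+)$, where we use $n\geq 2$), which is perfect by Theorem 1.1, so $[H,H]=H$. Next I would construct $g\in G$ that $m$-displaces $H$ for every $m\geq 1$. Since $B$ is an open ball, it is homeomorphic to $\R^n$, and I can choose $g$ supported in $B$ pushing $B_0$ steadily toward the frontier of $B$ so that the sets $g^k(B_0)$, $k=0,1,2,\dots$, are pairwise disjoint; for a half-ball I instead push $B_0$ parallel to $\p_B$, which is possible precisely because $n\geq 2$. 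Then $H,gHg^{-1},\dots,g^mHg^{-m}$ are supported in pairwise disjoint sets, hence pairwise commute, so $g$ $m$-displaces $H$ for all $m$. Theorem 5.6(2) now yields $\nu(h)\leq 14\,\nu(g)$ for every $h\in[H,H]=H$.

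Finally, to pass from $H$ to all of $G$, take any $f\in G$. Its support is a compact subset $K\s B$, so by the Alexander trick there is $\psi\in G$ with $\psi(K)\s B_0$; then $\psi f\psi^{-1}$ is compactly supported in $B_0$ and hence lies in $H$. By conjugation-invariance $\nu(f)=\nu(\psi f\psi^{-1})\leq 14\,\nu(g)$. Thus $\nu$ is bounded on $G$ by the single constant $14\,\nu(g)$, and since $\nu$ was arbitrary, $\H_B(M)$ is bounded.

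The hard part will be the reduction step: ensuring that every element of $\H_B(M)$ is conjugate \emph{within} $\H_B(M)$ into the fixed subgroup $H$, i.e. that the conjugating $\psi$ can be taken compactly supported in $B$ and carries the whole support into $B_0$ while keeping $\psi f\psi^{-1}$ genuinely in $\H_{B_0}(M)$. This is exactly where the Alexander trick and the non-compactness of the (half-)ball are essential, and it is also what forces the restriction $n\geq 2$ in the half-ball case, since for $n=1$ the fixed boundary point cannot be displaced off a neighborhood of itself and neither the displacement of $H$ nor the conjugation into $B_0$ goes through.
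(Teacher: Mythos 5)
Your proof is correct, and while it rests on the same engine as the paper's --- perfectness of homeomorphism groups of balls (Alexander trick plus Theorem 1.1) combined with infinite displacement and the Burago--Ivanov--Polterovich estimate of Theorem 5.6(2) --- it is executed differently, and the difference matters. The paper displaces the whole group $\H_B(M)$ in one stroke: it picks $V\s M$ disjoint from $B$ and $f\in\H_c(M)$ with $\overline{f(B\cup V)}\s V$, invokes Prop.\ 6.2 to see that $f$ $k$-displaces $\H_B(M)$ for all $k$, and concludes by Theorems 1.1 and 5.6(2). That argument is shorter, but its displacing element $f$ lies \emph{outside} $\H_B(M)$, so as written it bounds $\nu$ on $\H_B(M)$ only when $\nu$ is (the restriction of) a conjugation-invariant norm on the ambient group $\H_c(M)$ --- a relative form of boundedness, which is in fact all that the paper's applications (Theorems 1.4, 1.5, 7.4) require, since the norms there live on $\hm$ or $\hdm$. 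Your version stays entirely inside $G=\H_B(M)$: you displace the smaller subgroup $\H_{B_0}(M)$ by some $g\in\H_B(M)$ (your ``pushing'' construction is exactly the disjoint-balls construction of Lemma 3.1, so it is available verbatim, including the half-ball case with $n\geq 2$), and then conjugate an arbitrary $f\in\H_B(M)$ into $\H_{B_0}(M)$ by $\psi\in\H_B(M)$ carrying $\supp(f)$ into $B_0$. Because both $g$ and $\psi$ belong to $\H_B(M)$, your estimate $\nu(f)\leq 14\nu(g)$ applies to an \emph{arbitrary} conjugation-invariant norm on the abstract group $\H_B(M)$, which is the literal content of the theorem as stated; so the extra reduction step you singled out as the hard part is precisely what upgrades the paper's relative statement to the absolute one.
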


For the proof we need the following

\begin{prop} \cite{BIP}
Suppose that $U,V$ are open disjoint subsets of $M$ such that
there is $f\in\H_c(M)$ satisfying $\overline{f(U\cup V)}\s V$.
Then $f$ $k$-displaces $\H_U(M)$ for all $k\geq 1$.

\end{prop}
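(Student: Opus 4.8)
The plan is to verify the definition of $k$-displaceability directly, namely to check that the subgroups $\H_U(M), f\H_U(M)f^{-1},\dots,f^k\H_U(M)f^{-k}$ pairwise commute. The key elementary fact I would use is that two homeomorphisms with disjoint supports commute, so it suffices to separate the supports of elements lying in distinct conjugates. Since every $h\in\H_U(M)$ has compact support contained in $U$, the conjugate $f^jhf^{-j}$ has support contained in $f^j(U)$; thus it is enough to prove that the iterated images $U=f^0(U),f^1(U),\dots,f^k(U)$ are pairwise disjoint.

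First I would extract from the hypothesis $\overline{f(U\cup V)}\s V$ two consequences. Using $f(U\cup V)=f(U)\cup f(V)\s\overline{f(U\cup V)}\s V$ and restricting to each piece gives $f(U)\s V$ and $f(V)\s V$; in particular $V$ is forward-invariant under $f$. An immediate induction then yields $f^m(U)\s V$ for every $m\ge 1$: indeed $f^m(U)=f^{m-1}(f(U))\s f^{m-1}(V)\s\dots\s f(V)\s V$, applying forward-invariance of $V$ at each step.

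To conclude pairwise disjointness, take $0\le i<j\le k$ and set $m=j-i\ge 1$. Since $f$ is a bijection, $f^i(U)\cap f^j(U)=\emptyset$ is equivalent to $U\cap f^m(U)=\emptyset$ (apply $f^{-i}$ to both sets). But $f^m(U)\s V$ by the previous step, while $U\cap V=\emptyset$ by hypothesis, so $U\cap f^m(U)=\emptyset$. Hence all the sets $f^0(U),\dots,f^k(U)$ are pairwise disjoint, the corresponding conjugate subgroups consist of homeomorphisms with pairwise disjoint supports, and therefore pairwise commute. As $k$ was arbitrary, the single map $f$ $k$-displaces $\H_U(M)$ for every $k\ge 1$.

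The argument is almost entirely formal, so I do not expect a genuine obstacle; the only points requiring care are the support bookkeeping under conjugation — confirming that conjugating $\H_U(M)$ by $f^j$ produces homeomorphisms supported in $f^j(U)$ — and the observation that forward-invariance of $V$ delivers disjointness of \emph{all} iterates uniformly, which is precisely what allows one fixed $f$ to work simultaneously for every $k$ rather than only for a prescribed order.
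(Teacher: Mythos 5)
Your proof is correct and follows essentially the same route as the paper: both reduce $k$-displaceability to pairwise disjointness of the iterates $f^j(U)$, which forces the conjugate subgroups $f^j\H_U(M)f^{-j}$ to have disjoint supports and hence to commute elementwise. The paper merely compresses the disjointness argument into the single relation $f^k(U)\s f^{k-1}(V)\setminus f^k(V)$ (placing the iterates in disjoint ``annuli'' of the nested sets $f^k(V)$), while you obtain it by pulling back with $f^{-i}$ and using $f^{j-i}(U)\s V$ together with $U\cap V=\emptyset$ --- the same elementary facts in slightly different packaging.
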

\begin{proof}Indeed, this follows from the relation $f^k(U)\s
f^{k-1}(V)\setminus f^k(V)$ for all $k\geq 1$.
\end{proof}

\noindent\emph{Proof of Theorem 6.1} We can choose an open subset
$V$ of $M$ disjoint with $B$ and a homeomorphism $f\in\hm$ such
that $\overline{f(B\cup V)}\s V$. In view of Prop. 6.2 $f$
$k$-displaces $\H_B(M)$ for all $k$. Therefore Theorems 1.1 and
5.7(2) imply the assertion. \quad$\square$

\medskip

\noindent\emph{Proof of Theorem 1.4} The part only if is trivial.
Conversely, the proof is an immediate consequence of Prop. 5.2 and
Theorem 6.1 except for the case $n=1$ and $\pm\neq\emptyset$ (see
the proof of 1.5). \quad$\square$
\medskip

Now we turn to the proof of Theorem 1.5. Let $\R_+=[0,\infty)$. We
begin with the following

\begin{prop}
For any decreasing sequence in (0,1) of the form
\[1>b_1>a_1>b_2>a_2>\ldots>b_k>a_k>\ldots>0,\] converging to 0, there exist $f_1,f_2\in \H(\R_+)$
 such that for $k=1,2,\ldots$ one has
\[f_1([a_{2k-1},b_{2k-1}]\cup[ a_{2k}, b_{2k}])\s (
a_{2k}, b_{2k}),\]
\[f_2([a_{2k},b_{2k}]\cup[ a_{2k+1}, b_{2k+1}])\s (
a_{2k+1}, b_{2k+1}).\] Moreover, if we have another sequence
\[1>\tilde b_1>\tilde a_1>\tilde b_2>\tilde a_2>\ldots>\tilde
b_k>\tilde a_k>\ldots>0,\]   then there is an element of $\psi\in
\H(\R_+)$  with $\psi(a_k)=\tilde a_k$ and $\psi(b_k)=\tilde b_k$
for $k=1,2,\ldots$.
\end{prop}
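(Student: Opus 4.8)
The plan is to realize everything by hand, exploiting the fact that $\H(\R_+)$ consists precisely of the increasing homeomorphisms of $[0,\infty)$ fixing $0$. Indeed, every self-homeomorphism of $[0,\infty)$ fixes the unique endpoint $0$ and is therefore increasing, and each such $f$ lies in the identity component, since the straight-line homotopy $t\mapsto(1-t)\id+tf$ joins $f$ to $\id$ through increasing homeomorphisms fixing $0$. Hence it suffices to produce increasing homeomorphisms with the stated behaviour, and a continuous strictly increasing bijection of $[0,\infty)$ onto itself is automatically such a homeomorphism.

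First I would construct $f_1$. For each $k$ set $J_k:=[a_{2k},b_{2k-1}]$; since $b_{2k-1}>a_{2k-1}>b_{2k}>a_{2k}$, the interval $J_k$ contains both $[a_{2k-1},b_{2k-1}]$ and $[a_{2k},b_{2k}]$, the $J_k$ are pairwise disjoint, and they decrease to $0$. I would then define $f_1$ on each $J_k$ to be an increasing homeomorphism onto a compact subinterval $[\alpha_k,\beta_k]$ chosen with $a_{2k}<\alpha_k<\beta_k<b_{2k}$; this forces $f_1(J_k)\s(a_{2k},b_{2k})$, which is exactly the first required inclusion. On the complementary gaps between consecutive $J_k$ and on $[b_1,\infty)$ I would fill in $f_1$ by any monotone interpolation. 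One checks that the prescribed values glue in the correct order (e.g. on the gap $(b_{2k-1},a_{2k-2})$ one has $\beta_k<b_{2k}<a_{2k-2}<\alpha_{k-1}$), so $f_1$ is a well-defined strictly increasing continuous bijection; finally I set $f_1(0)=0$.

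The construction of $f_2$ is identical after reindexing: I replace $J_k$ by $[a_{2k+1},b_{2k}]$, which contains both $[a_{2k},b_{2k}]$ and $[a_{2k+1},b_{2k+1}]$, and compress it into $(a_{2k+1},b_{2k+1})$. For the final assertion I would build $\psi$ directly: prescribe $\psi(a_k)=\tilde a_k$ and $\psi(b_k)=\tilde b_k$ for all $k$, interpolate monotonically (say affinely) on each resulting complementary interval, extend above $b_1$ by any increasing homeomorphism $[b_1,\infty)\r[\tilde b_1,\infty)$, and put $\psi(0)=0$. Because both sequences are strictly decreasing with the same interlacing pattern $b_k>a_k>b_{k+1}$, the prescribed values are already in the correct order, so $\psi$ is a well-defined strictly increasing bijection.

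The only point requiring genuine care is continuity at the accumulation point $0$. For $f_1$ one has $f_1([0,a_{2k}])\s[0,\alpha_k]$ with $\alpha_k<b_{2k}\to0$, so given $\ee>0$ one picks $k$ with $b_{2k}<\ee$ and gets $f_1(x)<\ee$ for all $x<a_{2k}$; this yields continuity and surjectivity at $0$, and since $f_1$ is a continuous increasing bijection it is a homeomorphism. The same squeeze, using $b_{2k}\to0$ for $f_2$ and $\tilde b_k\to0$ for $\psi$, handles the remaining maps. Everything else is a routine gluing of monotone pieces.
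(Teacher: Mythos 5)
Your proposal is correct and takes essentially the same route as the paper: there, too, $f_1$ and $f_2$ are built by hand as unions of homeomorphisms compressing each block (e.g.\ $[a_{2k},b_{2k-1}]$) into the corresponding target interval $(a_{2k},b_{2k})$, and $\psi$ is obtained by gluing linear homeomorphisms on the consecutive intervals. Your write-up merely makes explicit the routine points the paper leaves implicit --- monotone interpolation on the gaps, continuity at the accumulation point $0$, and membership in the path component of the identity via the straight-line isotopy.
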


\begin{proof} In order to prove the first assertion it suffices to
choose $f_1$ (and similarly $f_2$) of the form
$\phi=\bigcup_{k=1}^{\infty}\phi_k$ with
$\phi_k([a_{2k-1},b_{2k-1}]\cup[ a_{2k}, b_{2k}])\s ( a_{2k},
b_{2k})$ for all $k$ and with $\supp(\phi_k)$ mutually disjoint.
The $\psi$ in the second assertion is obtained by gluing together
linear homeomorphisms on the consecutive intervals $[a_1,1]$,
$[b_1,a_1]$, and so on.
\end{proof}
\medskip
\noindent\emph{Proof of Theorems 1.4 (for $n=1$ and
$\pm\neq\emptyset$) and 1.5.}  Let $P=\p\t[0,1]$ be a collar
neighborhood embedded in $M$ such that
 $\p$ identifies with $\p\t\{0\}$.
Since $\p=\pm$ is compact, in view of Theorem 2.5 the restriction
mapping $$\hm\ni f\mapsto f|_{\p}\in\H(\pm)$$ is an epimorphism.
It follows from Lemma 1.10 in \cite{BIP} that $\H_c(\pm)$ is
bounded.  Thus it suffices to show the second assertion of Theorem
1.5. Let $g\in \H_c(\p\t\R_+)$. Arguing as in the proof of Theorem
1.2, there is a sequence, converging to 0, of the form
\[1>b_1>\bar b_1>\bar a_1>a_1>b_2>\ldots>b_k>\bar b_k>\bar
a_k>a_k>\ldots>0\] and homeomorphisms $h_1,h_2\in \H_c(\p\t\R_+)$
such that
\begin{equation*}h_1=g\;\hbox{ on }\;\bigcup_{k=1}^{\infty} \p\t[\bar a_{2k-1},\bar
b_{2k-1}],\quad \supp(h_1)\s U_1:=\bigcup_{k=1}^{\infty} \p\t(
a_{2k-1}, b_{2k-1}),\end{equation*} \begin{equation*}h_2=g\;\hbox{
on} \;\bigcup_{k=1}^{\infty} \p\t[\bar a_{2k},\bar b_{2k}],\quad
\supp(h_2)\s U_2:=\bigcup_{k=1}^{\infty} \p\t( a_{2k},
b_{2k}).\end{equation*} Continuing the reasoning from the proof of
Theorem 1.2 for $h'=h^{-1}g$, it can be checked that $g$ admits a
decomposition of the form $$g=h_1h_2h_3h_4,$$ where
$$h_3=g\;\hbox{ on}\; \bigcup_{k=1}^{\infty} \p\t[ b_{2k},
a_{2k-1}],\quad \supp(h_3)\s U_3:=\bigcup_{k=1}^{\infty} \p\t(
\bar b_{2k}, \bar a_{2k-1}),$$ $$h_4=g\;\hbox{ on}
\;\bigcup_{k=0}^{\infty} \p\t[ b_{2k+1}, a_{2k}],\quad
\supp(h_4)\s U_4:=\bigcup_{n=0}^{\infty} \p\t( \bar b_{2k+1}, \bar
a_{2k}),$$ and where $a_0,\bar a_0$ satisfy $1> \bar a_0> a_0>
b_1$.
 Furthermore, $h_j$ satisfy conditions analogous to  (3.4) for $j=1,2,3,4$.

 In view of
 Prop. 6.3 there exist
$\bar f_{j}\in  \H_c^{\p}(\p\t\R_+)$ of the form $\bar f_j=\id\t
f_j$ such that $\H_{U_j}(M)$ is $m$-displaceable by $\bar f_{j}$
for $j=1,2,3,4$ and for all $m\geq 1$.

Let $\nu$ be a conjugation-invariant norm on $\hdm$. In view of
Theorem 5.6(2) and the invariance of $\nu$ we have
\begin{equation*}
\nu(g)\leq\nu(h_1)+\cdots+\nu(h_4)\leq 14(\nu(\bar
f_1)+\cdots\nu(\bar f_4)).\end{equation*}

Observe that the sets $U_1\ld U_4$ depend  on $g$. Nevertheless,
in view of the second assertion of Prop. 6.3 and the invariance of
$\nu$, the norms $\nu(\bar f_j)$ are independent of $g$. It
follows
 that $\nu(g)$ is bounded, as required.
 \quad$\square$

\begin{dff}
A connected open manifold $M$ is called \emph{portable (in the
wider sense)} if there are disjoint open subsets $U$, $V$ of $M$
such that there is $f\in \H_c(M)$ with  $\overline{f(U\cup V)}$
contained in $V$. Furthermore, for every compact subset $K\s M$
there is $h\in \H_c(M)$ satisfying $h(K)\s U$.
\end{dff}

\begin{rem}
The notion of a portable manifold has been introduced in
\cite{BIP} for smooth open manifolds. The definition there is
specific for smooth category and a bit stronger than Def. 6.4 (a
definition similar to 6.4 is also mentioned in \cite{BIP}).
\end{rem}

The class of portable manifolds comprises the euclidean spaces
$\R^n$, the manifolds of the form $M\t\R^n$, or the manifolds
admitting an exhausting Morse function with finite numbers of
critical points such that all their indices are less that
$\frac{1}{2}\dim M$. In particular, every three-dimensional
handlebody is a portable manifold.

\begin{thm}
If $M$ is portable that $\H_c(M)$ is bounded.
\end{thm}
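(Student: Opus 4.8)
The plan is to combine the displacement-energy estimate of Theorem 5.7(2) with the two ingredients packaged into portability (Def. 6.4): a distinguished subgroup that is displaceable, and the ability to push any compactly supported isotopy into it. First I would fix the disjoint open sets $U,V$ and the homeomorphism $f\in\H_c(M)$ with $\overline{f(U\cup V)}\s V$ supplied by Def. 6.4. By Prop. 6.2 this $f$ then $m$-displaces $\H_U(M)$ for every $m\geq 1$. Next I would record that $\H_U(M)$ is perfect: since $M$ is an open (boundaryless) manifold, $U$ is an open submanifold, hence $\sigma$-compact and admitting a compact exhaustion, so Theorem 1.1 applies to $U$ and shows $\H_U(M)\cong\H_c(U)$ coincides with its commutator subgroup. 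Applying Theorem 5.7(2) with the ambient group $\H_c(M)$, the subgroup $\H_U(M)$, and the displacing element $f$, I then obtain for any conjugation-invariant norm $\nu$ on $\H_c(M)$ the uniform estimate $\nu(k)\leq 14\,\nu(f)$ for every $k\in\H_U(M)=[\H_U(M),\H_U(M)]$.

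The remaining task is to reduce an arbitrary $g\in\H_c(M)$ to an element of $\H_U(M)$ by conjugation, which is where I would invoke the second clause of portability. Because $M$ admits a compact exhaustion, $g$ is joined to $\id$ by a compactly supported isotopy $g_t$; set $K=\bigcup_{t\in I}\supp(g_t)$, which is compact. By Def. 6.4 there is $h\in\H_c(M)$ with $h(K)\s U$. Then $t\mapsto hg_th^{-1}$ is an isotopy from $\id$ to $hgh^{-1}$ each member of which is supported in $h(K)\s U$, so $hgh^{-1}\in\H_U(M)$. Invariance of $\nu$ now yields $\nu(g)=\nu(hgh^{-1})\leq 14\,\nu(f)$, a bound independent of $g$. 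Since $\nu$ was arbitrary, every conjugation-invariant norm on $\H_c(M)$ is bounded, i.e. $\H_c(M)$ is bounded.

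The step I expect to require the most care is this final conjugation: it is essential to displace the support of the \emph{entire} isotopy $g_t$ rather than merely $\supp(g)$, so that the conjugate $hgh^{-1}$ lands in the path-connected identity component $\H_U(M)$ and not just in the set of homeomorphisms supported in $U$. This is exactly the place where one uses that elements of $\H_c(M)$ are connected to the identity through uniformly (compactly) supported isotopies, a fact guaranteed by the compact-exhaustion hypothesis (and used already in the proof of Theorem 1.1). A secondary point worth verifying is the perfectness of $\H_U(M)$ when $U$ is possibly disconnected, which reduces componentwise to Theorem 1.1.
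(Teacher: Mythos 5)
Your proposal is correct and follows essentially the same route as the paper: the paper's proof of this theorem is only a pointer to Prop.\ 5.2 and to the diffeomorphism case (Theorem 1.7 of \cite{BIP}), and what you wrote is precisely that argument transplanted to homeomorphisms --- conjugate the whole compactly supported isotopy of $g$ into $U$ using the second clause of portability, then apply Prop.\ 6.2 and Theorem 5.7(2) to $\H_U(M)$, whose perfectness (via Theorem 1.1, i.e.\ via fragmentation) is exactly where Prop.\ 5.2 enters the paper's version. Your closing caveats (displacing the support of the entire isotopy, and perfectness of $\H_U(M)$ componentwise) are the right points to flag and are handled at the same level of rigor as the paper itself.
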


The proof is a consequence of Prop. 5.2, and is completely
analogous to that for diffeomorphisms (Theorem 1.7 in \cite{BIP}).

 \begin{cor}
 If $M^o$ is portable then $\hdm$ is bounded.
\end{cor}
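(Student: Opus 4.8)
The plan is to derive the corollary directly from the two boundedness results already established: Theorem 6.6 applied to the open manifold $M^o$, together with the second assertion of Theorem 1.6.

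First I would apply Theorem 6.6 with $M^o$ playing the role of the manifold there. By hypothesis $M^o$ is portable, and since it is a connected open manifold, Def. 6.4 applies to it verbatim; hence Theorem 6.6 gives at once that $\H_c(M^o)$ is bounded.

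Next I would invoke the second part of Theorem 1.6. Its hypothesis is precisely that $\H_c(M^o)$ be bounded, which the previous step has supplied, while its standing assumption is that $\pm$ be compact. Under these conditions Theorem 1.6 yields that $\hdm$ is bounded, which is exactly the assertion of the corollary.

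The only matter requiring attention is the verification that the hypotheses of the two cited theorems are genuinely in force: that $M^o$ qualifies as the ``$M$'' of Def. 6.4 and Theorem 6.6 (it does, being open and, as part of portability, connected), and that $\pm$ is compact so that Theorem 1.6 is available. There is no substantive obstacle beyond this bookkeeping; all the real content --- the order-$m$ displacement estimate of Theorem 5.7 underlying Theorem 6.6, together with the collar-neighborhood and restriction argument behind Theorem 1.6 --- is already packaged in the statements we are permitted to use, so the corollary follows by merely chaining them.
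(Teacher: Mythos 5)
Your proof is correct and is essentially the paper's own argument: Theorem 6.6 applied to the portable open manifold $M^o$ gives that $\H_c(M^o)$ is bounded, and the second assertion of Theorem 1.5 (under its standing hypothesis that $\pm$ is compact) then yields that $\hdm$ is bounded. The only slip is one of numbering: the result you repeatedly cite as ``Theorem 1.6'' is actually Theorem 1.5 --- Theorem 1.6 concerns perfectness and acyclicity of universal covering groups, not boundedness --- but the statement you describe and use (collar neighborhood, restriction to the boundary, $\H_c(M^o)$ bounded implies $\hdm$ bounded) is unmistakably that of Theorem 1.5, so the argument stands as written.
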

\medskip
The proof follows from Theorems 1.5 and 6.6. In contrast, for
diffeomorphism groups we have  the following
\begin{prop}
Let $M$ be a smooth manifold with boundary  and let $\D^{\p}(M)$
be the subgroup of all $f\in\D(M)$ such that there exists a
compactly supported isotopy $f_t$ with $f_0=\id$ and $f_1=f$
satisfying $f_t|_{\pm}=\id$ for all $t$. Then $\D^{\p}(M)$ is an
unbounded group.
\end{prop}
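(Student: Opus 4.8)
The plan is to produce a nontrivial homomorphism from $\D^{\p}(M)$ to the additive group $\R$ and then observe that any group carrying such a homomorphism with unbounded image must be unbounded. Assume $\pm\neq\emptyset$ (otherwise $\D^{\p}(M)=\D(M)$) and fix a boundary point $p_0\in\pm$. For $f\in\D^{\p}(M)$ one has $f=\id$ on $\pm$, so at each $p\in\pm$ the differential $D_pf$ preserves $T_p\pm$ and is the identity there; hence $D_pf$ descends to the normal line $T_pM/T_p\pm$, on which it acts as multiplication by a scalar $\lambda_f(p)$. Since $f$ maps $M$ into itself carrying inward normals to inward normals, $\lambda_f(p)>0$, and $\lambda_f(p)$ is independent of any auxiliary choices.

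First I would check multiplicativity. Every $g\in\D^{\p}(M)$ fixes $\pm$ pointwise, so for $p\in\pm$ we have $g(p)=p$, and the chain rule gives $D_p(fg)=D_pf\circ D_pg$. Passing to the normal quotient yields $\lambda_{fg}(p)=\lambda_f(p)\lambda_g(p)$. Consequently the map $\Phi:\D^{\p}(M)\r\R$ defined by $\Phi(f):=\log\lambda_f(p_0)$ is a group homomorphism (the logarithm turns the multiplicative normal derivative into an additive invariant, exactly the $s=0$ phenomenon of Remark 3.4(3)).

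Next I would show $\Phi$ is surjective. Fix a boundary chart identifying a neighborhood of $p_0$ with the half-space $\R^{n-1}\t[0,\infty)$ and $p_0$ with $0$. For $\lambda>0$ choose an increasing diffeomorphism $g_\lambda$ of $[0,\infty)$ with $g_\lambda(0)=0$, $g_\lambda'(0)=\lambda$ and $g_\lambda(t)=t$ for large $t$ (these exist for every $\lambda>0$), together with a bump function $\beta$ on $\R^{n-1}$ of compact support with $\beta(0)=1$. Put $G(y,t)=t+\beta(y)(g_\lambda(t)-t)$ and define $f(y,t)=(y,G(y,t))$ on the chart, $f=\id$ elsewhere. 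Then $f$ is a compactly supported diffeomorphism with $f|_{\pm}=\id$ and $\partial_tG(y,0)=1+\beta(y)(\lambda-1)$, so $\lambda_f(p_0)=\lambda$, i.e. $\Phi(f)=\log\lambda$. Moreover $f_s(y,t)=(y,\,t+s\beta(y)(g_\lambda(t)-t))$ has $t$-derivative $(1-s\beta(y))+s\beta(y)g_\lambda'(t)>0$, so the $f_s$ are diffeomorphisms fixing $\pm$ pointwise; this is a compactly supported isotopy in $\D^{\p}(M)$ from $\id$ to $f$, showing $f\in\D^{\p}(M)$. Letting $\lambda$ range over $(0,\infty)$ makes $\Phi$ surjective.

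Finally I would deduce unboundedness. Let $\mu$ be the discrete conjugation-invariant norm ($\mu(g)=1$ for $g\neq\id$, $\mu(\id)=0$) and set $\nu:=|\Phi|+\mu$. Then $\nu$ is a conjugation-invariant norm: it is symmetric and subadditive because $\Phi$ is a homomorphism into the abelian group $\R$ and $\mu$ is a norm, it is conjugation-invariant since $\R$ is commutative (so $|\Phi|$ is invariant) and $\mu$ is invariant, and $\nu(g)\geq\mu(g)>0$ for $g\neq\id$. As $\Phi$ is surjective, $\nu$ is unbounded, whence $\D^{\p}(M)$ is unbounded. The step requiring the most care is the conceptual core, namely the well-definedness of the normal-derivative invariant $\lambda_f$ and its homomorphism property, together with verifying that the explicit $f_s$ genuinely lie in $\D^{\p}(M)$; once the unbounded homomorphism $\Phi$ is in hand, the passage to an unbounded conjugation-invariant norm is routine.
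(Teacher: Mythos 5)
Your proof is correct and takes essentially the same route as the paper: the paper defines the epimorphism $\D^{\p}(M)\ni f\mapsto \jac_p(f)\in\R_+$ at a boundary point $p$ (which, since $D_pf$ is the identity on $T_p\pm$, is precisely your normal-derivative invariant $\lambda_f(p)$), and then concludes unboundedness by citing Prop.~1.3 and Lemma~1.10 of \cite{BIP}, i.e.\ the fact that a group surjecting onto an unbounded abelian group is unbounded. Your write-up merely makes self-contained what the paper delegates to citations and to the reader: the multiplicativity and surjectivity of the invariant, and the explicit unbounded conjugation-invariant norm $|\Phi|+\mu$.
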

\begin{proof}
Choose a chart at $p\in\pm$. Then there is  the epimorphism
$$\D^{\p}(M)\ni f\mapsto \jac_p(f)\in\R_+,$$ where $\jac_p(f)$ is the
Jacobian of $f$ at $p$ in this chart. In view of Prop. 1.3 in
\cite{BIP}, an abelian group is bounded if and only if it is
finite. Therefore $\R_+$ is unbounded. Now Lemma 1.10 in
\cite{BIP} implies that $\D^{\p}(M)$ is unbounded.
\end{proof}

\begin{exa}
Let $\bar B^{n+1}\s\R^{n+1}$ be the closed ball and $S^n=\p\bar
B^{n+1}$. Then $\H_c(S^{n})$ is bounded by an argument similar to
that of Theorem 1.11(ii) in \cite{BIP} stating that $\D(S^{n})$ is
bounded. Next, $\H_c(B^{n+1})$ is bounded in view of Theorem 6.6,
where $B^{n+1}$ is the interior of $\bar B^{n+1}$. Hence, due to
Theorem 1.5 the group $\H^{\p}(\bar B^{n+1})$ are bounded.
\end{exa}

\section{The universal covering groups of $\hm$ and $\hdm$}

Let $G$ be  a topological group.   The symbol $\tilde G$ will
stand for the universal covering group of $G$, that is $\tilde
G=\mathcal{P}G/_{ \sim}$, where $\sim$ denotes the relation of the
homotopy relatively endpoints.

We introduce the following two operations on the space of paths
$\P G$. Let $\mathcal{P}^{\star}G=\{ \gamma \in \mathcal{P}G:
\gamma (t)=e \quad \textrm{for} \quad t \in [0,\frac{1}{2}] \}$.
For all $\gamma \in \P G$ we define $\gamma^{\star}$ as follows:

\begin{equation}\nonumber
 \gamma^{\star}(t)=
\left\{
\begin{array}{lcl}
 e& for & t \in [0,\frac{1}{2}]\\
\gamma(2t-1)& for& t \in [\frac{1}{2},1]
\end{array}
\right.
\end{equation}
Then $\gamma^{\star}\in\P^{\star}G$ and  the subgroup $P^{\star}G$
is the image of $\P G$ by the mapping $\star:\gamma\mapsto
\gamma^{\star}$. The elements of $\P^{\star}G$ are said to be
\wyr{special} paths in $G$. It is important that the group of
special paths is preserved by conjugations, i.e. for each $g\in\P
G$ we have $\conj_g(\P^{\star}G)\s\P^{\star}G$ for every $g\in\P
G$, where  $\conj_g(h)=ghg^{-1}$, $h\in\P G$.

Next, let $\mathcal{P}^{\square}G=\{ \gamma \in \mathcal{P}G:
\gamma (t)=\gamma(1) \quad \textrm{for} \quad t \in [\frac{1}{2},
1] \}$. For all $\gamma \in \P G$ we define $\gamma^{\square}$ by

\begin{equation}\nonumber
 \gamma^{\square}(t)=
\left\{
\begin{array}{lcl}
 \gamma(2t)& for & t \in [0,\frac{1}{2}]\\
\gamma(1)& for& t \in [\frac{1}{2},1]
\end{array}
\right.
\end{equation}
As before $\gamma^{\square}\in\P^{\square}G$ and  the subgroup
$\P^{\square}G$ coincides with the image of $\P G$ by the mapping
$\square:\gamma\mapsto \gamma^{\square}$.

\begin{lem}\label{zero}
For any $\gamma \in \P G$ we have $\gamma \sim
{\gamma}^{\star}$ and $\gamma\sim\gamma^{\square}$.
\end{lem}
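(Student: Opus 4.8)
The plan is to notice that $\gamma^{\star}$ and $\gamma^{\square}$ are nothing but reparametrizations of $\gamma$, each obtained by precomposing with a monotone continuous self-map of $I$ fixing $0$ and $1$, and then to invoke the elementary fact that a path is homotopic rel endpoints to every such reparametrization. Concretely, I would set
\[ u_{\star}(t)=\begin{cases} 0 & t\in[0,\tfrac12],\\ 2t-1 & t\in[\tfrac12,1], \end{cases} \qquad u_{\square}(t)=\begin{cases} 2t & t\in[0,\tfrac12],\\ 1 & t\in[\tfrac12,1]. \end{cases} \]
Both maps are continuous and nondecreasing with $u_{\star}(0)=u_{\square}(0)=0$ and $u_{\star}(1)=u_{\square}(1)=1$, and directly from the definitions one has $\gamma^{\star}=\gamma\circ u_{\star}$ and $\gamma^{\square}=\gamma\circ u_{\square}$. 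In particular $\gamma$, $\gamma^{\star}$ and $\gamma^{\square}$ all begin at $e$ and terminate at $\gamma(1)$, so it is legitimate to compare them under $\sim$.

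First I would treat $\gamma^{\star}$. Since $I$ is convex, the formula
\[ H(t,s)=\gamma\big((1-s)\,u_{\star}(t)+s\,t\big) \]
defines a continuous map $H:I\times I\to G$. At $s=0$ it equals $\gamma\circ u_{\star}=\gamma^{\star}$, and at $s=1$ it equals $\gamma$. Moreover $H(0,s)=\gamma(0)=e$ and $H(1,s)=\gamma\big((1-s)+s\big)=\gamma(1)$ for every $s$, so $H$ fixes both endpoints and each partial path $H(\cdot,s)$ lies in $\P G$. Thus $H$ is a homotopy rel endpoints and $\gamma^{\star}\sim\gamma$. Replacing $u_{\star}$ by $u_{\square}$ in the very same formula yields $\gamma^{\square}\sim\gamma$ verbatim, since the only features of $u_{\star}$ used in the verification are its continuity and the normalization $u(0)=0$, $u(1)=1$, both of which $u_{\square}$ shares.

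I expect essentially no obstacle here: the entire content of the statement is that inserting a constant segment at the front (for $\star$) or at the tail (for $\square$) of $\gamma$ leaves its class in $\tilde G$ unchanged, which is the standard phenomenon that concatenation with a constant path is absorbed up to homotopy. The only points needing a line of checking are the continuity of $H$ (immediate, being a composite of continuous maps) and the fact that the convex combination $(1-s)u(t)+s\,t$ never leaves $[0,1]$ (immediate from convexity of $I$), both of which are routine.
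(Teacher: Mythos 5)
Your proof is correct and takes essentially the same approach as the paper: both arguments exhibit an explicit homotopy rel endpoints by deforming the time reparametrization, the paper via the piecewise formulas $\Gamma^{\star}(t,s)$ and $\Gamma^{\square}(t,s)$, and you via the convex interpolation $\gamma\bigl((1-s)u(t)+st\bigr)$. Your single formula handles the $\star$ and $\square$ cases uniformly, which is a slight streamlining, but the underlying idea is identical.
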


\begin{proof}
We have to find a homotopy $\Gamma$ rel. endpoints between
$\gamma$ and $\gamma^{\star}$. For all $s\in I$ define $\Gamma^{\star}$ as
follows:
\begin{equation}\nonumber
\Gamma^{\star}(t,s)=
\left\{
\begin{array}{lcl}
e& for & t\in [0,\frac{s}{2}]\\
\gamma(\frac{2t-s}{2-s})& for& t\in (\frac{s}{2},1]
\end{array}
\right.
\end{equation}
It is easy to check that such  $\Gamma^{\star}$ fulfils all the
requirements.\\
For the second claim define $\Gamma^{\square}$ as follows: for any $s \in I$
\begin{equation}\nonumber
\Gamma^{\square}(t,s)=
\left\{
\begin{array}{lcl}
\gamma(\frac{2t}{2-s})& for & t\in [0,\frac{2-s}{2}]\\
\gamma(1)& for& t\in (\frac{2-s}{2},1]
\end{array}
\right.
\end{equation}
\end{proof}

Given a group $G$  recall the definition of homology groups of
$G$. The usual construction of homology groups proceeds by
defining a \emph{standard chain complex} $C(G)$. Its homology is
the homology of $G$.

 The complex $C(G)$ is defined as follows. For
any  integer $r\geq 0$ denote
$$ C_r(G)\quad =\quad \hbox {free abelian group on the set of all $r$-tuples}\,(g_1,\ldots ,g_r),$$
where $g_i\in G$. Next introduce the \emph{boundary operator}
$\partial :C_r(G)\rightarrow C_{r-1}(G)$ by the formula
\begin{equation*}
\partial (g_1,\ldots ,g_r)=(g_1^{-1}g_2,\ldots
,g_1^{-1}g_r)+\sum_{i=1}^r (-1)^i(g_1,\ldots,\hat {g_i},\ldots
,g_r).\end{equation*}

 Then $\partial ^2=0$.
 Let $Z_{r}(G)=\{ c\in C_{r}(G): \partial(c)=0 \}$ and $B_{r}(G)=\{c \in C_{r}(G): (\exists b\in C_{r+1}(G)),\,
  \partial (b)=c  \}$.
The symbol $H_r(G)=Z_{r}(G)/B_{r}(G)$ will stand for the $r$-th homology group of
the above chain complex. It is well known that
$$  H_1(G)\,=\,G/[G,G],$$
that is, the first homology group is equal to the abelianization
of $G$. For any $g \in G$ the conjugation mapping $\conj_g:G\r G$
induces an identity so $(\conj_{g})_{\star}(h)=h$ for any $h \in
H_{r}(G)$, c.f. \cite{bro}.

 For any $g\in\P G$ denote $\tilde
g:=[g]_{\sim}\in\tilde G$ and for any $c\in C_r(\P G)$ of the form
$c=\sum k_j(g_{1j},\ldots ,g_{rj})$, where $k_j\in \Z$, denote by
$\tilde c:=\sum k_j(\tilde g_{1j},\ldots ,\tilde g_{rj})$ the
corresponding element of $C_r(\tilde G)$. Then it is easily
checked that
\begin{equation}
\tilde\p\tilde c=[\p c]_{\sim}=\widetilde{\p c},
\end{equation}
where $\tilde \p$ is the differential in the chain complex
$C_r(\tilde G)$. That is, (7.1) can serve as a definition of
$\tilde\p$.

In order to compute $H_r(\H_c(\R^n)^{\sim})$ we fix notation. Let
$c=\sum k_j(g_{1j},\ldots ,g_{rj})$, where $k_j\in \Z$, be a chain
from $C_r(\P\H_c(\mathbb{R}^n))$. We define the support of $c$ by
$$  \supp(c):=\, \bigcup _{i,j} \supp(g_{ij}), $$
where $\supp(g):=\,\bigcup _{t\in I} \supp(g_{t})$,  for $g:I\ni
t\mapsto g_t\in \hrn$. Thus $\supp(c)\subset U$ iff
$\supp(g_{ij})\subset U$ for each $i,j$, or
$(g_{ij})_t\in\H_U(\R^n)$ for each $i,j,t$.

\begin{thm}\label{homos}  Let $G$ be either $\hrn$, or $\H_c(\R^n_+)$
 (in the cases $\R^n_+$ we assume $n\geq 2$).
For $r\geq 1$ one has $H_r(\tilde G)=0$. In particular, $\tilde G$
is a perfect group.
\end{thm}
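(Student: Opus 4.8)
The plan is to adapt Mather's acyclicity argument (the assertion that the inclusion of a ball-supported subgroup induces the zero map on homology) to the universal covering group $\tilde G$, using the identification of chains in $C_r(\tilde G)$ with chains in $C_r(\P G)$ modulo homotopy provided by (7.1). Concretely, I would fix $r\geq 1$ and a cycle $\tilde z\in Z_r(\tilde G)$, lift it to a chain $c\in C_r(\P G)$ with $\tilde z=\tilde c$, and — invoking Lemma 7.1 to replace every entry of $c$ by a homotopic special path — assume each path occurring in $c$ is special. The argument then splits into a \emph{localization} step, showing that $[\tilde z]$ is a sum of classes each represented by a cycle supported in a single ball (or half-ball), and a \emph{dispersal} step, showing that any such ball-supported class vanishes in $H_r(\tilde G)$.

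For the localization step I would use the continuous factorizability of $\P\hrn$ (the path version of Definition 2.1, valid by Theorem 2.3 and Proposition 5.2). Given a finite chain $c$, its support $\supp(c)$ is covered by the chosen family of balls; applying the continuous factorization maps $\sigma_i$ together with the commutator (shuffle) identities of Section 4, one rewrites the image of $c$ in $H_r(\tilde G)$ as a sum of classes whose representing cycles are supported in the individual cover elements. Iterating and, for $\R^n$ (resp. $\R^n_+$), engulfing the finitely many relevant cover elements into one large ball (resp. half-ball), this reduces to cycles $\tilde c$ with $c\in C_r(\P\H_B(\R^n))$ for a fixed ball $B$.

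For the dispersal step, choose a ball (or half-ball, for $\R^n_+$ with $n\geq 2$) $U$ with $\cl(B)\s U$, and let $S:\H_B\r\H_U$ and $\varphi\in\H_U$ be as in Lemma 3.1 (resp. Corollary 3.2, which forces the hypothesis $n\geq 2$ in the half-space case). Since $S$ is continuous and $\H_U$ is path connected, $S$ induces a homomorphism $\tilde S:\widetilde{\H_B}\r\tilde G$, and a path from the identity to $\varphi$ gives a class $\tilde\varphi\in\tilde G$. Reading the relation of Lemma 3.1 off on each time level yields the path identity $\P S(\gamma)=\gamma\cdot\bigl(\varphi\,\P S(\gamma)\,\varphi^{-1}\bigr)$, whose two factors have supports in $B$ and in $\bigcup_{k\geq 1}B_k$ respectively and hence commute; passing to homotopy classes this becomes the equality of homomorphisms $\tilde S=\iota\cdot(\conj_{\tilde\varphi}\circ\tilde S)$ on $\widetilde{\H_B}$, where $\iota:\widetilde{\H_B}\r\tilde G$ is induced by the inclusion. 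Because the images of $\iota$ and of $\conj_{\tilde\varphi}\circ\tilde S$ commute, the standard homology product formula gives $\tilde S_*=\iota_*+(\conj_{\tilde\varphi})_*\circ\tilde S_*$ on $H_r$, $r\geq 1$ (cf. \cite{bro}); since inner conjugation acts trivially on homology, $(\conj_{\tilde\varphi})_*=\id$, and therefore $\iota_*=0$. Combined with the localization step this gives $[\tilde z]=0$, so $H_r(\tilde G)=0$ for all $r\geq 1$, and the case $r=1$ yields $\tilde G=[\tilde G,\tilde G]$.

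I expect the localization step to be the main obstacle. Unlike the plain group $\hrn=\varinjlim_B\H_B$, a path in $\hrn$ may have unbounded total support, so one cannot simply pass to a direct limit over balls; the reduction to a single ball must be carried out at the level of cycles using the continuous fragmentation together with a compact-exhaustion argument, and one must verify that this rewriting is a genuine chain-level (homological) identity rather than merely a product decomposition. The dispersal step, by contrast, is essentially the verification sketched above that Mather's relation and the triviality of conjugation on homology survive the passage to homotopy classes; its only delicate points are the well-definedness of $\tilde S$ on $\widetilde{\H_B}$ and the homotopy identifying constant conjugation by $\varphi$ with conjugation by the class $\tilde\varphi$.
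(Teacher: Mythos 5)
Your high-level plan --- reduce to a ball-supported subgroup, then kill it with Mather's infinite-repetition trick --- is the same as the paper's, but both of your steps contain genuine gaps, and in each case the missing ingredient is exactly what the paper supplies.

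First, the localization step. Fragmentation decomposes a path as a \emph{product} of ball-supported paths, but for $r\geq 2$ there is no identity converting a cycle whose entries are such products into a \emph{sum} of classes represented by cycles supported in single balls: the commutator identities of Section 4 are precisely the degree-one (abelianization) statement and have no higher-degree analogue, so this step is not merely ``the main obstacle'' but an approach that fails for $r\geq 2$. The paper does something different and simpler (Lemma 7.3): after using Lemma 7.1 to replace all entries of the cycle $c$ by homotopic \emph{special} paths, it takes the support of $c$ to be compact (this is its answer to the unbounded-support worry you raise), chooses $\bar\varphi\in\P\H_c(\R^n)$ with $\bar\varphi_1(\supp(c))\s B$, and conjugates $c$ by the \emph{square} path $\varphi=\bar\varphi^{\square}$. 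A special path conjugated by a square path is the identity for $t\leq\frac12$ and is supported in $\bar\varphi_1(\supp(c))\s B$ for $t\geq\frac12$, so $\conj_{\varphi}(c)$ is a genuine cycle of $\P\H_B(\R^n)$; and since conjugation induces the identity on homology, it represents the same class. Applied also to bounding chains, the same trick gives \emph{injectivity} of $\iota_*$, so Lemma 7.3 makes $\iota_*$ an isomorphism for a fixed ball $B$ --- and that injectivity is not a luxury, as the next point shows.

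Second, the dispersal step. The ``standard homology product formula'' $(\alpha\cdot\beta)_*=\alpha_*+\beta_*$ for homomorphisms with commuting images is \emph{false} in degrees $r\geq 2$: for $\alpha=\beta=\id$ on $\Z^2$ the product homomorphism is $g\mapsto g^2$, which induces multiplication by $4$ on $H_2(\Z^2)\cong\Z$, whereas $\alpha_*+\beta_*$ is multiplication by $2$. The error consists of the K\"unneth cross terms of $\Delta_*$ lying in $H_p\otimes H_q$ with $0<p,q<r$ (plus Tor terms). The paper eliminates them by induction on $r$: assuming $H_s(\H_B(\R^n)^{\sim})=0$ for $1\leq s\leq r-1$, the K\"unneth formula gives $\Delta_*\{c\}=\{c\}\oplus\{c\}$, hence $\psi_{0*}\{c\}=\iota_*\{c\}+\psi_{1*}\{c\}=\iota_*\{c\}+\psi_{0*}\{c\}$ and $\iota_*\{c\}=0$; and to close the induction --- to pass from $\iota_*\{c\}=0$ to $\{c\}=0$, i.e. to the vanishing of $H_r$ of the \emph{ball} group needed as the hypothesis at the next degree --- one needs precisely the injectivity half of Lemma 7.3. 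Your version, which claims only surjectivity in the localization step and quotes the product formula as a black box, cannot close any such induction as written. (It is repairable: one can induct instead on the statement ``$\iota_*=0$ in all degrees $\leq r$,'' which also kills the cross and Tor terms; but no induction on the degree appears anywhere in your proposal.) So the two ideas you are missing are the conjugation/engulfing argument of Lemma 7.3 in place of fragmentation, and the K\"unneth-plus-induction argument in place of the unqualified product formula.
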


Let $B\s\R^n$ be a ball or $B\s\R^n_+$ be a half-ball. By $\iota
:\H_B(\R^n)^{\sim}\rightarrow \H_c(\R^n)^{\sim}$ we denote the
inclusion, and $\iota _*: H_r(\H_B(\R^n)^{\sim})\rightarrow
H_r(\H_c(\R^n)^{\sim})$ is the corresponding map on the homology
level.

\begin{lem}\label{lem2}
$\iota _*$ is an isomorphism.
\end{lem}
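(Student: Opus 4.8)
The plan is to show that $\iota_*$ is both surjective and injective by working at the level of the chain complex $C_*(\P\hrn)$, using (7.1) together with two facts: every chain has compact support, and inner automorphisms of $\tilde G=\hrn^{\sim}$ act trivially on $H_*(\tilde G)$. The engine of the argument is the following observation. If $\Phi\in\hrn$ and $\gamma\in\P\hrn$ is a path from the identity to $\Phi$, then pointwise conjugation $c_\Phi\colon\alpha\mapsto(t\mapsto\Phi\alpha(t)\Phi^{-1})$ on $\P\hrn$ descends to an automorphism $c_\Phi^{\sim}$ of $\tilde G$ that coincides with the inner automorphism $\conj_{\tilde\gamma}$: the homotopy rel endpoints between $t\mapsto\gamma(t)\alpha(t)\gamma(t)^{-1}$ and $t\mapsto\Phi\alpha(t)\Phi^{-1}$ is $H(t,s)=\gamma((1-s)t+s)\,\alpha(t)\,\gamma((1-s)t+s)^{-1}$. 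Hence $c_\Phi^{\sim}$ is trivial on $H_*(\tilde G)$, while it moves supports by $\Phi$, i.e.\ it sends chains supported in a set $A$ to chains supported in $\Phi(A)$.

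For surjectivity I would take a class $z\in H_r(\tilde G)$, represent it by a cycle $\tilde c$ with $c=\sum k_j(g_{1j},\dots,g_{rj})\in C_r(\P\hrn)$, and note that $\supp(c)$ is compact, hence contained in some ball $B'$. Choosing $\Phi\in\hrn$ with $\cl(\Phi(B'))\s B$ (possible since $\hrn$ is transitively inclusive) and applying $c_\Phi$ pointwise, I obtain a cycle $c_\Phi(\tilde c)\in C_r(\H_B(\R^n)^{\sim})$ whose $\iota$-image represents $(c_\Phi^{\sim})_*z=z$. Thus $z$ lies in the image of $\iota_*$.

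For injectivity the same compactness reduction applies: if a cycle $z$ supported in $B$ satisfies $\iota_*z=0$, then $z=\tilde\p\tilde b$ for a chain $b$ supported in some ball $B'\supset B$. Here the naive shrinking is insufficient, since pushing $b$ into $B$ only shows that a certain self-map of $H_r(\H_B(\R^n)^{\sim})$ kills $z$, and that self-map factors through the inclusion of a smaller ball and so is itself null — it gives no information. This is the main obstacle and the heart of the statement. To overcome it I would invoke the Eilenberg--Mather swindle furnished by Lemma 3.1: fix inside a ball a pairwise disjoint, locally finite family $B=B_0,B_1,\dots$ accumulating at a point, a shift $\phi$ with $\phi(B_k)=B_{k+1}$, and the homomorphism $S$ with the self-similarity $S(h)=h\cdot\phi S(h)\phi^{-1}$. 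The two factors have disjoint supports, so their images commute; lifting $S$ pointwise to $\tilde S$ on path classes and using both the standard additivity of homology under products of commuting homomorphisms and the triviality of $\conj_{\tilde\phi}$, I would get $\tilde S_*=\iota_*+\tilde S_*$, whence $\iota_*=0$. Together with surjectivity this forces $H_r(\tilde G)=0$, and since $\H_B(\R^n)\cong\hrn$ gives $H_r(\H_B(\R^n)^{\sim})\cong H_r(\tilde G)=0$ as well, $\iota_*$ is the isomorphism $0\to 0$.

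The step I expect to be delicate is precisely the lift of the swindle to the universal covering group: one must check that $S$ induces a continuous homomorphism $\tilde S$ on path classes, that the infinite product converges and its factors commute as classes of paths, and — most importantly — that pointwise conjugation by $\phi$ is inner in $\tilde G$, so that it is null on homology. This is exactly what the operations $\star,\square$ and the conjugation-invariance of special paths (Lemma \ref{zero}) are designed to control. The half-ball case requires $n\geq 2$, so that the accumulating balls and the shift $\phi$ can be arranged while respecting the boundary, exactly as in Corollary 3.2(1).
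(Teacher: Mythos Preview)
Your surjectivity argument is the same as the paper's. The divergence is in injectivity, and there you have misdiagnosed the obstacle. The paper's direct argument \emph{does} work: once $c\in C_r(\P^{\star}\H_B(\R^n))$ and $c'\in C_{r+1}(\P^{\star}\H_c(\R^n))$ with $\widetilde{\partial c'}=\tilde c$ are fixed, one chooses $\varphi\in\P^{\square}\H_c(\R^n)$ with $\varphi_1(\supp(c'))\subseteq B$ \emph{and} $\varphi_1|_{\supp(c)}=\id$ (this is elementary: $\supp(c)$ is compact in $B$, so a radial compression that is the identity on a slightly smaller ball containing $\supp(c)$ does the job). Then for each entry $g_{ij}$ of $c$ one has $\conj_\varphi(g_{ij})=g_{ij}$ pointwise (trivial on $[0,\tfrac12]$ because $g_{ij}$ is special, and on $[\tfrac12,1]$ because $\varphi_t=\varphi_1$ fixes $\supp(g_{ij})$). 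Hence $\conj_{\tilde\varphi}(\tilde c)=\tilde c$ literally in $C_r(\tilde G)$, and $\tilde c=\tilde\partial\,\widetilde{\conj_\varphi(c')}$ with $\widetilde{\conj_\varphi(c')}\in C_{r+1}(\H_B(\R^n)^{\sim})$. So there is no circularity and no ``self-map that gives no information''; the paper's last displayed equality $\conj_\varphi(c)=c$ is meant exactly in this sense.

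Your alternative route---run the Mather swindle to get $\iota_*=0$, combine with surjectivity to force $H_r(\tilde G)=0$, then deduce $\iota_*\colon 0\to 0$---is valid, but it is not a proof of Lemma~7.3 so much as a proof of Theorem~7.2 with Lemma~7.3 recovered as a corollary. In the paper the logic runs the other way: Lemma~7.3 is proved directly and is then \emph{used} in the proof of Theorem~7.2. Note also that the ``standard additivity'' you invoke, $(\iota\cdot\psi)_*=\iota_*+\psi_*$, is not automatic: it needs the K\"unneth decomposition $H_r(\H_B^{\sim}\times\H_B^{\sim})\cong H_r(\H_B^{\sim})\oplus H_r(\H_B^{\sim})$, which requires the inductive hypothesis $H_s(\H_B^{\sim})=0$ for $1\le s<r$. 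That induction is exactly what the paper carries out in the proof of Theorem~7.2.
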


\begin{proof}
 First we show that $\iota _*$ is surjective.
 Let $h \in H_{r}(\H_c (\R^n)^{\sim})$ and let $h=\tilde c$, where $c=\sum k_j(g_{1j},\ldots ,g_{rj})$
  be a cycle representing $h$.
  According to  Lemma \ref{zero} we can assume that $g_{ij}\in\P^{\star}\H_c(\mathbb{R}^{n})$.
  Then $C=\supp(c)$ is compact. We can find
 $\bar{\varphi} \in \P\H_c(\mathbb{R}^{n})$ such that
 $\bar{\varphi}_1(C) \subseteq B$. Define $\varphi:=\bar\varphi^{\square} \in \P^{\square}\H_c(\mathbb{R}^{n})$.
  Since any conjugation induces the
 identity on homology, $(\conj_{\varphi})_{*}(h)=h$. But, in view of
 (7.1), $(\conj_{\varphi})_{*}(h)$ is represented
 by the cycle $\conj_{\varphi}(c)$. It is easily seen that for
 $0\leq t\leq \frac{1}{2} \quad \conj_{\varphi}(c)_{t}=\id$,
  and for $\frac{1}{2}\leq t\leq 1 \quad \conj_{\varphi}(c)_{t}$ is supported in $B$.
  Hence $\conj_{\varphi}(c)$ is a the cycle representing homology
  $h'$ of the group $\H_B(\R^n)^{\sim}$ such that $\iota_*h'=h$.

In order to show injectivity let $h \in \ker (\iota_{*})$. As
above let $c$ be a cycle from
$\mathcal{P}^{\star}\H_c(\mathbb{R}^{n})$ representing $h$.  Since
$\iota_{*}(h)=0$, there is a cycle $c'\in
C_{r+1}(\P\H_c(\mathbb{R}^{n}))$ such that $\partial{c'}=c$. In
view of Lemma \ref{zero}  we may assume that $c'\in
C_{r+1}(\P^{\star}\H_c(\mathbb{R}^{n}))$. We choose $\varphi \in
\P^{\square}\H_c(\mathbb{R}^{n})$ such that
$\varphi_1(\supp(c'))\subseteq B$. We then have
$\partial(\conj_{\varphi}(c'))=\conj_{\varphi}(\partial
c')=\conj_{\varphi}(c)=c$. This means that $c$ is the boundary of
an element from  $C_{r+1}(\P\H_{B}(\mathbb{R}^{n}))$.
Consequently, $h=0$.
\end{proof}

\noindent \emph{ Proof of Theorem 7.2}. (See also \cite{Mat71}.)
By Lemma 7.3 it suffices to consider $\H_B(\R^n)$ (resp.
$\H_B(\R^n_+)$), where $B\s\R^n$ is a ball (resp. $B\s\R^n_+$ is a
half-ball). As in the proof of Lemma 3.1 we define $B_0=B$ and we
choose a locally finite, pairwise disjoint sequence of balls
(resp. half-balls) $(B_k)_{k=0}^{\infty}$ converging to a point
$p\in\R^n$ (resp. $p\in\p_{\R^n_+}$). We also choose an isotopy
$\phi\in\P^{\square}\H_c(\R^n)$ (resp.
$\phi\in\P^{\square}\H_c(\R^n_+)$) with $\phi_1(B_k)=B_{k+1}$ for
$k=0,1,\ldots$.

 Now define
$\psi_{i}:\P^{\star}\H_{B}(\mathbb{R}^{n})\rightarrow
\P^{\star}\H_c(\mathbb{R}^{n})$ for $i=0,1$ as follows: for each
class $g \in \P^{\star}\H_B(\mathbb{R}^{n})$ for each $t \in
[0,1]$ we put:

\begin{equation}\nonumber
 \psi_{i}(g)(x)=
   \left\{
      \begin{array}{lcl}
           \varphi^{j}g\varphi^{-j}(x) & for & x \in    \bar{\varphi}^{j}(\overline{B}), j\geq i\\

           x& for & x \notin \bigcup_{j\geq i} \bar{\varphi}^{j}(\overline{B})
      \end{array}
   \right.
\end{equation}

It is obvious that $\psi_{0}$ and $\psi_{1}$ are conjugate so
$(\psi_{0})_{*}=(\psi_{1})_{*}$. Now define
$\eta:\P^{\star}\H_{B}(\mathbb{R}^{n})\times
\P^{\star}\H_{B}(\mathbb{R}^{n})\rightarrow
\P^{\star}\H(\mathbb{R}^{n})$ as
$\eta(g,h)(t)=g_{t}\psi_{1}(h_{t})$.
 It is easy to prove that $\eta$ induces $\bar\eta:
 \H_{B}(\mathbb{R}^{n})^{\sim}\times \H_{B}(\mathbb{R}^{n})^{\sim}\rightarrow \H(\mathbb{R}^{n})^{\sim}$
  since if $g\sim \bar{g}$ and $h \sim \bar{h}$ then also $\eta(g,h) \sim \eta(\bar{g},\bar{h})$.
Let $\Delta:\H_B(\mathbb{R}^{n})^{\sim}\rightarrow
\H_B(\mathbb{R}^{n})^{\sim} \times \H_B(\mathbb{R}^{n})^{\sim}$ be
the diagonal map. Then \begin{equation}\psi_{0}=\eta \Delta.
\end{equation} Now we proceed by the induction on $r$. For $r=0$ the
assertion is trivial. For the inductive step we may assume that
$H_s(\H_B(\R^n)^{\sim})=0$ for $1\leq s \leq r-1$. Then by the
Kunneth formula we
 get
\begin{equation}  H_r(\H_B(\R^n)^{\sim}\times \H_B(\R^n)^{\sim})\,
=\, H_r(\H_B(\R^n)^{\sim}) \oplus H_r(\H_B(\R^n)^{\sim}).
\end{equation}

Now choose arbitrarily $\{c\}\in H_r(\H_B(\R^n)^{\sim})$. Then
$\Delta _*\{c\}= \{c\}\oplus \{c\}$ by (7.3). It follows by (7.2)
and (7.3) that \[ \psi _{0*}\{c\}=\eta _*\Delta _*\{c\}=\iota
_*\{c\}+\psi _{1*}\{c\} =\iota _*\{c\}+\psi _{0*}\{c\}.\] Thus
$\iota _*\{c\}=0$, and $\{c\}=0$ by Lemma 7.3, as required. In the
case of $\H_c(\R^n_+)$ the proof is the same. \quad $\square$

\medskip

\noindent\emph{Proof of Theorem 1.6}. The second claim coincides
with Theorem 7.2. The first claim is a consequence of Prop. 5.2
for $\P\H_c(M)$, and of the second claim.  \quad$\square$
\begin{thm}
Let $B$ be a ball or a half-ball in $M$ (in the latter case we
assume $n\geq 2$). Then $\H_B(M)^{\sim}$ is bounded.
\end{thm}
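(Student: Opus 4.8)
The plan is to transplant the Burago--Ivanov--Polterovich displacement argument (Theorem 5.6) to the covering group, in the same spirit as the proof of Theorem 6.1 for $\H_B(M)$ itself; the one genuinely new point is the bookkeeping with special and square paths that makes the displacement survive in the universal cover. By the Alexander trick $\H_B(M)$ is isomorphic to $\hrn$ (when $B$ is a ball) or to $\H_c(\R^n_+)$ (when $B$ is a half-ball and $n\geq2$), so $G:=\H_B(M)^{\sim}$ is isomorphic to $\hrn^{\sim}$, resp. $\H_c(\R^n_+)^{\sim}$, and in particular is perfect by Theorem \ref{homos}. I shall treat the ball case; the half-ball case is identical, $n\geq2$ being exactly what makes both the construction below and the perfectness input of Theorem \ref{homos} available.

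First I fix the data for the displacement. As in the proofs of Lemma 3.1 and Theorem \ref{homos}, choose a ball $U=B_0\s\R^n$, a pairwise disjoint sequence of balls $(B_k)_{k\geq0}$ accumulating at a single point $p$, and a square isotopy $\phi\in\P^{\square}\hrn$, compactly supported, with $\phi_1(B_k)=B_{k+1}$ for all $k$; set $\tilde\phi:=[\phi]\in G$. Let $H\leq G$ be the subgroup of those classes admitting a special representative supported in $U$, that is, the image of $\H_U(\R^n)^{\sim}\cong\hrn^{\sim}$; by Theorem \ref{homos} it is perfect, so $H=[H,H]$. The crux is to verify that $\tilde\phi$ $m$-displaces $H$ for every $m\geq1$. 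If $g\in\P^{\star}\hrn$ is supported in $U$, then $g_t=e$ for $t\leq\frac12$ while $\phi_t=\phi_1$ for $t\geq\frac12$, so $\conj_{\phi^k}(g)$ is again a special path, equal to $e$ for $t\leq\frac12$ and to $\phi_1^{k}g_t\phi_1^{-k}$ for $t\geq\frac12$, hence supported in $\phi_1^{k}(U)=B_k$. Consequently $\tilde\phi^{k}H\tilde\phi^{-k}$ is represented by special paths supported in $B_k$, and since the $B_k$ are pairwise disjoint the conjugates $\tilde\phi^{k}H\tilde\phi^{-k}$, $k=0,1,2,\dots$, pairwise commute. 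This is the step I expect to be the main obstacle: it is precisely where disjointness of the sets $B_k$ must be promoted to disjointness of the supporting tubes of the conjugated isotopies -- something that fails for arbitrary representatives but holds here because conjugation by the square isotopy $\phi^{k}$ keeps a special path trivial on $[0,\frac12]$.

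Finally I reduce an arbitrary $\tilde g\in G$ to $H$ by the pushing trick from the proof of Lemma \ref{lem2}. By Lemma \ref{zero} I may represent $\tilde g$ by a special path $g$, whose support $C$ is compact; choosing an isotopy whose time-one map carries $C$ into $U$ and squaring it off gives $\varphi\in\P^{\square}\hrn$ with $\varphi_t(C)\s U$ for $t\geq\frac12$, and then $\conj_{\varphi}(\tilde g)\in H$ exactly as in the surjectivity argument of Lemma \ref{lem2}. Now let $\nu$ be any conjugation-invariant norm on $G$. Applying Theorem 5.6(2) to the pair $H\leq G$ and the displacing element $\tilde\phi$ gives $\nu(h)\leq14\,\nu(\tilde\phi)$ for all $h\in[H,H]=H$; in particular $\nu(\tilde g)=\nu(\conj_{\varphi}(\tilde g))\leq14\,\nu(\tilde\phi)$. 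Since $\tilde\phi$ is fixed once and for all and only the auxiliary $\varphi$ depends on $\tilde g$, the norm $\nu$ is bounded on all of $G$ by $14\,\nu(\tilde\phi)$. Hence $\H_B(M)^{\sim}$ is bounded.
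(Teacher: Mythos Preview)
Your proof is correct and follows the same core strategy as the paper's: lift the Burago--Ivanov--Polterovich displacement argument to the universal cover using the $\star/\square$ bookkeeping, with perfectness supplied by Theorem~\ref{homos}. There is, however, a mild difference in setup worth noting. The paper carries out the displacement in the ambient group $\H_c(M)^{\sim}$: it takes $f\in\H_c(M)$ as in Prop.~6.2, lifts it to a square isotopy, and observes that this $m$-displaces (the image of) $\H_B(M)^{\sim}$ for all $m$; since every class is already a product of commutators of special paths in $\H_B(M)$ by Theorem~1.6 and Lemma~\ref{zero}, Theorem~5.6(2) yields the bound directly, with no need for the extra conjugation step. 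You instead work intrinsically inside $\H_B(M)^{\sim}\cong\H_c(\R^n)^{\sim}$: you displace a strictly smaller sub-ball $U$ and then use the pushing trick of Lemma~\ref{lem2} to conjugate an arbitrary class into $H$. Your route costs one extra reduction but has the advantage of establishing the boundedness of $\H_B(M)^{\sim}$ as an abstract group, independent of any embedding into $\H_c(M)^{\sim}$; the paper's argument, read literally, bounds it with respect to norms inherited from the ambient cover, which is precisely what is needed for Theorem~1.7.
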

\begin{proof}
 Let $f\in\H_c(M)$ be as in Prop. 6.2. We choose an isotopy
 $f_t\in\P^{\square}\H_c(M)$ joining $f$ with the identity. Next
 we observe that, due to Theorem 1.6 and Lemma 7.1 any class from
 $\H_B(M)^{\sim}$ can be represented as a product of commutators
 of elements from $\P^*\H_B(M)$. The proof is now analogous to
 that of Theorem 6.1.
\end{proof}

\noindent\emph{Proof of Theorem 1.7}. It follows from Prop. 5.2
and Theorem 7.4. \quad$\square$

\end{document}